\newcommand \datum {January 24, 2023}
\numberwithin{equation}{section}
\theoremstyle{plain}
 \newtheorem{theorem}{Theorem}[section]
 \newtheorem{lemma}[theorem]{Lemma}
 \newtheorem{observation}[theorem]{Observation} 
 \newtheorem{corollary}[theorem]{Corollary}
\theoremstyle{definition}
 \newtheorem{definition}[theorem]{Definition}
 \newtheorem{example}[theorem]{Example}
 \newtheorem{remark}[theorem]{Remark}
 \newtheorem{convention}[theorem]{Convention}
\theoremstyle{remark}
 \newtheorem{case}{Case}
\newenvironment{enumeratei}{\begin{enumerate}[\upshape (i)]}%
                            {\end{enumerate}}   
\newcommand \hi[1]{#1^\ast}
\newcommand \lo[1]{#1_\ast}
\newcommand \OT [1] {\textup{OT}(#1)}
\newcommand \LHOT [1] {\textup{LHOT}(#1)}
\newcommand \TopInt [1] {\textup{TopInt}(#1)}
\newcommand \lsupp [1] {\textup{lsupp}(#1)}
\newcommand \rsupp [1] {\textup{rsupp}(#1)}
\newcommand \LBnd [1] {\textup{LBnd}(#1)}
\newcommand \RBnd [1] {\textup{RBnd}(#1)}
\newcommand \Bnd [1] {\textup{Bnd}(#1)}
\newcommand \jsum[1] {\mathrel{+_{#1}}}
\newcommand \brosum[3] {#1_{#2\leftarrow\sharp #3}}
\newcommand \height[1] {\textup{ht}(#1)}
\newcommand \Shk [1]  {\mathsf S_7^{(#1)}}
\newcommand \lcorner [1] {\textup{lc}(#1)}
\newcommand \rcorner [1] {\textup{rc}(#1)}
\newcommand \con  {\textup{con}}
\newcommand \intv [1]{\mathfrak{#1}}
\newcommand \nn{\intv n}
\newcommand \pp{\intv p}
\newcommand \qq{\intv q}
\newcommand \rr{\intv r}
\newcommand \hhl {\intv h_{\textup{left}}}
\newcommand \hhr {\intv h_{\textup{right}}}
\newcommand \Floor [1] {\textup{Floor}(#1)}
\newcommand \LFloor [1] {\textup{LFloor}(#1)}
\newcommand \RFloor [1] {\textup{RFloor}(#1)}
\newcommand \Roof [1] {\textup{Roof}(#1)}
\newcommand \LRoof [1] {\textup{LRoof}(#1)}
\newcommand \RRoof [1] {\textup{RRoof}(#1)}
\newcommand \FullRect [1] {\textup{FullRect}(#1)}
\renewcommand \phi{\varphi}
\newcommand \RR {\mathbb R}
\newcommand\spiralfil{curl}
\newcommand \Nwl[1] {\textup{Nwl}(#1)}
\newcommand \Nel[1] {\textup{Nel}(#1)}
\newcommand \Foot [1] {\textup{Foot}(#1)}
\newcommand \Peak [1] {\textup{Peak}(#1)}
\newcommand \CircR [1] {\textup{CircR}(#1)}
\newcommand \UHCircR [1] {\textup{UHCircR}(#1)}
\newcommand \Jir [1] {\textup{J}(#1)} 
\newcommand \Mir [1] {\textup{M}(#1)}
\newcommand \QQ {\mathbb Q}
\newcommand \Nnul {\mathbb N_0}
\newcommand \Nplu {\mathbb N^+}
\newcommand \Min [1] {\textup{Min}(#1)}
\newcommand \gideal {\mathord{\downarrow_{\textup{g}}}}
\newcommand \gfilter {\mathord{\uparrow_{\textup{g}}}}
\newcommand \Ofilt [1] {\mathcal F_{\textup{ord}}(#1)}
\newcommand \Lfilt [1] {\mathcal F_{\textup{lat}}(#1)}
\newcommand \Oid [1] {\mathcal I_{\textup{ord}}(#1)}
\newcommand \Lid [1] {\mathcal I_{\textup{lat}}(#1)}
\newcommand \ideal {\mathord\downarrow\kern0.5pt }
\newcommand \pideal[1] {\mathord\downarrow_{\kern-1.0pt #1}\kern0.5pt}
\newcommand \pfilter[1] {\mathord\uparrow_{\kern-1.0pt #1}\kern0.5pt}
\newcommand \cideal {\mathord\downarrow^{\kern-2pt\prec}\kern0.5pt }
\newcommand \pcideal[1] {\mathord\downarrow^{\kern-2pt\prec}_{\kern-1.0pt #1}\kern0.5pt}
\newcommand \oideal {\mathord\Downarrow\kern0.5pt }
\newcommand \poideal[1] {\mathord\Downarrow_{\kern-1.0pt #1}\kern0.5pt}
\newcommand \filter[1]{\mathord\uparrow\kern0.5pt  #1}
\newcommand \CONSPS {\textup{Con(\tbf{SPS})}}
\newcommand \tbdia {$\mathcal C_1$}
\newcommand \Enl [1] {\textup{Lit}(#1)}
\newcommand \tEnl  {\textup{Lit}}
\newcommand \LEnl [1] {\textup{LLit}(#1)}
\newcommand \REnl [1] {\textup{RLit}(#1)}
\newcommand \Lamp[1] {\textup{Lamp}(#1)}
\newcommand \rhbody {\boldsymbol\rho_{\textup{Body}}}
\newcommand \rhlrbody {\boldsymbol\rho_{\textup{LRBody}}}
\newcommand \rhfoot {\boldsymbol\rho_{\textup{foot}}}
\newcommand \rhinfoot {\boldsymbol\rho_{\textup{infoot}}}
\newcommand \Body [1] {\textup{Body}(#1)}
\newcommand \defiff {\overset{\textup{def}}{\iff}}
\DeclareMathOperator{\Con}{Con}
\newcommand{\tbf}{\textbf}
\newcommand{\set}[1]{\{#1\}}
\newcommand \possibly [1] {}
\newcommand \red[1]{{\textcolor{red}{#1}\color{black}}}
\newcommand \revised [1] {{\color{black}#1\color{black}}}
\newcommand \semmi[1]{}
\begin{document}

\title[\revised{\tbdia-diagrams of slim rectangular lattices}]
{\revised{\tbdia-diagrams of slim rectangular semimodular lattices permit quotient diagrams}} 

\author[G.\ Cz\'edli]{G\'abor Cz\'edli}
\email{czedli@math.u-szeged.hu}
\urladdr{http://www.math.u-szeged.hu/~czedli/}
\address{University of Szeged, Bolyai Institute. 
Szeged, Aradi v\'ertan\'uk tere 1, HUNGARY 6720}

\begin{abstract} 
\emph{Slim semimodular lattices} (for short, \emph{SPS lattices}) and \emph{slim rectangular lattices} (for short, \emph{SR lattices}) were introduced by 
G.\ Gr\"atzer and E.\ Knapp in 2007 and 2009. These lattices are necessarily finite and planar, and they have been studied in more then four dozen papers since 2007. They are best understood with the help of their \emph{\tbdia-diagrams}, introduced by the author in 2017.
For a  diagram $F$ of a finite lattice $L$ 
and a congruence $\alpha$ of $L$, we define the ``\emph{quotient diagram}'' $F/\alpha$ by taking the maximal elements of the $\alpha$-blocks and preserving their geometric positions. While $F/\alpha$ is not even a Hasse diagram in general, we prove that whenever $L$ is an SR lattice and $F$ is a \tbdia-diagram of $L$, then  $F/\alpha$ is a \tbdia-diagram of  $L/\alpha$, which is an SR lattice or a chain. 
The class of lattices isomorphic to the congruence lattices of SPS lattices is closed under taking
filters. We prove that this class is  closed under 
two more constructions, which are inverses of taking filters in  some sense; one of the two respective proofs relies on an inverse of the quotient diagram construction. 
\end{abstract}

\thanks{This research was supported by the National Research, Development and Innovation Fund of Hungary, under funding scheme K 134851.}

\subjclass {06C10\hfill{\red{\tbf{\datum}}}}

\dedicatory{{Dedicated to my grandchildren, P\'eter, Ad\'elia, Valentina Blanka, and Lili\'ana}}

\keywords{{Slim rectangular lattice, \tbdia-diagram, quotient diagram, slim  semimodular lattice, planar semimodular lattice, multifork, thrusting multiforks, congruence lattice,  join-irreducible element, lamp, order filter, Dioecious Maximal Elements Property}}

\maketitle

\section{Introduction}\label{S:Introduction}

\subsection{Slim planar semimodular lattices in lattice theory}
In 2007,  Gr\"atzer and Knapp \cite{GKn-I} introduced the concept of \emph{slim planar semimodular lattices}, \emph{SPS lattices} for short, as finite planar semimodular lattices without cover-preserving $M_3$-sublattices, where $M_3$ is the five-element modular lattice of length 2. Since 2007, more than four dozen papers have been devoted to or motivated by SPS lattices; see the appendix of 
http://arxiv.org/abs/2107.10202 or, for an updated version, see
\texttt{http://www.math.u-szeged.hu/\textasciitilde{}czedli/m/listak/publ-psml.pdf}. See also
 Section 2 of Cz\'edli and Kurusa \cite{CzGKA19} for the influence of these lattices on other fields of mathematics. Here we only mention that SPS lattices played the main role in Cz\'edli and Schmidt \cite{CzGSch-jordanh}, where we could add a uniqueness part to the 
the (group theoretical) Jordan--H\"older theorem from the nineteenth century; see Jordan \cite{Jordan} and H\"older \cite{Holder}. A large part of the research devoted to SPS lattices aims at the congruence lattices $\Con L$ of these lattices $L$.

\subsection{Our goals}\label{subsect:goals}
For a finite lattice $L$, its planar (Hasse) diagram $L^\sharp$, and a congruence $\alpha$ of $L$, we define the ``quotient diagram'' $L^\sharp/\alpha$ by 
 taking the maximal elements of the blocks of $\alpha$ and  keeping their geometric positions in the plane. Although this ``quotient diagram'' is not even a Hasse diagram of a poset in general, our main theorem asserts that $L^\sharp/\alpha$ is even a \tbdia-diagram (and so a planar lattice diagram) of $L/\alpha$ if $L^\sharp$ is a \tbdia-diagram of an SR lattice $L$\revised{; see Section \ref{sect:mainconc} for definitions}. We recall earlier tools from  \cite{CzGlamps} and slightly develop them further in order to prove the main theorem.

Let $\CONSPS$ denote the class of lattices that are isomorphic to congruence lattices of SPS lattices. We know from Cz\'edli and Gr\"atzer \cite[Theorem 1.2]{CzGGG3p3c} that $\CONSPS$ is closed, say, under taking finite direct products.  
\revised{Furthermore, after observing that $L/\alpha$ above is an SPS-lattice and using Gr\"atzer and Knapp \cite{GKnapp-III}, it is easy to show that  $\CONSPS$ is also closed under taking filters; see Observation \ref{obs:rePrk} for details. }
We are going to use some tools  from  \cite{CzGlamps}  to prove that $\CONSPS$ is closed under two additional constructions; these constructions are, in a sense, inverses of taking filters. 

The connection between the previous two paragraphs is close since the proof for one of the two new constructions mentioned above relies on an inverse of forming quotient diagrams. In fact, it is the study of $\CONSPS$ that led to the concept of quotient diagrams.

\subsection{Outline}\label{subsect:outline}
Section \ref{sect:mainconc} recalls the main concepts and most of the notations. Apart from some  lemmas like Lemmas \ref{lemma:cVcRtGl} and \ref{lemma:mgntBlht}, which could be of separate interest, we formulate the results of the paper  in Section \ref{sect:results}. Some easy results from Section \ref{sect:results} are proved in Section \ref{sect:easy}.
In Section \ref{sect:furthtls}, we recall some tools and concepts from earlier papers, mainly from  \cite{CzGlamps}, where the ``lamp technique'' was introduced. Section \ref{sect:diagrquotient} proves the main result, Theorem \ref{thm:mrKtpnMgsRflpbsmG}, on quotient diagrams. Under the name ``thrusting a lamp'' and (in a particular case) ``thrusting a multifork'', Section \ref{sect:thrust} introduces a  way of constructing a new SR lattice from a given one. 
Finally, using the tools recalled or developed in earlier sections, Section \ref{sect:tmprfs} proves Theorems \ref{thm:stnKzrzmG} and \ref{thm:hbhdlgnGtmDzrd} stating that \CONSPS\ is closed under two new constructions.

\section{Main concepts and notations}\label{sect:mainconc}
\subsection{Slim semimodular and slim rectangular lattices}\label{subsect:spssr}
For a lattice $L$, $\Jir L=(\Jir L;\leq)$ stands for the poset (that is, partially ordered set) of nonzero join-irreducible elements of $L$. The concept of slimness is extended for not necessarily semimodular lattices by Cz\'edli and Schmidt \cite{CzGSch-jordanh} as follows: a \emph{slim  lattice} is a finite  lattice $L$ such that $\Jir L$ is the union of two chains. It was proved in   \cite{CzGSch-jordanh} that slim lattices are planar. This allows us to omit the adjective ``planar'' in most of the rest of the paper. So let us summarize:
\begin{equation}
\text{slim semimodular lattice = slim planar semimodular lattice = SPS lattice.}
\end{equation}

Closely related to slim semimodular lattices, we  recall the definition of slim rectangular lattices, which were introduced by Gr\"atzer and Knapp \cite{GKnapp-III}. For a finite lattice $L$, $\Mir L$ stands for the poset of non-unit meet-irreducible elements of $L$. The elements of $\Jir L\cap\Mir L$ are called \emph{doubly irreducible}. Note that in any planar diagram of a slim semimodular lattice $L$, the doubly irreducible elements are on the boundary of $L$. Two elements of $L$ are \emph{complementary} if their meet is 0 while their join is 1.
By a \emph{slim rectangular lattice} 
\revised{(\emph{SR lattice} for short)} 
we mean a slim semimodular lattice that has exactly two doubly irreducible elements and these two elements are complementary. It is not hard to see and it was observed in Cz\'edli and Schmidt \cite[the paragraph above Theorem 12]{CzgScht-a-visual} that
\begin{equation}\left.
\parbox{10cm}{a semimodular lattice $L$ is a slim rectangular lattice if and only if $\Jir L$ is the union of two chains such that every element in one of these chains is incomparable with all elements of the other chain.}
\,\,\right\}
\label{eq:gnlTbnmSkrFgjvr}
\end{equation}
Since each element of $L$ is the join of some join-irreducible elements, it follows easily from \eqref{eq:gnlTbnmSkrFgjvr} that
\begin{equation}\left.
\parbox{10cm}{a semimodular lattice $L$ is a slim rectangular lattice if and only if there are chains $C$ 
and $C'$ in $L$  such that $\Jir L\subseteq C\cup C'$ and, for all $c\in C$ and $c'\in C'$, $c\wedge c'=0$.}
\,\,\right\}
\label{eq:nlSwhmpglKsztl}
\end{equation}

Slim rectangular lattices  (SR lattices for short) are in connection with slim semimodular lattices (SPS lattices for short) in several ways. Namely,  SR lattices are the ``building stones'' of SPS lattices by Cz\'edli and Schmidt
\cite{CzGSchT--patchwork}, they are extremal SPS lattices by \cite{CzGspsabsretracts}, 
 and they are the key to draw nice and useful diagrams by \cite{CzGdiagrectext}. Furthermore,  the study of congruence lattices of SPS lattices reduces to that of congruence lattices of SR lattices by Gr\"atzer and Knapp \cite{GKnapp-III}, where it is proved that 
\begin{equation}\left.
\parbox{11cm}{whenever $K$ is a slim semimodular lattice with at least three elements, then there is a slim rectangular lattice $K'$ such that $\Con K=\Con{K'}$.}
\,\,\right\}
\label{eq:mzdlmCltNtrGls}
\end{equation} 
Here for $L\in\set{K,K'}$ (and also for any lattice $L$), 
$\Con L=(\Con L;\subseteq)$ denotes the \emph{congruence lattice} of $L$. In 2016, Gr\"atzer \cite[Problem 24.1]{gGCFL2} and \cite{gG-cong-fork-ext} asked for a description of the congruence lattices of slim semimodular lattices. Since these congruence lattices, which are finite distributive lattices, are completely described by their posets  $\Jir{\Con L}$ of join-irreducible elements and these posets are simpler structures than the congruence lattices $\Con L$ in question, it is reasonable to focus on the properties of these posets. Indeed, the known properties of the congruence lattices $\Con L$ of SPS lattices $L$ have been proved and most of them have been formulated in terms of $\Jir{\Con L}$; see \cite{CzG:aNote14},  \cite{CzGlamps}, \cite{CzG-DCElamps},   Cz\'edli and Gr\"atzer \cite{CzGGG3p3c}, and
Gr\"atzer \cite{gG-cong-fork-ext} and \cite{gG-VIII-conSPS}.

\subsection{Diagrams}\label{subsect:diagrs}
Assume that we have a planar diagram of a finite lattice.
Given a usual coordinate system of the plane, the 
lines or line segments parallel to the vectors $(1,1)$ and $(1,-1)$ are said to be of \emph{normal slopes}. So there are two normal slopes. A line or line segment is \emph{precipitous} if it is vertical (that is, parallel to the $y$-axis) or it is parallel to a vector $(1,q)$ with $|q|>1$, where $q$ belongs to $\RR$, the set of real numbers. 
Equivalently, a line (segment) $\ell$ is precipitous if, denoting by $\angle(x,\ell)$ the angle from the $x$-axis to $\ell$, we have that $\pi/4 < \angle(x,\ell)< 3\pi/4$. 
Following  \cite[Definition 2.1]{CzGlamps}, originally \cite{CzGdiagrectext}, we need the following definition.

\begin{definition}\label{def:SRdiagram}
\revised{\textup{(A)}}
A planar diagram of a slim rectangular lattice $L$ is a \emph{\tbdia-diagram} if for any edge \revised{(in other words, prime interval)}{} $[p,q]$  of the diagram,
\begin{itemize}
\item if $p\in \Mir L$ and $p$ is not on the boundary, then $[p,q]$ is a precipitous, and
\item otherwise (that is, when $p\notin \Mir L$ or $p$ is on the boundary of the diagram), $[p,q]$ is of a normal slope.
\end{itemize}
\revised{\textup{(B)}} A diagram of a finite chain is a \emph{\tbdia-diagram} if all its edges are of normal slopes.
\end{definition}

We know from \cite{CzGdiagrectext} that each slim rectangular lattice has a \tbdia-diagram. Except for Figure \ref{fig15}, 
all \revised{\emph{lattice}} diagrams in this paper are \tbdia-diagrams of slim rectangular lattices. In what follows, the following convention is valid even when it is not mentioned again.

\begin{convention}\label{conv:tbdD} 
When we deal with a slim rectangular lattice, a \tbdia-diagram of this lattice is fixed. 
This allows us to use diagram-dependent concepts  and use the same notation for a slim rectangular lattice and its fixed \tbdia-diagram.
Whenever we construct a slim rectangular lattice $L^{(2)}$ from  a slim rectangular lattice $L^{(1)}$, then we also construct (and fix) the \tbdia-diagram of  $L^{(2)}$ from that of  $L^{(1)}$.  Sometimes, in text or notation, we do not make a distinction between a slim rectangular lattice and its \tbdia-diagram.
\end{convention}

Note that, apart from left-right symmetry and scaling, 
the \tbdia-diagram of a slim rectangular lattice is unique; the reader may (but need not) read the details of this uniqueness as a particular case of \cite[Theorem 5.5(iii)]{CzGdiagrectext}. Furthermore, denoting the \emph{left and right boundary chains} of a slim rectangular lattice $L$ by $\LBnd L$ and $\RBnd L$ and using the notation $\Bnd L:=\LBnd L\cup \RBnd L$ for the \emph{boundary} of $L$, none of 
$\Bnd L$ and the set $\set{\LBnd L,\RBnd L}$ depends on the \tbdia-diagram of $L$; see  \cite[Proposition 4.11]{CzGdiagrectext}.

\section{Results}\label{sect:results}

We are going to prove the results presented in this section in the subsequent sections. The following easy observation is known from the folklore; for convenience, we are going to present a short proof of it.

\begin{observation}\label{obs:wzGwk}
The quotient lattice $K/\alpha$ for a semimodular lattice $K$ and $\alpha\in \Con K$ need not be semimodular.
\end{observation}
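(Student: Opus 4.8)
The plan is to exhibit a single concrete example rather than argue abstractly. The natural candidate is the smallest semimodular lattice whose quotient can fail semimodularity, so I would first look for a small semimodular (indeed, if possible, a slim semimodular or slim rectangular) lattice $K$ together with a congruence $\alpha$ whose blocks, when collapsed, produce a covering relation that violates the semimodular (upper covering) law. A good source of such a quotient is $N_5$, the five-element nonmodular (hence nonsemimodular) lattice, or more simply any short lattice that is not semimodular; the idea is to realize it as $K/\alpha$ for a genuinely semimodular $K$.

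Concretely, I would take $K$ to be a well-chosen planar semimodular lattice of length $3$ (for instance, a suitable subdirect-product-like lattice, or an explicit Hasse diagram drawn by hand) and let $\alpha$ be the congruence collapsing a single prime interval $[p,q]$ on one side. Semimodularity is preserved under quotients only under extra hypotheses, and the failure is detected by finding elements $x\prec y$ in $K/\alpha$ (covering) such that for some $z$ with $z\not\geq y$ we have $x\vee z = y\vee z$ but $z$ does not cover $x\wedge z$ appropriately — i.e., $z \vee y$ fails to cover $z$. The verification is then a finite check: list the blocks of $\alpha$, confirm $\alpha$ really is a congruence of $K$ (join- and meet-compatibility on the finitely many pairs), confirm $K$ itself is semimodular (check the covering law on its finitely many covering pairs, or just note it is a known small SPS lattice), and finally read off from the quotient's Hasse diagram a violation of the covering law.

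The main obstacle is simply locating the right small example: one needs $K$ semimodular but with a congruence whose maximal-block-representatives, with their induced order, form a nonsemimodular lattice. This requires that the collapse merges comparable and incomparable elements in a way that shortens one "side" of a diamond-like configuration without shortening the other. I expect the cleanest witness to be of length $3$ with $|K|$ around $7$ to $10$, and $K/\alpha \cong N_5$ or another length-$3$ nonsemimodular lattice; the search is finite and guided by the principle that $\alpha$ should collapse a length-$2$ interval on one chain to a point while leaving a length-$2$ interval elsewhere intact, turning a "balanced" cover structure into an "unbalanced" one.

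Once the example is fixed, the write-up is short: present the diagram of $K$, specify $\alpha$ by its blocks, note (or briefly verify) that $K$ is semimodular and $\alpha\in\Con K$, and then observe that in $K/\alpha$ there is a covering pair together with an element witnessing the failure of the upper covering law, so $K/\alpha$ is not semimodular. No deep machinery is needed — this is purely a matter of producing and checking a finite counterexample, which is why the statement is flagged as folklore.
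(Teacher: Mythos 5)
Your plan cannot succeed: there is no finite counterexample. Observation \ref{obs:djbvmfsHgrG}\eqref{obs:djbvmfsHgrGa} of this very paper (proved via the fact that every congruence of a finite semimodular lattice is cover-preserving, by Cz\'edli and Schmidt \cite{CzGSch-howtderiv}) states that for a \emph{finite} semimodular lattice $L$ and any $\alpha\in\Con L$, the quotient $L/\alpha$ is again semimodular. So your proposed finite search --- a planar semimodular lattice of length $3$ with $7$ to $10$ elements whose quotient is $N_5$ --- is guaranteed to come up empty, no matter how the blocks are chosen. The whole point of pairing Observation \ref{obs:wzGwk} with Observation \ref{obs:djbvmfsHgrG} is that the failure of semimodularity under quotients is an essentially infinite phenomenon.

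The paper's witness is accordingly infinite: $K=\set{o,i}\cup(\set{0,1}\times\QQ)$, ordered so that $o$ and $i$ are the bounds and the two copies of $\QQ$ are incomparable dense chains. This lattice has \emph{no covering pairs at all}, so the upper covering law holds vacuously and $K$ is semimodular. Taking $\alpha$ to split one copy of $\QQ$ at the irrational $\sqrt2$ (a Dedekind cut) while keeping the other copy in a single block yields $K/\alpha\cong N_5$, which is not semimodular. If you want to repair your write-up, you must abandon the finite setting and either reproduce an example of this vacuous type or find another infinite semimodular lattice with a non-cover-preserving congruence; your intuition that the collapse should ``shorten one side without shortening the other'' is the right one, but it can only be realized when $K$ is infinite.
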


As opposed to Observation \ref{obs:wzGwk}, the situation is more pleasant in the finite case.

\begin{observation}\label{obs:djbvmfsHgrG}
 For a  finite lattice $L$ and $\alpha\in\Con L$, the following three assertions hold.
\begin{enumeratei}
\item\label{obs:djbvmfsHgrGa} If $L$ is semimodular, then so is its quotient lattice $L/\alpha$.
\item\label{obs:djbvmfsHgrGb} If $L$ is a slim semimodular lattice, then so is $L/\alpha$.
\item\label{obs:djbvmfsHgrGc} If $L$ is a slim rectangular lattice and $L/\alpha$ is not a chain, then $L/\alpha$ is also a slim rectangular lattice.
\end{enumeratei}
\end{observation}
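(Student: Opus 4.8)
The plan is to treat the three parts in order, with each relying on the previous one. For part \eqref{obs:djbvmfsHgrGa}, I would use the classical fact that a finite lattice is semimodular if and only if it is (upper) semimodular in the covering sense, i.e. whenever $a\wedge b \prec a$ then $b \prec a\vee b$, and moreover that in a finite semimodular lattice the height function is submodular: $\height{a\vee b} + \height{a\wedge b} \le \height a + \height b$. Identifying $L/\alpha$ with the set of maximal block representatives ordered by $\le$, the join in $L/\alpha$ of the $\alpha$-classes of $x$ and $y$ is the $\alpha$-class of $x\vee y$, while the meet is the $\alpha$-class of the \emph{maximum} element of the block of $x\wedge y$ (not $x\wedge y$ itself). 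The key point is that for each block $B$, all maximal elements relevant here coincide with the single top of $B$, and one shows $\height{}$ descends to a submodular function on $L/\alpha$; submodularity of the height plus the fact that covers in $L/\alpha$ still drop the height by exactly one then gives semimodularity. Alternatively, and perhaps more cleanly, one invokes that a finite lattice $L$ is semimodular iff it admits a rank function $r$ with $r(a\vee b)+r(a\wedge b)\le r(a)+r(b)$ and $r$ strictly increasing along covers, and checks that the restriction of $\height{}$ (in $L$) to the block-top representatives does the job for $L/\alpha$.

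For part \eqref{obs:djbvmfsHgrGb}, given \eqref{obs:djbvmfsHgrGa} it remains to see that $L/\alpha$ is slim, i.e. that $\Jir{L/\alpha}$ is the union of two chains. The natural approach is to relate $\Jir{L/\alpha}$ to $\Jir L$: the surjective lattice homomorphism $L \to L/\alpha$ maps join-irreducibles onto a set containing $\Jir{L/\alpha}$ (the preimage argument: if $p/\alpha$ is join-irreducible in $L/\alpha$, then taking a minimal join-irreducible $q\le p$ of $L$ whose image is $p/\alpha$, one checks $q/\alpha = p/\alpha$). Hence $\Jir{L/\alpha}$ is the image of a subset of $\Jir L$; since $\Jir L$ is covered by two chains $C_1\cup C_2$ and the homomorphic image of a chain is a chain, $\Jir{L/\alpha}$ is covered by two chains as well. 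This shows $L/\alpha$ is slim.

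For part \eqref{obs:djbvmfsHgrGc}, I would use the characterization \eqref{eq:nlSwhmpglKsztl}: $L$ slim rectangular means there are chains $C$, $C'$ with $\Jir L\subseteq C\cup C'$ and $c\wedge c'=0$ for all $c\in C$, $c'\in C'$. Pushing $C$ and $C'$ forward to $L/\alpha$ gives chains $\overline C$, $\overline{C'}$ whose union contains $\Jir{L/\alpha}$ (by the part \eqref{obs:djbvmfsHgrGb} argument), and $\overline c \wedge \overline{c'}$ is the image of something $\alpha$-equivalent to $c\wedge c' = 0$, hence equals $0_{L/\alpha}$. By \eqref{eq:nlSwhmpglKsztl} again (using \eqref{obs:djbvmfsHgrGa} to know $L/\alpha$ is semimodular), $L/\alpha$ is slim rectangular \emph{provided} it is not a chain — and that proviso is exactly the hypothesis, so there is nothing more to check. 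The main obstacle I anticipate is being careful about the two different ``meets'' — the meet computed in $L/\alpha$ versus the meet in $L$ of representatives — when verifying semimodularity in \eqref{obs:djbvmfsHgrGa}; once the height-function/rank-function bookkeeping there is set up correctly, parts \eqref{obs:djbvmfsHgrGb} and \eqref{obs:djbvmfsHgrGc} are routine diagram-chasing with homomorphic images of chains.
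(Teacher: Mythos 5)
Your parts (ii) and (iii) are essentially the paper's argument (push the two chains of join-irreducibles forward, note that join-irreducibles of $L/\alpha$ come from join-irreducibles of $L$, and use \eqref{eq:nlSwhmpglKsztl} together with the non-chain hypothesis to guarantee that each image chain still contributes a join-irreducible), and they are fine. The problem is part \eqref{obs:djbvmfsHgrGa}, on which the other two parts lean.

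The height-function transfer you propose does not work: the restriction of $\height{\cdot}$ (computed in $L$) to the block tops is \emph{not} a rank function on $L/\alpha$, because a cover $x/\alpha\prec y/\alpha$ in the quotient need not raise the $L$-height of the top representatives by exactly one. Already for $L$ the three-element chain $0<a<1$ and $\alpha$ collapsing $\{a,1\}$, the quotient is a two-element chain whose cover corresponds to the jump from $\height{0}=0$ to $\height{1}=2$. So the claim ``covers in $L/\alpha$ still drop the height by exactly one'' fails, and with it both variants of your rank-function argument; you would at best prove submodularity of a function that is not a grading of $L/\alpha$, which does not characterize semimodularity. The ingredient you are missing is the lemma (cited in the paper from Cz\'edli and Schmidt \cite{CzGSch-howtderiv}) that every congruence of a finite semimodular lattice is \emph{cover-preserving}: $x\prec y$ implies $x/\alpha=y/\alpha$ or $x/\alpha\prec y/\alpha$. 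With that in hand, the paper argues in the covering sense you mention in your first sentence but do not carry out: given $x/\alpha\prec y/\alpha$ and $x/\alpha<z/\alpha$, one lifts the cover to an actual cover $x\prec y$ in $L$ (take $x$ the top of its block and $y$ minimal in $\filter x\cap(y/\alpha)$; the interval $[x,y]$ is then shown to be prime), applies semimodularity of $L$ to get $x\vee z\preceq y\vee z$, and then uses cover-preservation to push this back down to $L/\alpha$. Either supply that lifting argument and the cover-preserving lemma, or cite them; as written, part (i) has a genuine gap.
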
 

Convention \ref{conv:tbdD} and Observation \ref{obs:djbvmfsHgrG}\eqref{obs:djbvmfsHgrGc} raise the question that how to obtain a \tbdia-diagram of $L/\alpha$ \emph{directly} from that of a slim rectangular lattice $L$ in an easy and \emph{natural way}.  Even though the existence of a \tbdia-diagram of 
$L/\alpha$ follows from  \cite[Theorem 5.5(ii)]{CzGdiagrectext} and Observation \ref{obs:djbvmfsHgrG}, the construction in \cite{CzGdiagrectext} does not benefit from the fixed \tbdia-diagram of $L$ and produces a \tbdia-diagram of $L/\alpha$ that has not  much to do with the \tbdia-diagram of $L$. 
By a \emph{diagram} we mean a Hasse diagram. Sometimes, ``Hasse'' is only added for emphasis.

\begin{definition}\label{def:qtnbdZcmRwhzm}
\textup{(A)} For a  lattice diagram $F$ and a subset $X$ of the vertex set of $F$, the ``\emph{subdiagram}'' (determined by) $X$ consists of the elements of $X$ as vertices (in their original geometric positions) and the straight line segments connecting $u,v\in X$ whenever $u\prec_X v$, that is, whenever $u<v$ in (the lattice determined by) $F$ but there is no $w\in X$ such that $u<w$ and $w<v$ hold in $F$. When we are sure that the  ``subdiagram''  is a  Hasse diagram of $X$ (with the ordering inherited from the lattice represented by $F$), we omit the quotient marks.

\textup{(B)} Let $F$ be a diagram of a finite lattice $L$, and let $\alpha\in\Con L$. The ``\emph{quotient diagram}'' $F/\alpha$ is the subdiagram of $F$ determined by the set of the  largest elements of the $\alpha$-blocks. When $F/\alpha$ is a Hasse diagram of the quotient lattice $L/\alpha$, we usually omit the quotient marks.
\end{definition}

\begin{figure}[ht] \centerline{ \includegraphics[scale=1.0]{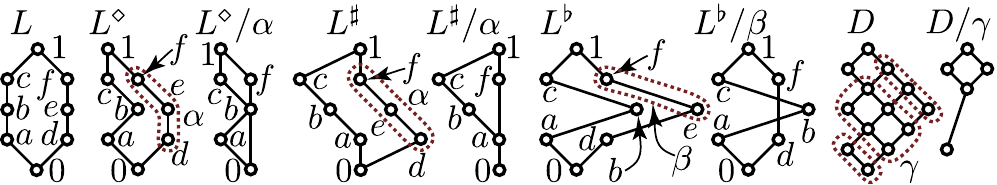}} \caption{Wrong ``quotient diagrams'', $D$, and $D/\gamma$}\label{fig15}
\end{figure} 

\begin{example}\label{example-nPntbd}
In Figure \ref{fig15}, $L$ on the left is also given by three additional planar diagrams, $L^\diamond$, $L^\sharp$, and $L^\flat$. The dotted ovals stand for the 
only non-singleton block of $\alpha$ and that of $\beta$. 
However, $L^\diamond/\alpha$ is \emph{not even a (Hasse) diagram} (because of the triangle $\set{0,a,b}$), $L^\sharp/\alpha$ is a diagram but \emph{not a diagram of} the quotient lattice $L/\alpha$ (because $a/\alpha \nleq f/\alpha$ in $L/\alpha$), and  $L^\flat/\beta$ is a \emph{non-planar} diagram of $L/\beta$. 
In case of $L^\diamond/\alpha$ and $L^\sharp/\alpha$, the problem is caused by an edge that goes through a ``forbidden'' vertex. 
Each of the congruences $\alpha$ and $\beta$ has a  special property since its maximal elements form a sublattice. Hence,  this example is also about ``subdiagrams'', not just ``quotient diagrams''.
\end{example}

\begin{example}\label{example-jnKRd}
In Figure \ref{fig6},  
the diagram of $\Shk 2$ is a \tbdia-diagram but its subdiagram $\Shk 2\setminus\set{\rcorner{\Shk 2}}$, which determines a slim rectangular sublattice, is not a \tbdia-diagram.
\end{example}

A number of proofs in \cite{CzGdiagrectext}, \cite{CzGlamps}, \cite{CzG-DCElamps}, \cite{CzGpropmeet}, \cite{CzGspsabsretracts}, \cite{CzG-lampstwoarx}, and Cz\'edli and Gr\"atzer \cite{CzGGG3p3c} benefit a lot from \tbdia-diagrams explicitly. Implicitly, so  did several papers even before the introduction of this concept.  Now, in view of
 Example \ref{example-nPntbd},  the theorem below reveals a new advantageous and special feature of \tbdia-diagrams.

\begin{theorem}\label{thm:mrKtpnMgsRflpbsmG} If $F$ is a \tbdia-diagram of a slim rectangular lattice $L$, then the quotient diagram $F/\alpha$ defined in Definition \ref{def:qtnbdZcmRwhzm} is a \tbdia-diagram of $L/\alpha$.
\end{theorem}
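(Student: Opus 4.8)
The plan is to reduce the statement to two separate verifications: first, that $F/\alpha$ is a genuine Hasse diagram of $L/\alpha$ (in particular, planar, and with no edge passing through a ``forbidden'' vertex in the sense of Example \ref{example-nPntbd}); and second, that the edges of $F/\alpha$ obey the slope constraints of Definition \ref{def:SRdiagram}. By Observation \ref{obs:djbvmfsHgrG}\eqref{obs:djbvmfsHgrGc} we already know $L/\alpha$ is either a chain or an SR lattice, so the target class of diagrams is the right one; what remains is to see that the concrete geometric object $F/\alpha$ lands in it.

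First I would set up notation for the canonical representatives: for $x\in L$ write $\hi x$ for the largest element of the block $x/\alpha$, and recall the standard fact that $x\mapsto \hi x$ is the retraction realizing $L/\alpha$ as the subposet $\{\hi x : x\in L\}$ of $L$, with $\hi x \vee \hi y$ and $\widehat{\hi x\wedge \hi y}$ computing the operations of $L/\alpha$. The key structural input I expect to need is a description of which join-irreducibles of $L$ survive (or rather, which congruence classes are nontrivial) in terms of the lamp technique of \cite{CzGlamps}: a congruence $\alpha$ of an SPS (hence SR) lattice is determined by a set of lamps, and the collapsed prime intervals are exactly those ``illuminated'' by the chosen lamps. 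I would use this to argue that the set of top-representatives $\hi L$ is closed upward and that the covering relation $\prec$ in $\hi L$ is obtained from $\prec$ in $L$ by contracting exactly the collapsed edges — this gives a clean handle on which segments of $F$ become edges of $F/\alpha$.

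The heart of the argument — and the step I expect to be the main obstacle — is ruling out the two pathologies of Example \ref{example-nPntbd}: (a) an edge of $F/\alpha$ that is not a straight segment of $F$ but must be drawn as a new segment possibly crossing other vertices or edges, and (b) loss of planarity. Here the special geometry of \tbdia-diagrams should do the work. A prime interval $[p,q]$ of $L$ collapses under $\alpha$ iff, in the lamp picture, $p$ and $q$ lie in the same block; because blocks of a congruence of an SR lattice are intervals with a very rigid shape (controlled by the meet-irreducible ``interior'' elements and the lamps), I expect to show that each block $B=x/\alpha$ has $\hi x$ as its top and that the segment of $F$ from any lower neighbour of $B$ up to $\hi x$, followed by possibly a precipitous interior piece, is \emph{already} a straight segment of normal slope or precipitous slope — i.e., the contraction never forces a bent or re-routed edge. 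Concretely: if $\hi x \succ_{\hi L} \hi y$, I want to exhibit an element $z\in y/\alpha$ with $z\prec q$ for some $q\in x/\alpha$ in $L$, and then use the \tbdia slope discipline at the edge $[z,q]$ together with the fact that interior meet-irreducible chains are drawn precipitously and on a single line, to conclude the induced segment from $\hi y$ to $\hi x$ is a legitimate \tbdia-edge. The case analysis splits according to whether $\hi x$ is on $\Bnd L$ or interior, and whether $\hi y/\alpha$ reaches $\hi x$ through a boundary edge or an interior (precipitous) edge; the boundary-preservation statement from \cite[Proposition 4.11]{CzGdiagrectext} and Observation \ref{obs:djbvmfsHgrG} ensure that the boundary of $L/\alpha$ is the image of the boundary of $L$, so ``$p\in\Mir{L/\alpha}$ and interior'' is matched correctly by ``$p\in\Mir L$ and interior'' after collapsing.

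Finally, with planarity and the edge-routing handled, I would verify the slope conditions by transporting them edge-by-edge: every edge of $F/\alpha$ is (the straight continuation of) an edge of $F$ issuing from its bottom endpoint $\hi y$, and $\hi y$ is interior-meet-irreducible in $L/\alpha$ exactly when the corresponding $z$ is interior-meet-irreducible in $L$, so normal slopes map to normal slopes and precipitous to precipitous. The chain case of Definition \ref{def:SRdiagram}(B) is immediate once one observes that if $L/\alpha$ is a chain then all its edges come from boundary edges of $F$, which are of normal slope. I would close by remarking that the same argument shows $F/\alpha$ coincides, up to the left-right/scaling ambiguity, with the unique \tbdia-diagram of $L/\alpha$, so nothing is lost by this explicit construction.
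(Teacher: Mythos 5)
There is a genuine gap, and it is the central one. Your plan treats a general congruence $\alpha$ in one shot and rests on the claim that each edge of $F/\alpha$ is ``already a straight segment'' of $F$, or a straight continuation of an edge of $F$ issuing from its bottom endpoint. That premise is false: when the top $\Peak\pp$ of a changing edge $\pp$ is replaced by the largest element $\hi{(\Peak\pp)}$ of its block, the resulting segment $\pp'$ is a genuinely \emph{new} line segment (for a precipitous $\pp$ whose top slides up along a rung, $\pp'$ is precipitous but not collinear with $\pp$). So one cannot ``transport the slope conditions edge-by-edge'' for free; one must prove that the slope \emph{type} is preserved under moving the top endpoint by a normal or precipitous vector (this is \eqref{eq:mkrThvRsl} and \eqref{eq:gzlSmskLdlbRdlth} in the paper), and then separately re-verify, for the \emph{new} segments, that none passes through a vertex and that no two cross --- which requires the geometric facts \eqref{eq:chVzkRkpmRk} and \eqref{eq:ncghnvmgnBZcfDs} about pegs and circumscribed rectangles and an explicit case analysis (the paper's Cases 1--3 and \eqref{eq:swkHrmM}). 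Your proposal names these pathologies but offers no mechanism to exclude them beyond the incorrect ``already straight'' claim.

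The missing structural idea that makes all of this tractable is the reduction to the case where $\alpha$ is an \emph{atom} of $\Con L$, followed by induction on $\height\alpha$ using the Correspondence and Second Isomorphism Theorems. For an atom $\alpha=\phi(I)$ with $I$ a minimal lamp, Lemmas \ref{lemma:RnlxtG} and \ref{lemma:RnGcN} give a completely explicit description of the non-singleton blocks: the lamp $I$ itself and its rungs, which are chains of normal slopes; only then is the ``rigid shape of the blocks'' you invoke actually available, and only then is the set of changing edges controlled by \eqref{eq:mstrbRdnKcsbp}. For a general $\alpha$ the blocks are unions of such configurations over many lamps, and a direct geometric argument of the kind you sketch would have to control all of them simultaneously --- you give no tool for that. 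You would also need to handle the degenerate cases in which $L/\alpha$ is a chain or $I$ is a corner boundary lamp (the paper's \eqref{eq:mrRgdfZklVs} and \eqref{eq:sMkrshnDkvrMpdlC}), which your outline does not address. In short: the two-part decomposition (Hasse/planarity, then slopes) is sound, but without the atom-plus-induction reduction and the explicit rung/lamp description of the blocks, the heart of the argument is not there.
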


\begin{example} In spite of a result in Gr\"atzer and Knapp \cite{GKnapp-III}, which is cited here in \eqref{eq:mzdlmCltNtrGls},  Theorem \ref{thm:mrKtpnMgsRflpbsmG} does extend to slim semimodular lattices, not even to slim distributive lattices; this is witnessed by $D/\gamma$ in Figure \ref{fig15}.
\end{example}

Recall that  for an element $u$ in a poset (in particular, a lattice)  $P$, the \emph{principal ideal} $\set{x\in P: x\leq u}$ and the \emph{principal filter} $\set{x\in P: x\geq u}$ are denoted by $\ideal u=\pideal P \revised{u}$ and $\filter u=\pfilter P u$, respectively. 
A subset $X$ of $P$ is an \emph{order filter} or, in other words, an \emph{up-set} of $P$ if for every  $u\in X$,  $\pfilter P u\subseteq X$. The poset of order filters of $P$ will be denoted by $\Ofilt P=(\Ofilt P;\subseteq)$. 
Let us emphasize that  $\emptyset\in \Ofilt P$. For a finite lattice $L$,  the \emph{$($lattice$)$ filters} are the same as the  \emph{principal filters}; the poset they form will be denoted by $\Lfilt L=(\Lfilt L;\subseteq)$. 
Even if $L$ is a chain,  $\Ofilt L\neq\Lfilt L$ since $\emptyset\notin\Lfilt L$. For a finite distributive lattice $D$, there is a canonical correspondence between the (lattice) filters of $D$ and the order filters of $\Jir D$. Namely, we have the following observation.

\begin{observation}\label{obs:rdltfLtrs} 
For a finite distributive lattice $D$, 
the poset $(\Lfilt D;\subseteq)$ of the filters of $D$ and the poset $(\Ofilt{\Jir D};\subseteq)$ of the order filters of $\Jir D$ are isomorphic. Namely, the functions
\begin{align*}
\psi&\colon\Lfilt D\to\Ofilt{\Jir D}\text{ defined by }X\mapsto \Jir D\, \setminus \,\pideal D {\textstyle{\mathord{\bigwedge}} X}\,\,\text{ and }
\\
\gamma&\colon\Ofilt{\Jir D}\to\Lfilt D\text{ defined by }
Y\mapsto  \pfilter D{\textstyle{\mathord{\bigvee}}\bigl(\Jir D\setminus Y\bigr)}
\end{align*}
are reciprocal order isomorphisms. Furthermore, for $X\in\Lfilt D$,  $X$ is a lattice of its own right and so $(\Jir X;\leq)$ is a poset, $(\psi(X);\leq)$ (as an order filter of $\Jir D$) is also a poset, and these two posets are isomorphic, that is, 
$(\Jir X;\leq)\cong (\psi(X);\leq)$.
\end{observation}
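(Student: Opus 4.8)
The plan is to deduce everything from Birkhoff's representation theorem, so that the only real work consists in matching the given formulas and in handling the last sentence. Recall three standard facts about a finite distributive lattice $D$: (i) the map $x\mapsto\ideal x\cap\Jir D$ is an order isomorphism of $D$ onto the lattice $\Oid{\Jir D}$ of down-sets of $\Jir D$, with inverse $Z\mapsto\bigvee Z$; (ii) every $x\in D$ satisfies $x=\bigvee(\ideal x\cap\Jir D)$; (iii) every $p\in\Jir D$ is join-prime in $D$, that is, $p\leq x\vee y$ implies $p\leq x$ or $p\leq y$. Besides these, $x\mapsto\pfilter D x$ is an order-reversing bijection of $D$ onto $\Lfilt D$ (since $\pfilter D x\subseteq\pfilter D y\iff y\leq x$), and $Z\mapsto\Jir D\setminus Z$ is an order-reversing bijection between $\Oid{\Jir D}$ and $\Ofilt{\Jir D}$.

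First I would observe that, under these identifications, $\psi$ is nothing but the composite ``take the meet of the filter'' $\to$ ``pass to the down-set of join-irreducibles below it'' $\to$ ``complement in $\Jir D$''. Indeed, $X\in\Lfilt D$ equals $\pfilter D a$ with $a=\bigwedge X$, and $\Jir D\setminus\pideal D a=\Jir D\setminus(\ideal a\cap\Jir D)$ is precisely the complement of the down-set attached to $a$ by (i); in particular $\psi(X)\in\Ofilt{\Jir D}$, so $\psi$ is well defined, and it is order preserving as a composite of an order-reversing, an order-preserving and an order-reversing map. Dually, for $Y\in\Ofilt{\Jir D}$ the set $Z:=\Jir D\setminus Y$ is a down-set, so by (iii) the element $b:=\bigvee Z$ satisfies $\ideal b\cap\Jir D=Z$; hence $\gamma(Y)=\pfilter D b$ is the principal filter corresponding to $Z$, and $\gamma$ is well defined and order preserving for the same formal reason. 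That $\psi$ and $\gamma$ are reciprocal now follows automatically, since along the identifications they become the two mutually inverse complementations, pre- and post-composed with the two mutually inverse Birkhoff bijections; if one prefers a direct check, (ii) gives $\bigvee(\ideal a\cap\Jir D)=a$ and hence $\gamma(\psi(\pfilter D a))=\pfilter D a$, while $\ideal b\cap\Jir D=Z$ gives $\psi(\gamma(Y))=\Jir D\setminus Z=Y$. This settles the assertion that $\psi$ and $\gamma$ are reciprocal order isomorphisms between $\Lfilt D$ and $\Ofilt{\Jir D}$.

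For the last sentence, fix $X\in\Lfilt D$, put $a=\bigwedge X$, so that $X=\pfilter D a=[a,1]$ is again a finite distributive lattice and $\psi(X)=\Jir D\setminus\ideal a$. I would show that $q\mapsto a\vee q$ is an order isomorphism of the subposet $\psi(X)$ of $\Jir D$ onto $\Jir X$. Well-definedness, injectivity and order-reflection all come from (iii): if $q,q'\in\psi(X)$ and $a\vee q\leq a\vee q'$, then $q\leq a\vee q'$, so $q\leq a$ or $q\leq q'$, and $q\not\leq a$ forces $q\leq q'$; in particular $a\vee q=a\vee q'$ yields $q\leq q'\leq q$, and order-preservation is trivial. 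For surjectivity, take $y\in\Jir X$, so $y>a$; writing $y=\bigvee(\ideal y\cap\Jir D)$ by (ii) and absorbing into $a$ those join-irreducibles that lie below $a$, one gets $y=\bigvee\set{a\vee q : q\in\Jir D,\ q\leq y,\ q\not\leq a}$, a join which is nonempty because $y>a$ and all of whose members are $\leq y$; since $y$ is join-irreducible in the interval $[a,1]$, it equals $a\vee q$ for some $q\in\psi(X)$. Thus $q\mapsto a\vee q$ is the desired isomorphism and $(\Jir X;\leq)\cong(\psi(X);\leq)$.

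I expect the last paragraph to be the only part that is not pure bookkeeping: one has to make sure that forming join-irreducibles inside the sublattice $X=[a,1]$ is compatible with the ambient lattice, i.e.\ that the join-irreducibles of $[a,1]$ are exactly the elements $a\vee q$ with $q\in\Jir D$, $q\not\leq a$, and that passing to the interval creates no new ones; this is what the surjectivity argument has to guarantee. The first two paragraphs are routine manipulation of the two standard order-reversing bijections together with Birkhoff duality, and present no difficulty.
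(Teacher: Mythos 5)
Your proposal is correct and follows essentially the same route as the paper: the first part is the composite of the Birkhoff bijection with the two order-reversing complementation/principal-filter bijections, and the last part uses the very same map $q\mapsto a\vee q$ from $\psi(X)$ onto $\Jir X$, with join-primeness of join-irreducibles doing all the work for injectivity, order-reflection, and surjectivity. The only point you leave slightly implicit — that $a\vee q$ really lies in $\Jir X$ — is handled by the same join-primeness argument, so nothing is missing.
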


Since we have not found this easy statement in the literature, we prove it in Section \ref{sect:easy}.
Note that the posets $\Jir X$ and $\psi(X)$ above are different subsets of $D$ in general. 
Unless it is explicitly stated otherwise, a poset is nonempty by definition. 
When we want to allow the empty set as a poset, then we will speak about a ``\emph{possibly empty poset}". Similarly, we can call $\emptyset$ the ``empty poset''. For example, if $L$ is the one-element lattice, then $\Jir L$ is the empty poset.

\begin{definition}\label{def:ssrrepr} \textup{(A)} A finite distributive lattice $D$ is \emph{CSPS-representable} or \emph{CSR-representable} if  $D\cong\Con L$ for some slim semimodular or for some slim rectangular lattice $L$,  respectively.

\textup{(B)} A finite and possibly empty poset $P=(P;\leq)$ is \emph{JCSPS-representable} or \emph{JCSR-representable} if 
$(P;\leq)\cong (\Jir{\Con L};\leq)$ for some slim semimodular or for some slim rectangular lattice $L$,  respectively.
\end{definition}

The adjective ``distributive"   in Part (A) is only for emphasis since, for any lattice $L$,  $\Con L$ is known to be distributive by a 1942 result of Funayama and Nakayama. 
The singleton lattice witnesses that the empty poset is JCSPS-representable. 

Based on Definition \ref{def:ssrrepr}, we have the following statement.

\begin{observation}\label{obs:rePrk} Let $D$ and $P$ be 
a finite distributive lattice and a  finite poset, respectively. 

\textup{(A)} If $D$ is CSPS-representable, then so are its filters and, furthermore, any at least three-element filter of $D$ is CSR-representable.

\textup{(B)}
If $F$ is an order filter of a JCSPS-representable poset, then $F$  is also JCSPS-representable. 
If $F$ is an at least two-element order filter of a JCSR-representable poset, then $F$ is also JCSR-representable.
\end{observation}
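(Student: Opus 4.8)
The plan is to reduce both parts to statements about quotient lattices $L/\alpha$ and then combine Observations~\ref{obs:djbvmfsHgrG} and~\ref{obs:rdltfLtrs} with~\eqref{eq:mzdlmCltNtrGls}; the key points are that every filter of a finite congruence lattice has the form $\Con{L/\alpha}$, that $L/\alpha$ inherits the SPS property, and that the extra cardinality hypotheses are exactly what is needed to force $|L/\alpha|\geq 3$ so that~\eqref{eq:mzdlmCltNtrGls} applies.

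\emph{Part (A).} Suppose $D$ is CSPS-representable; choose an SPS lattice $L$ with $D\cong\Con L$ and let $X$ be a filter of $D$. Since $D$ is finite, $X$ is a principal filter (as recalled before Observation~\ref{obs:rdltfLtrs}), so under the isomorphism $D\cong\Con L$ it corresponds to $\pfilter{\Con L}\alpha$ for some $\alpha\in\Con L$. By the standard isomorphism $\pfilter{\Con L}\alpha\cong\Con{L/\alpha}$ and by Observation~\ref{obs:djbvmfsHgrG}\eqref{obs:djbvmfsHgrGb}, the lattice $L/\alpha$ is SPS; hence $X\cong\Con{L/\alpha}$ is CSPS-representable. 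If in addition $|X|\geq 3$, then $|\Con{L/\alpha}|\geq 3$, which forces $|L/\alpha|\geq 3$ because a lattice with at most two elements has at most two congruences. Now~\eqref{eq:mzdlmCltNtrGls} provides an SR lattice $K'$ with $\Con{K'}=\Con{L/\alpha}\cong X$, so $X$ is CSR-representable.

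\emph{Part (B).} Suppose $P$ is JCSPS-representable; choose an SPS lattice $L$ with $P\cong\Jir{\Con L}$, write $D:=\Con L$, and identify an order filter $F$ of $P$ with the corresponding order filter of $\Jir D$. Put $X:=\gamma(F)\in\Lfilt D$, where $\gamma$ is the isomorphism of Observation~\ref{obs:rdltfLtrs}; that observation yields $\Jir X\cong\psi(X)=\psi(\gamma(F))=F$. Exactly as in Part~(A), $X=\pfilter{\Con L}\alpha\cong\Con{L/\alpha}$ for some $\alpha$, and $L/\alpha$ is SPS, whence $F\cong\Jir X\cong\Jir{\Con{L/\alpha}}$ is JCSPS-representable. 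If $|F|\geq 2$, then $|\Jir{\Con{L/\alpha}}|\geq 2$, so $\Con{L/\alpha}$ has at least three elements and therefore $|L/\alpha|\geq 3$; applying~\eqref{eq:mzdlmCltNtrGls} once more produces an SR lattice $K'$ with $\Jir{\Con{K'}}=\Jir{\Con{L/\alpha}}\cong F$, so $F$ is JCSR-representable.

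\emph{Where the difficulty lies.} Once the cited tools are available the proof is essentially bookkeeping, and the only steps demanding care are the two small cardinality implications: one must check that $|X|\geq 3$ (respectively $|F|\geq 2$) is precisely what guarantees $|L/\alpha|\geq 3$, so that~\eqref{eq:mzdlmCltNtrGls} is applicable. In particular $L/\alpha$ may be a chain, which is SPS but never SR, so Observation~\ref{obs:djbvmfsHgrG}\eqref{obs:djbvmfsHgrGc} by itself does not deliver the ``representable by an SR lattice'' conclusion and the appeal to~\eqref{eq:mzdlmCltNtrGls} is genuinely needed. I would also note that Parts~(A) and~(B) are in fact equivalent via Observation~\ref{obs:rdltfLtrs}: a finite distributive lattice is determined up to isomorphism by its poset of join-irreducibles, and $|X|\geq 3\iff|\Jir X|\geq 2$, so it would suffice to prove either one of them.
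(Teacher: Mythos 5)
Your proof is correct and follows essentially the same route as the paper: the Correspondence Theorem identifies a filter of $\Con L$ with $\Con(L/\alpha)$, Observation~\ref{obs:djbvmfsHgrG} keeps $L/\alpha$ in the SPS class, Observation~\ref{obs:rdltfLtrs} transfers the statement to join-irreducibles, and the cardinality bookkeeping justifies the appeal to~\eqref{eq:mzdlmCltNtrGls}. The only (immaterial) difference is that in Part~(B) you work directly with $L/\alpha$ rather than invoking Part~(A) as a black box, and you spell out the cardinality implications slightly more explicitly than the paper does.
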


This observation implies the following statement trivially.

\begin{corollary}\label{cor:jlMzs} 
Let $F$ be a finite poset,  and let $E$ be a finite distributive lattice.

\textup{(A)} 
 If $F$ is not  JCSPS-representable, then every JCSPS-representable poset $P$ has the property that no order filter of $P$ is order isomorphic to $F$. 

\textup{(B)} If $E$ is not CSPS-representable, then no CSPS-representable finite distributive lattice has a lattice filter isomorphic to $E$.  
\end{corollary}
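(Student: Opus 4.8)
The plan is to derive Corollary \ref{cor:jlMzs} as an immediate logical consequence of Observation \ref{obs:rePrk}; indeed the word ``trivially'' in the statement signals that essentially no new mathematical content is needed, only contraposition of the two closure properties already established. So the proof is short, and there is no serious obstacle; the only care required is to track the size hypotheses ($\geq 2$ elements for order filters, $\geq 3$ elements for lattice filters) that appear in Observation \ref{obs:rePrk}.

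\begin{proof}[Plan of proof of Corollary \ref{cor:jlMzs}]
For part (A), I would argue by contraposition. Suppose, toward a contradiction, that $F$ is not JCSPS-representable but that some JCSPS-representable poset $P$ has an order filter $G$ with $G$ order isomorphic to $F$. If $F$ (equivalently $G$) is empty or a singleton, then $F$ is JCSPS-representable by the remarks following Definition \ref{def:ssrrepr} (the singleton lattice has empty $\Jir{\Con{}}$, and the one-element $\Jir{\Con L}$ is realized as well), contradicting the hypothesis on $F$; hence $G$ has at least two elements, but in fact the first sentence of Observation \ref{obs:rePrk}(B) needs no size restriction at all, so $G$ is JCSPS-representable, whence so is the order-isomorphic poset $F$ --- again a contradiction. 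Therefore no order filter of any JCSPS-representable poset can be order isomorphic to $F$, which is exactly the contrapositive of the desired implication.

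For part (B), the argument is the same, now using Observation \ref{obs:rePrk}(A). Assume $E$ is not CSPS-representable and suppose some CSPS-representable finite distributive lattice $D$ had a lattice filter $X$ with $X\cong E$. By Observation \ref{obs:rePrk}(A), every filter of $D$ is CSPS-representable (this statement, unlike the CSR part, carries no cardinality hypothesis), so $X$ and hence $E$ would be CSPS-representable, a contradiction. Thus no CSPS-representable finite distributive lattice has a lattice filter isomorphic to $E$.
\end{proof}

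I expect the only point worth double-checking while writing this up is whether Observation \ref{obs:rePrk}(B) really does assert JCSPS-representability of \emph{every} order filter (including the empty one) of a JCSPS-representable poset, or only of the nonempty ones; if the latter, the small-cardinality cases must be dispatched separately using the singleton-lattice examples mentioned after Definition \ref{def:ssrrepr}, exactly as sketched above. Either way the argument is a one-line contraposition, and there is no genuine obstacle.
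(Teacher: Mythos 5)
Your argument is correct and is exactly what the paper intends: the paper gives no explicit proof, stating only that Corollary \ref{cor:jlMzs} follows ``trivially'' from Observation \ref{obs:rePrk}, and your contraposition of parts (A) and (B) of that observation is precisely that deduction. Your worry about cardinality is moot, as you yourself note, since the relevant clauses (filters are CSPS-representable; order filters are JCSPS-representable) carry no size hypotheses.
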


Gr\"atzer \cite{gG-VIII-conSPS} proved that the two-element chain as a poset is not JCSPS-rep\-resen\-table. Combining this fact with Corollary \ref{cor:jlMzs}, we immediately obtain a new proof of Corollary 3.5 of Cz\'edli \cite{CzGlamps}, which asserts the following.

\begin{corollary}[Dioecious Maximal Elements Property]\label{cor:Dioecious}
The two-element chain is not an order filter of a  JCSPS-representable poset.
\end{corollary}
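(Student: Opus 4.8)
The plan is to deduce Corollary \ref{cor:Dioecious} directly from Corollary \ref{cor:jlMzs}(A) together with Gr\"atzer's result from \cite{gG-VIII-conSPS}. First I would recall the precise statement being imported: Gr\"atzer proved that the two-element chain, viewed as a poset, is not JCSPS-representable; that is, there is no slim semimodular lattice $L$ with $(\Jir{\Con L};\leq)$ order-isomorphic to the two-element chain. Taking $F$ to be the two-element chain in Corollary \ref{cor:jlMzs}(A), the hypothesis ``$F$ is not JCSPS-representable'' is exactly this fact of Gr\"atzer.

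Next I would spell out what Corollary \ref{cor:jlMzs}(A) then yields: every JCSPS-representable poset $P$ has the property that no order filter of $P$ is order isomorphic to $F$, i.e.\ to the two-element chain. Unwinding the definition of ``JCSPS-representable poset'' (Definition \ref{def:ssrrepr}(B)), this says that for no slim semimodular lattice $L$ does the poset $\Jir{\Con L}$ have an order filter isomorphic to the two-element chain. That is precisely the assertion of Corollary \ref{cor:Dioecious}, since the posets arising as $\Jir{\Con L}$ for slim semimodular $L$ are, by definition, exactly the JCSPS-representable posets.

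The only genuine content is the combination itself, which is essentially a one-line syllogism once the two ingredients are in place; Corollary \ref{cor:jlMzs} is already stated to follow trivially from Observation \ref{obs:rePrk}, which in turn rests on the closure of JCSPS-representability under order filters (part (B) of that observation). So there is no real obstacle here: the substantive work is hidden in Observation \ref{obs:rePrk}(B) (closure under taking order filters) and in Gr\"atzer's theorem \cite{gG-VIII-conSPS}, both of which may be invoked. I would therefore present the proof as: ``By Gr\"atzer \cite{gG-VIII-conSPS}, the two-element chain is not JCSPS-representable. Apply Corollary \ref{cor:jlMzs}(A) with $F$ the two-element chain.'' The phrase ``Dioecious Maximal Elements Property'' and the earlier remark that this gives a new proof of \cite[Corollary 3.5]{CzGlamps} need no further comment.

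\begin{proof}
By Gr\"atzer \cite{gG-VIII-conSPS}, the two-element chain (as a poset) is not JCSPS-representable. Hence Corollary \ref{cor:jlMzs}(A), applied with $F$ the two-element chain, gives that every JCSPS-representable poset $P$ has the property that no order filter of $P$ is order isomorphic to the two-element chain. Since, by Definition \ref{def:ssrrepr}(B), the JCSPS-representable posets are exactly the posets of the form $(\Jir{\Con L};\leq)$ for slim semimodular lattices $L$, this is precisely the assertion that the two-element chain is not an order filter of a JCSPS-representable poset.
\end{proof}
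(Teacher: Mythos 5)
Your proof is correct and coincides with the paper's own argument: the paper likewise obtains the corollary by combining Gr\"atzer's result from \cite{gG-VIII-conSPS} (that the two-element chain is not JCSPS-representable) with Corollary \ref{cor:jlMzs}(A). Nothing is missing.
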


\revised{Combining \eqref{eq:mzdlmCltNtrGls} and Corollary \ref{cor:Dioecious}, we obtain the following remark.
\begin{remark}\label{rem:cskGgJzSfHm}
A possibly empty finite poset $P$ is JCSPS-representable if and only if $|P|\in\set{0,1}$ or $P$ is JCSR-representable. Similarly, a finite poset $Q$ is JCSR-representable if and only if it is JCSPS-representable and $|Q|\geq 2$.
\end{remark}
}

Although several properties of JCSPS-representable \revised{\emph{posets}} are already known, see \cite{CzGlamps}, \cite{CzG-DCElamps}, and Cz\'edli and Gr\"atzer \cite{CzGGG3p3c}, and most of these properties make it easy to find a finite poset $F$ that is not JCSPS-representable, the result obtained from  such an $F$ by Corollary \ref{cor:jlMzs} is already in \cite{CzGlamps}, \cite{CzG-DCElamps} or \cite{CzGGG3p3c}. However, the algorithm given 
Cz\'edli \cite{CzG-lampstwoarx} could lead to new results in the future. 

In spite of many known properties of CSPS-representable  \revised{\emph{lattices}} proved in  \cite{CzG:aNote14}, \cite{CzGlamps}, \cite{CzG-DCElamps}, Cz\'edli and Gr\"atzer \cite{CzGGG3p3c}, and Gr\"atzer \cite{gG-cong-fork-ext} and \cite{gG-VIII-conSPS}, no (finite) characterization of these lattices is known in the language of lattice theory. However, Cz\'edli and Gr\"atzer \cite[Theorem 1.2]{CzGGG3p3c} and  Observation \ref{obs:rePrk} show that the class $\CONSPS$ is closed under some constructions. In order to state, in terms of JCSPS-representability, that this class is closed under two additional constructions,  we give two definitions below and then two theorems based on these definitions.

\begin{figure}[ht] \centerline{ \includegraphics[width=0.85\textwidth]{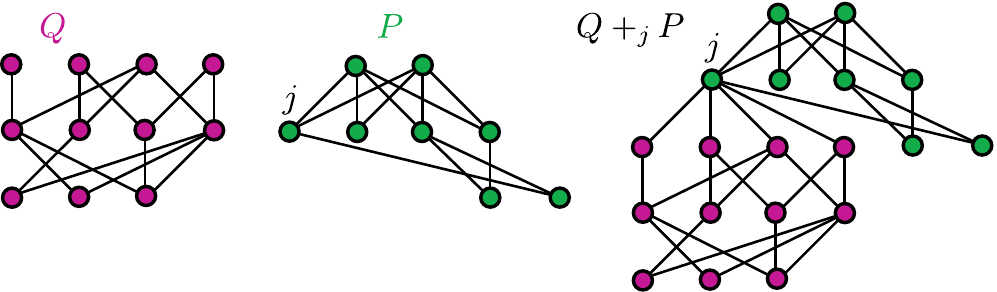}} \caption{\revised{Ordinal sum at an element:} illustrating the $Q\jsum j P$ construct}\label{fig1}
\end{figure}

\begin{definition}[Ordinal sum at an element; see Figure \ref{fig1}]\label{def:kRmvlMgrdh}
Let $P$ and $Q$ be  posets such that  $P\cap Q=\emptyset$, and let  $j\in P$. On the support set $P\cup Q$, we define a poset, denoted by $Q\jsum j P$, by letting  
\begin{equation}
x\leq y\defiff
\begin{cases}
x,y\in P\text{ and }x\leq y\text{ in }P,\text{ or}\cr
x,y\in Q\text{ and }x\leq y\text{ in }Q,\text{ or}\cr
x\in Q,\text{ }y\in P,\text{ and }j\leq y \text{ in }P.
\end{cases}
\end{equation}
\end{definition}

\begin{theorem}\label{thm:stnKzrzmG}
If $P$ and $Q$ are disjoint finite posets, $j\in P$ is not a maximal element of $P$, and both $P$ and $Q$ are JCSR-representable, then $Q\jsum j P$ is also JCSR-representable.
\end{theorem}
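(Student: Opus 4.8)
The plan is to realize $Q\jsum j P$ as $\Jir{\Con K}$ for a suitable SR lattice $K$ built from the SR lattices representing $P$ and $Q$. Fix SR lattices $L_P$ and $L_Q$ with $(\Jir{\Con{L_P}};\leq)\cong P$ and $(\Jir{\Con{L_Q}};\leq)\cong Q$, together with their fixed \tbdia-diagrams. The join-irreducible congruences of a slim semimodular lattice are in bijection with its ``lamps'' (the trajectory/join-irreducible-congruence data of \cite{CzGlamps}), so the poset structure on $\Jir{\Con{L}}$ is recorded combinatorially on the \tbdia-diagram of $L$; this is the machinery I would import from Section~\ref{sect:furthtls}. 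The element $j\in P$ corresponds to a join-irreducible congruence $\con(j)$ of $L_P$, equivalently to a lamp $\ell_j$ of $L_P$; since $j$ is not maximal in $P$, the congruence $\con(j)$ is not a coatom-type (dually-atomic) congruence, i.e.\ the lamp $\ell_j$ is ``dominated'' by at least one other lamp.

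The construction itself should be an instance of ``thrusting'' as developed in Section~\ref{sect:thrust}: I would take the \tbdia-diagram of $L_Q$ and graft it onto the \tbdia-diagram of $L_P$ along a prime interval (edge) of $L_P$ whose associated lamp is exactly $\ell_j$ — roughly, insert a scaled copy of $L_Q$ into a narrow region running through that edge, using precipitous edges where the definition of a \tbdia-diagram demands. By Theorem~\ref{thm:mrKtpnMgsRflpbsmG} and Convention~\ref{conv:tbdD}, this produces a genuine \tbdia-diagram of a slim rectangular lattice $K$ (one must check that grafting an SR lattice into an SR lattice along an edge, with the multifork/lamp bookkeeping, again yields an SR lattice; this is exactly the content I expect Section~\ref{sect:thrust} to provide). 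The effect on join-irreducible congruences is then the expected one: the old lamps of $L_P$ survive with their old order; the new lamps coming from $L_Q$ survive with their old order among themselves; and a new lamp $\ell'$ from $L_Q$ dominates an old lamp $\ell$ of $L_P$ precisely when $\ell$ lies in the filter $\pfilter{P}{j}$, i.e.\ precisely when $j\le$ the element of $P$ corresponding to $\ell$. Comparing this with Definition~\ref{def:kRmvlMgrdh}, the map sending $\Jir{\Con{L_P}}$ identically and $\Jir{\Con{L_Q}}$ to the new lamps is an order isomorphism $\Jir{\Con K}\cong Q\jsum j P$, so $Q\jsum j P$ is JCSR-representable as required.

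The main obstacle is the grafting/thrusting step and its effect on the congruence-generation order: I need an edge of $L_P$ whose lamp is $\ell_j$ (so that thrusting ``through'' it splits exactly the congruence $\con(j)$), I need the inserted copy of $L_Q$ to be positioned so the resulting diagram still satisfies the precipitous-vs-normal-slope dichotomy of Definition~\ref{def:SRdiagram}, and — most delicately — I must verify that no unintended comparabilities among lamps are created (and none destroyed). The hypothesis that $j$ is not maximal in $P$ is presumably what guarantees there is room to do the insertion while keeping the diagram rectangular, i.e.\ that $L_P$ genuinely ``continues above'' the chosen edge; the case analysis will hinge on how $\ell_j$'s trajectory and its dominating lamps sit on the boundary and interior of $L_P$. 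I would carry this out by: (1) recalling the lamp correspondence and the thrusting construction; (2) locating the appropriate edge carrying $\ell_j$; (3) performing the thrust and invoking the SR-preservation and \tbdia-preservation results; (4) computing $\Jir{\Con K}$ lamp-by-lamp and matching it to $Q\jsum j P$.
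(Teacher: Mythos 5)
Your high-level strategy---realizing $Q\jsum j P$ by planting a copy of an SR lattice representing $Q$ inside the diagram of one representing $P$, ``below'' the lamp corresponding to $j$---is indeed the paper's strategy, and your observation that the non-maximality of $j$ forces that lamp to be internal is on target. But two genuine gaps remain. First, the construction you invoke is the wrong one: ``thrusting'' (Section \ref{sect:thrust}) inserts a \emph{single} new lamp atop a given lamp and is the engine of Theorem \ref{thm:hbhdlgnGtmDzrd}, not of this theorem; it cannot graft a whole lattice. What the paper actually does is recreate the boundary lamps $A_1,\dots,A_f$ and $B_1,\dots,B_g$ of the SR lattice $M$ representing $Q$ by inserting forks ``piled up'' in two $4$-cells of $K$ lying under two neon tubes of the lamp $J$ corresponding to $j$, so that the sets $\REnl{A_i}$ and $\LEnl{B_i}$ carve out an $f$-by-$g$ grid, and then replays the entire multifork sequence of $M$ (Lemma \ref{lemma:nmtfrkstZ}) inside that grid.

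Second, and more seriously, you flag but do not resolve the step you yourself call most delicate: that no unintended comparabilities among lamps are created. This is exactly where the paper's key idea lives, and your proposal has no substitute for it. An internal lamp of $K$ may ``use'' a neon tube $\pp$ of $J$ by putting a precipitous edge segment into $\LHOT\pp$, and inserting the copy of $M$ under a used neon tube would entangle the new lamps with old ones. The paper shows that no internal lamp uses three distinct neon tubes of $J$, so at most $2\cdot|\Lamp K|$ neon tubes of $J$ are used; then Lemma \ref{lemma:mgntBlht} inflates the number of neon tubes of $J$ to at least $4(1+2\cdot|\Lamp K|)$ without changing $\Lamp K$, guaranteeing four consecutive \emph{unused} neon tubes under which the insertion is clean. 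Without this counting-and-inflation argument your construction cannot certify the isomorphism $\Lamp L\cong Q\jsum j P$. (A smaller slip: the new lamps end up \emph{below} the old lamps of $\pfilter P j$, not dominating them---in $Q\jsum j P$ every element of $Q$ lies below every $y$ with $j\leq y$.)
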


\begin{figure}[ht] \centerline{ \includegraphics[scale=1.0]{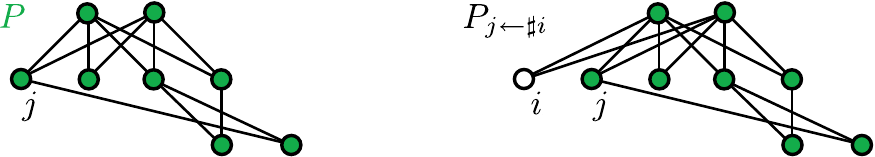}} \caption{\revised{Adding a brother:} illustrating the $\brosum P j i$ construct}\label{fig7}
\end{figure}

\revised{
If $P=\set{j}$ and $|Q|=|P|=1$, then  Corollary \ref{cor:Dioecious} and Remark \ref{rem:cskGgJzSfHm} show that $P$ and $Q$ are JCSPS-representable but $Q\jsum j P$ is not. However, Remark \ref{rem:cskGgJzSfHm} would allow us to reword Theorem  \ref{thm:stnKzrzmG} in terms of JCSPS-representability.}

The nickname of the following definition comes from considering two elements with exactly the same covers (= parents) to be brothers; however, note that  any two maximal (that is, ``parent-less'') elements are  brothers in this terminology.

\begin{definition}[``Adding a brother'']\label{def:kbClbtsmThkm}
For a finite poset $P$ and $j\in P$, let $U_j$ be the set of covers of $j$ in $P$; see on the left of Figure \ref{fig7} for an example with $|U_j|=2$.  Pick an element $i$ outside $P$.
On the support set $P\cup \set i$, we define a poset 
$\brosum P j i$ by  its covering relation
\begin{equation}
x \prec y\defiff
\begin{cases}
x,y\in P\text{ and }x\prec y\text{ in }P,\text{ or}\cr
x=i \text{ and }y\in U_j.
\end{cases}
\end{equation}
\end{definition}

\begin{theorem}\label{thm:hbhdlgnGtmDzrd}
If $P$ is a finite poset, $j\in P$, $i\notin P$, and $P$ is JCSR-representable, then 
$\brosum P ji$ is also JCSR-representable.
\revised{Similarly, if $P$ is JCSPS-representable, then so is $\brosum P ji$.}
\end{theorem}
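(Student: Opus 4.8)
\textbf{Proof proposal for Theorem \ref{thm:hbhdlgnGtmDzrd}.}
The plan is to realize the ``adding a brother'' operation at the level of SR lattices by means of a local lattice-theoretic surgery, and then read off the effect on $\Jir{\Con L}$. Since $P$ is JCSR-representable, fix a slim rectangular lattice $L$ with $(\Jir{\Con L};\leq)\cong P$, and fix a \tbdia-diagram of $L$ as allowed by Convention \ref{conv:tbdD}. Under the lamp correspondence of \cite{CzGlamps}, the elements of $\Jir{\Con L}$ correspond to the lamps of the diagram, so the given $j\in P$ is (the lamp of) some join-irreducible congruence, and $U_j$ is the set of lamps covering it. The goal is to produce a new SR lattice $L'$ whose lamp poset is $\brosum P j i$, i.e.\ $L'$ has one extra join-irreducible congruence $i$ whose set of covers in $\Jir{\Con{L'}}$ is exactly $U_j$ (the ``copy'' of $U_j$) and which is otherwise unrelated to the old congruences. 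The natural mechanism for this is ``thrusting a multifork'' as developed in Section \ref{sect:thrust}: thrusting a lamp into a suitable region of the diagram adds precisely one new join-irreducible congruence while the incidence of all old lamps is preserved, and the new lamp's position in the congruence poset is governed by which lamps it ``shadows''. So the main content is: choose the site of the thrust so that the new lamp is covered exactly by the lamps in $U_j$ and covers nothing, i.e.\ it is inserted ``just below'' the lamps of $U_j$ and incomparable to everything below $j$.

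Here is the order in which I would carry this out. First, recall from Section \ref{sect:furthtls} and \cite{CzGlamps} the precise dictionary between lamps, join-irreducible congruences, and the covering relation in $\Jir{\Con L}$ (one lamp covers another iff the corresponding trajectories/regions are in the appropriate ``immediately smaller'' relation). Second, identify, inside the fixed \tbdia-diagram of $L$, a location associated with $j$ where a multifork can be legally thrust so that the freshly created lamp $\ell_i$ has the property that the lamps strictly above it are exactly the members of $U_j$ (equivalently, $\ell_i$ lies in the intersection of the ``shadows'' of the lamps of $U_j$ but avoids being below any lamp that $j$ already dominates, and avoids being above $j$ itself). Third, invoke the Section \ref{sect:thrust} results to conclude that the resulting diagram is a \tbdia-diagram of a slim rectangular lattice $L'$, that $\Con{L'}$ has exactly one more join-irreducible element than $\Con L$, and that the induced map on the old join-irreducibles is an order embedding with the prescribed new covers. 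Fourth, verify that $(\Jir{\Con{L'}};\leq)\cong\brosum P j i$ by checking the covering relation in Definition \ref{def:kbClbtsmThkm} case by case against the lamp poset of $L'$. For the SPS addendum, if $P$ is JCSPS-representable: if $|P|\in\set{0,1}$ then $\brosum P j i$ is a two-element chain or a two-element antichain, which is JCSPS-representable by inspection of tiny SPS lattices; otherwise $P$ is JCSR-representable by Remark \ref{rem:cskGgJzSfHm}, the SR case just proved gives that $\brosum P j i$ (which has $\geq 3$ elements, hence $\geq 2$) is JCSR-representable, and Remark \ref{rem:cskGgJzSfHm} upgrades this to JCSPS-representability.

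The step I expect to be the main obstacle is the second one: pinning down a site for the thrust that produces \emph{exactly} the covering set $U_j$ and nothing extra. The thrust construction naturally controls which lamps lie ``above'' the new one, but one must ensure (a) the new lamp is genuinely join-irreducible in $\Con{L'}$ and not forced to sit below some old lamp not in $U_j$ (so the thrust must be placed low enough, just beneath the confluence of the $U_j$-lamps but not passing through any region governed by a lamp that $j$ already lies under), and (b) that the geometric/combinatorial hypotheses required to thrust a multifork there are actually met — this is where the \tbdia-discipline, the normal-slope vs.\ precipitous edge conditions of Definition \ref{def:SRdiagram}, and the boundary structure $\Bnd L$ all enter. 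A secondary subtlety is the degenerate cases: $j$ maximal (so $U_j=\emptyset$, and the new lamp must be a new maximal element, i.e.\ a new doubly irreducible congruence — realizable by a trivial ``append a fork on the boundary'' type thrust), and $|U_j|=1$; both should be handled by the same Section \ref{sect:thrust} machinery but warrant an explicit remark. Once the legality of the thrust site is established, the rest is the routine lamp-poset bookkeeping of the fourth step.
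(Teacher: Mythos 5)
Your proposal follows essentially the same route as the paper: represent $P$ as $\Lamp L$ for a slim rectangular lattice $L$, thrust a lamp $I$ atop the lamp $J$ corresponding to $j$ (Definition \ref{def-stznkmjTvngmkZbw}), and read off $\Jir{\Con{L'}}\cong\Lamp{L'}\cong\brosum P j i$, with the SPS addendum handled via Remark \ref{rem:cskGgJzSfHm}. The ``main obstacle'' you flag --- locating a thrust site whose new lamp has covers exactly $U_j$ --- involves no search in the paper: thrusting \emph{atop} $J$ is the canonical site, and Lemma \ref{lemma:szbzhvszPnDb} (via Lemma \ref{lemma:cVcRtGl}, since $\Nwl I=\Nwl J$ and $\Nel I=\Nel J$) shows it automatically produces a new minimal lamp with exactly $J$'s covers; note also that in your degenerate case $|P|=1$ only the two-element antichain can arise (never the two-element chain, which is not JCSPS-representable).
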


Next, we summarize what we know about adding a new minimal element  to a JCSPS-representable poset.  
Later,   \eqref{eq:mlHtntbkNcZcs} and  Lemmas \ref{lemma:mfXzsrjsmG} and \ref{lemma:vltfLm} will indicate that if we want to build a JCSPS representable poset from smaller ones by adding new elements one by one, then it is natural to add new \emph{minimal} elements.

\begin{remark}\label{remark:wWrkfJwzr} 
Assume that $P$ is a JCSPS-representable poset and we intend to define a larger poset $P'$ by adding a new \emph{minimal} element $i$  to $P$. Then $P'$ is fully described by $P$ and the set $U$ of the upper covers of $i$ in $P'$, where $U$ is a possibly empty antichain of $P$. In terms of $P$ and $|U|$, what can we say about the JCSPS-representability of $P'$? We know the following. 

\textup{(a)} If $|U|=0$, then $P'$ is JCSPS-representable by Theorem \ref{thm:hbhdlgnGtmDzrd}. 

\textup{(b)}  If $|U|=1$, then $P'$ is JCSPS-representable since the only element of $\set{j}:=U$ is not a maximal element of $P$ by 
the Dioecious Maximal Elements Property, see \cite[Corollary 3.5]{CzGlamps} or Corollary \ref{cor:Dioecious} here, whereby  the $|Q|=1$ case of Theorem \ref{thm:stnKzrzmG} applies.

\textup{(c)}  For $|U|=2$, too little is known.  If $U$ is also the set of covers of an old element $j\in P$, then the answer is affirmative by Theorem \ref{thm:hbhdlgnGtmDzrd}. However, there are many cases where the answer is negative since the targeted new minimal element would violate known properties of JCSPS-representable posets; see \cite[Lemma 3.9]{CzGlamps}, \cite[Theorem 2.8]{CzG-DCElamps}, and 
Cz\'edli and Gr\"atzer \cite[Main Theorem]{CzGGG3p3c}.  
For example, with  $R$ given in \cite[Figure 9]{CzGlamps},
$P:=R\setminus\set w$ is JCSPS-representable but $P'=P\cup\set w=R$, in which $w$ has exactly two covers,  is not.

\textup{(d)} For $|U|\geq 3$, $P'$ is not JCSPS-representable
by the  Two-Cover Theorem proved in Gr\"atzer \cite{gG-cong-fork-ext} (see also Remark \ref{rem:cSgslVdkRzDrg} here).
\end{remark}

\section{Easy proofs}\label{sect:easy}
In this section, we prove those statements of Section \ref{sect:results} that do not need tools based on lamps.

\begin{proof}[Proof of Observation \ref{obs:wzGwk}]
Let $K=\set{o,i}\cup(\set{0,1}\times \QQ)$ where $\QQ$ is the set of rational numbers. Define the ordering ``$\leq$'' of $K$ so that $o$ and $i$ are the least element and the largest element of $K$, respectively, and for $(i,x),(j,y)\in \set{0,1}\times \QQ$, $(i,x)\leq(j,y)$ if and only if $i=j$ and $x\leq y$.
Since there is no covering pair of elements, $K$ is semimodular. 
However, if $\alpha\in\Con K$ is the congruence with blocks $\set o$, $\set i$, 
$\set{(0,x): \sqrt 2>x\in\QQ}$, $\set{(0,x): \sqrt 2<x\in\QQ}$, and $\set 1\times\QQ$, then $K/\alpha\cong N_5$ is not semimodular. 
\end{proof}

\begin{proof}[Proof of Observation \ref{obs:djbvmfsHgrG}]
Let $L$ be a finite semimodular lattice and $\alpha\in\Con L$. By the \revised{(only)} lemma proved in Cz\'edli and Schmidt \cite{CzGSch-howtderiv}, $\alpha$ is cover-preserving. 
The $\alpha$-block $\set{v\in L: (u,v)\in\alpha}$ of an element $u\in L$ will be denoted by $u/\alpha$.

To prove Part \eqref{obs:djbvmfsHgrGa}, assume that $x/\alpha \prec y/\alpha$ and $x/\alpha <z/\alpha$ in $L/\alpha$. By finiteness, the $\alpha$-blocks are intervals. This allows us to assume that $x$ is the largest element of its $\alpha$-block, $x/\alpha$. 
We can also assume that $x< y$ and $x<z$ since otherwise we can replace $y$ and $z$ by $x\vee y$ and $x\vee z$, respectively. Indeed, $(x\vee y)/\alpha=x/\alpha \vee y/\alpha=y/\alpha$, and similarly for $z$. Next, by modifying $y$ if necessary,
we can assume that $y$ is the smallest element of ${\filter x}\cap(y/\alpha)$. Now let $t\in[x,y]$. Since $x/\alpha\leq t/\alpha \leq y/\alpha$ and $x/\alpha \prec y/\alpha$, it follows that either $t/\alpha=x/\alpha$ or $t/\alpha=y/\alpha$.
If the first alternative holds, then $t\in x/\alpha$ and the maximality of $x$ in its block give that $t=x$. In case of the second alternative, we have that $y\leq t$ since $t\in  \filter x\cap(y/\alpha)$, whereby $t=y$. 
We have shown that $x\prec y$. By the semimodularity of $L$, 
$x\vee z\preceq y\vee z$. Using that $\alpha$ is cover-preserving, we obtain that $x/\alpha\vee z/\alpha =(x\vee z)/\alpha \preceq (y\vee z)/\alpha = y/\alpha\vee z/\alpha$. Thus, $L/\alpha$ is semimodular, proving Part \eqref{obs:djbvmfsHgrGa}.

To prove Part \eqref{obs:djbvmfsHgrGb}, note that $\Jir L=C\cup C'$ with chains $C$ and $C'$ since $L$ is slim. Let $C/\alpha:=\set{x/\alpha: x\in C}$ and 
$C'/\alpha:=\set{y/\alpha: y\in C'}$; they are also chains.  
If $u/\alpha\in L/\alpha$, then  $u\in C\cup C'$ or $u$ is of the form $x\vee y$ with $x\in C$ and $y\in C'$, whereby 
$u/\alpha\in C/\alpha\cup C'/\alpha$ or $u/\alpha=x/\alpha\vee y/\alpha$. This shows that $C/\alpha\cup C'/\alpha$, which is the union of two chains, generates the join-semilattice $(L/\alpha, \vee)$. Thus, $\Jir{L/\alpha}\subseteq C/\alpha\cup C'/\alpha$ shows that $\Jir{L/\alpha}$ is the union of two chains. Hence $L/\alpha$ is slim, proving Part \eqref{obs:djbvmfsHgrGb}.

To prove Part \eqref{obs:djbvmfsHgrGc}, assume that $L$, $C$ and $C'$ are as in \eqref{eq:nlSwhmpglKsztl} and $L/\alpha$ is not a chain.
We know from the previous paragraph that $\Jir{L/\alpha} \subseteq C/\alpha\cup C'/\alpha$. We let $C_1:=\set{c\in C: c/\alpha\in \Jir{L/\alpha}}$ and $C'_1:=\set{c'\in C': c'/\alpha\in \Jir{L/\alpha}}$. Then $C_1/\alpha$ and $C'_1/\alpha$ are chains and  $\Jir{L/\alpha} = {C_1/\alpha}\cup {C'_1/\alpha}$. None of $C_1$ and $C_1'$ is empty since otherwise $L/\alpha$ would be a chain. Take an element of $C_1/\alpha$  and that of $C'_1/\alpha$. These elements are of the form $c/\alpha$ and $c'/\alpha$ with $c\in C_1$ and $c'\in C_1'$, whence  their meet is
$c/\alpha\wedge c'/\alpha=(c\wedge c')/\alpha=0/\alpha$. 
Therefore, $L/\alpha$ is a slim rectangular lattice by \eqref{eq:nlSwhmpglKsztl}. This proves Part \eqref{obs:djbvmfsHgrGc} and completes the proof of Observation \ref{obs:djbvmfsHgrG}.
\end{proof}

\begin{proof}[Proof of Observation \ref{obs:rdltfLtrs}]
For a poset $P$, let $\Oid P$ stand for the set of \emph{order ideals} of $P$. That is, $X\in\Oid P$ if and only if $(\forall u\in X)(\pideal P u\subseteq X)$ and $X\subseteq P$. The set $\set{\pideal D u: u\in D}$ of (lattice) ideals of $D$ will be denoted by $\Lid D$. 
By the well-known structure theorem of finite distributive lattices, see e.g. Gr\"atzer \cite[Theorem 107]{r:Gr-LTFound}, 
the functions
\allowdisplaybreaks{%
\begin{align*}
f_1&\colon \Oid{\Jir D}\to D,\text{ defined by }X\mapsto \bigvee X\text{, and}\cr
f_1^{-1}&\colon D\to \Oid{\Jir D},\text{ defined by }u\mapsto \Jir D\cap\pideal D u
\end{align*}}%
are reciprocal bijections. Consider two \revised{more auxiliary} functions,
\allowdisplaybreaks{%
\begin{align*}
f_2&\colon \Ofilt{\Jir D}\to \Oid{\Jir D},\text{ defined by }X\mapsto \Jir D\setminus X,\text{ and}\cr
f_3&\colon D\to \Lfilt D,\text{ defined by }y\mapsto \pfilter D y.
\end{align*}}%
Clearly, $f_2$ and $f_3$ are functions as specified, and they are bijections with inverses
\allowdisplaybreaks{%
\begin{align*}
f_2^{-1}&\colon \Oid{\Jir D}\to \Ofilt{\Jir D},\text{ defined by }X\mapsto \Jir D\setminus X,\text{ and}\cr
f_3^{-1}&\colon \Lfilt D\to  D,\text{ defined by }Y\mapsto \bigwedge Y.
\end{align*}}%
We compose functions from right to left, that is, $(g_1\circ g_2)(x)=g_1(g_2(x))$. 
Clearly, $\gamma= f_3 \circ f_1 \circ f_2$. Using the identity 
$X\setminus(X\cap Y)=X\setminus Y$, we obtain that $\psi=f_2^{-1} \circ f_1^{-1} \circ f_3^{-1}$. Thus, $\psi$ and $\gamma$ are reciprocal bijections. Since $f_1$ and $f_1^{-1}$ are isotone (order-preserving) while $f_2$, $f_2^{-1}$, $f_3$, $f_3^{-1}$ are antitone (order-reversing), we obtain that $\psi$ and $\gamma$ are order-isomorphisms, as required.

Next, recall from the folklore that for a finite distributive lattice $D$, 
\begin{equation}\left.
\parbox{8.4cm}{if $p\in \Jir D$, $x_1,\dots,x_n\in D$, and $p\leq x_1\vee\dots\vee x_n$, then there is an $i\in\set{1,\dots,n}$ such that $p\leq x_i$.}\,\,
\right\}
\label{eq:nzjlMkgvBfztTn}
\end{equation} 
Indeed, the inequality we assume in \eqref{eq:nzjlMkgvBfztTn} yields that $p=p\wedge(x_1\vee\dots\vee x_n)=(p\wedge x_1)\vee\dots\vee(p\wedge x_n)$, and we obtain an $i$ by the join-irreducibility of $p$. 

To prove the last part of Observation \ref{obs:rdltfLtrs}, assume that $X\in\Lfilt D$. Let $u:=\bigwedge X$ and $Y:=\psi(X)$. Clearly, $X=\pfilter L u$ and $Y=\set{p\in\Jir D: p\nleq u}$. 
We claim that the function $\mu\colon Y\to \Jir X$ defined by $\mu(p):=u\vee p$ is an order isomorphism. 
For $p\in Y$, $\mu(p)$ is clearly in $X$. Furthermore, 
$\mu(p)\neq 0_X=u$ since $p\nleq u$. 
Assume that $a_1,a_2\in X$ such that $\mu(p)=a_1\vee a_2$. 
Then $p\leq \mu(p)=a_1\vee a_2$ and \eqref{eq:nzjlMkgvBfztTn}
give an $i\in\set{1,2}$ with $p\leq a_i$. Hence, 
$\mu(p)=u\vee p\leq u\vee a_i=a_i\leq\mu(p)$ yields that $\mu(p)=a_i$.  Thus, $\mu(p)\in\Jir X$ and $\mu$ is indeed a function from $Y$ to $\Jir X$. 

Next, let $z\in\Jir X$; note that $z>0_X$. As any element of $D$,  $z$ is the join of some elements of $\Jir D$. Since $z \nleq 0_X=u$, there must be joinands outside $\pideal D u$, that is, joinands belonging to $Y$. Hence, there are $p_1,\dots, p_k\in Y$
such that $z=u\vee p_1\vee\dots\vee p_k$ and $k\geq 1$. Thus, $z\in\Jir X$ and 
$z=(u\vee p_1)\vee\dots\vee(u\vee  p_k)$ imply that, for some $i\in\set{1,\dots,k}$, $z=u\vee p_i=\mu(p_i)\in \mu(Y)$, which shows that $\mu$ is surjective.
If $p,q\in Y$ such that $\mu(p)=\mu(q)$, then $p\leq \mu(p)=\mu(q)=u\vee q$ together with $p\nleq u$ and \eqref{eq:nzjlMkgvBfztTn} give that $p\leq q$. Since $q\leq p$ follows similarly, we obtain that $\mu$ is injective. Hence, $\mu$ is a bijection.
Clearly, $\mu$ is order-preserving. For  $p,q\in Y$ such that  $\mu(p)\leq\mu(q)$, we have that 
$p\leq \mu(p)\leq  \mu(q)=u\vee q$,  whereby $p\nleq u$ and \eqref{eq:nzjlMkgvBfztTn} give that $p\leq q$. Hence, the inverse of $\mu$ is also order-preserving. Thus, $\mu$ is an order isomorphism, completing the proof of Observation \ref{obs:rdltfLtrs}
\end{proof}

\begin{proof}[Proof of Observation \ref{obs:rePrk}]
To prove part (A), assume that $F$ is a filter of a CSPS-representable finite distributive lattice $D$. Take a slim semimodular lattice $L$ such that $D\cong \Con L$. 
By the Correspondence Theorem, see e.g. Burris and Sankappanavar \cite[Theorem 6.20]{burrissank}, there is a congruence $\alpha\in\Con L$ such that $F\cong \Con(L/\alpha)$ (as lattices).
But $L/\alpha$ is a slim semimodular lattice by Observation \ref{obs:djbvmfsHgrG}\eqref{obs:djbvmfsHgrGb}. Hence $F$ is CSPS-representable, as required.  Furthermore, if $|F|\geq 3$, then it is CSR-representable by \eqref{eq:mzdlmCltNtrGls}, and we have shown Part (A). 

To prove Part (B),  let $F$ be an order filter of a JCSPS-representable poset. We can assume that $F$ is an order filter of $\Jir{\Con L}$ for a slim semimodular lattice $L$. Applying Observation \ref{obs:rdltfLtrs} to
$D:=\Con L$, we obtain that $E:=\gamma(F)$ is a filter of  $D=\Con L$.  Since $D$ is CSPS represent\revised{able} by its definition,  the already proven Part (A) of Observation \ref{obs:rePrk} yields a slim semimodular lattice $K$ such that $E\cong \Con K$. Hence, using Observation \ref{obs:rdltfLtrs} at $\cong'$ and $='$, we have that
\begin{equation}\Jir{\Con K}\cong\Jir E \cong' \psi(E)=\psi(\gamma (F))=' F.
\label{eq:mkRhHlvlsg}
\end{equation}
Thus, $F$ is JCSPS-representable. If $|F|\geq 2$, then $|\Jir{\Con K}|\geq 2$ by \eqref{eq:mkRhHlvlsg}. Thus,  $|\Con K|\geq 3$.
So, $|K|\geq 3$. By \eqref{eq:mzdlmCltNtrGls}, $\Con K\cong \Con{K'}$ for a slim rectangular lattice $K'$. Combining this isomorphism with \eqref{eq:mkRhHlvlsg}, we have that 
$F\cong\Jir{\Con{K'}}$,  showing the JCSR-representability of $F$ and completing the proof of Observation \ref{obs:rePrk}.
\end{proof}

\begin{figure}[ht] \centerline{ \includegraphics[width=0.95\textwidth]{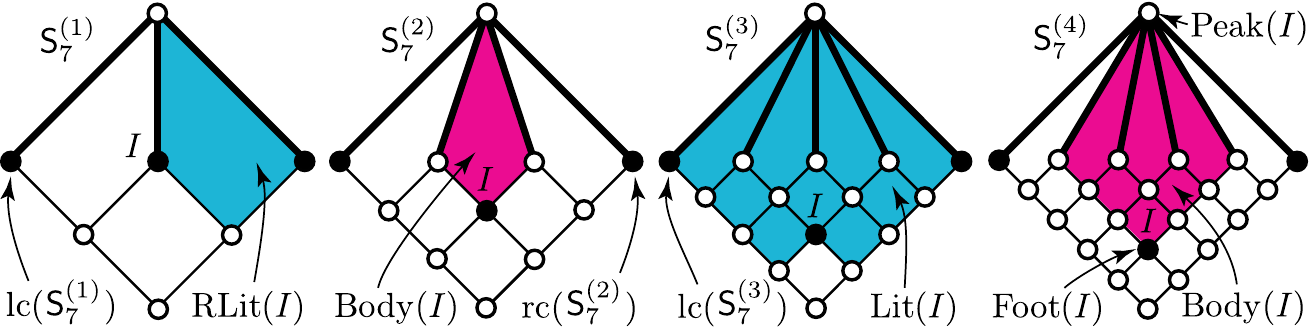}} \caption{$\Shk n$ for $n\in\set{1,2,3,4}$}\label{fig6}
\end{figure}

\begin{figure}[ht] \centerline{ \includegraphics[scale=0.95]{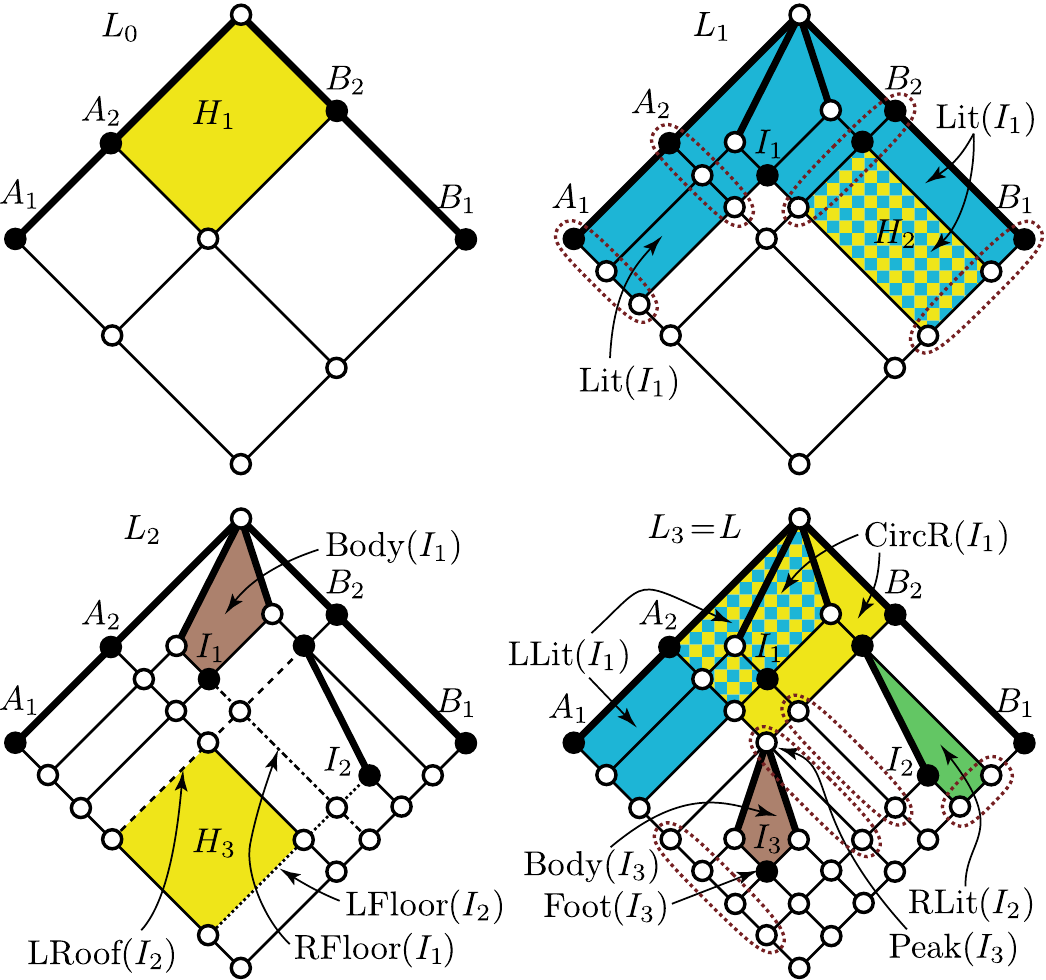}} \caption{A standard multifork sequence and some geometric objects}\label{fig3}
\end{figure}

\section{Further definitions and tools}\label{sect:furthtls}

In this section, we collect several concepts and  earlier results that we need to prove Theorems \ref{thm:mrKtpnMgsRflpbsmG}, \ref{thm:stnKzrzmG}, and \ref{thm:hbhdlgnGtmDzrd}. This ``mini-collection'' as 
well as some statements, like Lemma \ref{lemma:cVcRtGl}, in subsequent sections could be useful in future research.

Let $L$ be a slim rectangular lattice.
The rectangle formed by $\LBnd L$ and $\RBnd L$ is called the \emph{full geometric rectangle} of $L$; it is denoted by $\FullRect L$. It is always a geometric rectangle with sides of normal slopes since a \tbdia-diagram of $L$ is fixed; see Convention \ref{conv:tbdD}. 
An \emph{$L$-compatible line segment} is a geometric line segment that is the union of some edges of the diagram. In particular, every edge is an $L$-compatible line segment.
Let  $a<b$ in $L$. If $C'$ and $C''$ are maximal chains of the interval $[a,b]$, then the collection of lattice elements of $[a,b]$ that are on the right of $C'$ and, simultaneously, on the left of $C''$ 
\begin{equation}
\text{is called a \emph{$($lattice$)$ region} of $L$.}
\label{eq:grnBndmlszGn}
\end{equation}
Note that $C'$ and $C''$ are subsets of this lattice region. This concept is borrowed from Kelly and Rival \cite{KR75}, where it is proved that lattice regions are cover-preserving sublattices and intervals are lattice regions. 
Our definition of a region relies on the fixed \tbdia-diagram of $L$. 
For regions and other concepts that were or will be defined later (like $\Bnd L$, territory, $\Enl I$, etc.), the following convention applies.

\begin{convention}If we define a concept $H$ based on the geometric properties of the \tbdia-diagram of our
slim rectangular lattice $L$, then $H$ has \revised{a} double meaning. First, $H$ is a set of certain elements and edges of $L$. Second, $H$ is a geometric object consisting of line segments \revised{and (usually infinitely many)} geometric points.  When the distinction between these two meanings is important, the context will make it clear which one is understood. In particular, the terms ``lies'' and ``in the plane'' always carry a geometric meaning.
Sometimes, we explicitly say that the meaning is geometric or we use a different notation (like $\Body I$ later) for the geometric meaning. For example, $H$ is understood lattice theoretically in the text ``$x\in L$ is in $H$'' and in Kelly and Rival's previously-mentioned result. On the other hand, $H$ is meant geometrically in the text ``a part of an edge lies in $H$''. 
\end{convention}

Non-chain 4-element lattice regions are  called \emph{$4$-cells}. If all minimal non-chain lattice regions are 4-cells, then $L$ is called a \emph{$4$-cell lattice}. By
Gr\"atzer and Knapp {\cite[Lemmas 4 and 5]{GKn-I} and \cite{GKnapp-III}}, we know that for a planar lattice $L$ (which is finite by definition),
\begin{equation}\left.
\parbox{10.2cm}{if  $L$ is a $4$-cell lattice, no two distinct $4$-cells have the same bottom, $L$ has exactly two doubly irreducible elements, \revised{ and these two elements are complementary,} then $L$ is a slim rectangular lattice.}\,\,\right\}
\label{eq:mnHhwlCh}
\end{equation}
Conversely, a slim rectangular lattice is a 4-cell lattice \revised{with the above properties}.

For an interval $H$, the least element and the largest element of $H$ will be denoted by $\Foot H=0_H$ and $\Peak H=1_H$. 
(We prefer the notation $\Foot H$ and $\Peak H$, introduced in \cite{CzGlamps} for some special intervals since they are more readable then $0_H$ and $1_H$ if $H$ is subscripted like $H_i$.) 
\begin{equation}\left.
\parbox{10cm}{A \emph{territory} of a slim rectangular lattice $L$  with a fixed \tbdia-diagram is   a not necessarily convex polygon together with  its inner points bordered by $L$-compatible line segments.}\,\,\right\}
\label{eq:hsGsmfKrnnSlk} 
\end{equation}
Comparing \eqref{eq:grnBndmlszGn} and \eqref{eq:hsGsmfKrnnSlk}, we can see that  regions  are territories but not conversely. 
Note that an edge in itself is a territory (and also a region), so the area (two-dimensional Lebesgue measure) of a territory can be 0.
\begin{equation}\left.
\parbox{8cm}{By a \emph{normally bordered territory} of a slim rectangular lattice $L$ with a fixed \tbdia-diagram we mean a territory the sides of which are of normal slopes. }
\,\,\right\}
\label{eq:nwiRbzQgbV}
\end{equation}
\begin{equation}\left.
\parbox{8cm}{A \emph{rectangular} interval of $L$ 
is a non-chain interval that is a normally bordered territory as a region.}
\,\,\right\}
\label{eq:nrMBrdrhkVlmRg}
\end{equation}
So a rectangular interval is bordered by four $L$-compatible line segments of normal slopes and positive lengths, and it is also a region. (As opposed to normally bordered territories, a rectangular interval is always of a positive geometric area.)
For example, $L$  is a rectangular interval of itself. 
To see another example, note that   $[\Foot I, \Peak I]$ is not a rectangular interval of $\Shk 4$ in Figure \ref{fig6}  but, being isomorphic to $\Shk 2$, it is a rectangular lattice on its own right even though the subdiagram $[\Foot I, \Peak I]$ is not a \tbdia-diagram. 
\begin{equation}
\text{A \emph{rectangular $4$-cell} is a 4-cell that is a rectangular interval.}
\label{eq:cskPrnsfkRnsj}
\end{equation}
For a rectangular interval $I$ of $L$, the leftmost element and the rightmost element of $I$ are called the \emph{left corner} and the \emph{right corner} of $I$, and  they are denoted by
$\lcorner I$ and $\rcorner I$, respectively. Note that $\lcorner L$ and $\rcorner L$ are the two doubly irreducible elements of $L$.
For $x\in L$, we define the \emph{left support} and the \emph{right support} of $x$ by 
\begin{equation}
\lsupp x:=x\wedge \lcorner L\qquad\text{ and }\qquad 
\rsupp x:=x\wedge \rcorner L.
\label{eq:wKznPrsTsGb}
\end{equation}
Let us emphasize again that the concepts we defined in 
\eqref{eq:grnBndmlszGn},  \eqref{eq:hsGsmfKrnnSlk}, \eqref{eq:nwiRbzQgbV},  \eqref{eq:nrMBrdrhkVlmRg},  \eqref{eq:cskPrnsfkRnsj}, and \eqref{eq:wKznPrsTsGb} depend on the fixed \tbdia-diagram of $L$. 
We know from, say, the main result of Cz\'edli \cite{CzGpropmeet} that for an element $x$ of a slim rectangular lattice $L$,
\begin{equation}\left.
\parbox{10.3cm}{$\lsupp x\in\LBnd L$, $\rsupp x\in\RBnd L$, 
the interval $[\lsupp x,x]$ is a chain with all edges of normal slope $(1,1)$, and  the interval $[\rsupp x,x]$ is a chain with all edges of normal slope $(1,-1)$.}
\,\,\right\}
\label{eq:hwSukthhBf}
\end{equation}

Next, after some additional preparations, we recall the concept of \emph{multifork extensions} or, in other words, \emph{inserting a multifork}  from \cite{CzG:pExttCol}.  A 4-cell $H$ is called \emph{distributive} if the ideal $\ideal{\Peak H}$ is distributive. 
Note that a distributive 4-cell is rectangular; see \eqref{eq:nrMBrdrhkVlmRg}. At present, multifork extensions are only possible at distributive 4-cells. 
(In Section \ref{sect:thrust}, we will relax this assumption.)
The lattices $\Shk k$ for $k\in \Nplu:=\set{1,2,3,\dots}=\revised{\Nnul\setminus\set 0}$ were defined in \cite{CzG:pExttCol}; for $k\leq 4$ they are also given in Figure \ref{fig6}.
Let $H$ be a distributive 4-cell of a slim rectangular lattice $L$.
Before the formal description, note that Figure \ref{fig3} gives three examples. 
Namely,  for $i\in\set{1,2,3}$,  $L_{i}$ is obtained from $L_{i-1}$ by performing a $k_i$-fold multifork extension at $H_i$, where $H_i$ is indicated in the diagram of $L_{i-1}$, $k_1=2$, 
$k_2=1$, and $k_3=2$.

The formal definition runs as follows. Postponing the explanation of certain ingredients of Figure \ref{fig3}, let  $\Peak I$ and $\Foot I$  be the largest element of $\Shk k$ and the meet of the internal  (that is, non-boundary) meet-irreducible elements, respectively. In $L$, replace $H$ by a copy of $\Shk k$, that is, insert the elements of $[\Foot I, \Peak I]\setminus\set{\Peak I,\Foot I,\lcorner I,\rcorner I}$ as new lattice elements in the geometric interior of $H$. Usually, we need to resize and re-scale 
the diagram of $\Shk k$ so that the boundary of $\Shk k$ should be geometrically congruent to that of $H$; we know from the four sentences following (5.8) in  \cite{CzGdiagrectext} that such a resizing and rescaling are always possible. 
In the next step, for each new element $x$, draw a line segment in the direction $(-1,-1)$ down to $\LBnd L$ and a line segment  in the direction $(1,-1)$ down to $\RBnd L$. (Both line segments are of normal slopes.) Wherever a new line segment intersects an edge in an inner (geometric) point or terminates at an inner geometric point of an edge, we add a new lattice element. In this way, as we know from \cite{CzG:pExttCol}, we obtain a \tbdia-diagram of a new slim rectangular lattice $L'$. We say that $L'$ is obtained from $L$ by performing a \emph{$k$-fold multifork extension at the distributive $4$-cell $H$}. By an  \emph{$m$-by-$n$ grid} or (if we do not specify $m$ and $n$) a \emph{grid} we mean the direct product of an $(m+1)$-element chain and an $(n+1)$-element chain where $m,n\in\Nplu:=\set{1,2,3,\dots}$. Note that a grid cannot be a chain.
The importance of multifork extensions will be explained by 
Lemmas \ref{lemma:nmtfrkstZ} and \ref{lemma:mfXzsrjsmG}. The lemma below is 
\cite[Lemma 2.12 and Figure 6]{CzGlamps}, quoted from  \cite[Theorem 3.7]{CzG:pExttCol}.

\begin{lemma}[Multifork Sequence Lemma {\cite[Theorem 3.7]{CzG:pExttCol}}]
\label{lemma:nmtfrkstZ}
For each slim rectangular lattice $L$, there exist \revised{positive} integers $m_1,\dots, m_k$, a sequence  $L_0,L_1,\dots,L_k=L$ of slim rectangular lattices, and a  distributive $4$-cell 
$H_i$ of $L_{i-1}$ for $i\in\set{1,\dots,k}$ such that $L_0$ is a grid, $L_k=L$, and $L_i$ is obtained from $L_{i-1}$ by performing an $m_i$-fold multifork extension at $H_i$. Furthermore, any lattice obtained in this way from a grid is a slim rectangular lattice.
\end{lemma}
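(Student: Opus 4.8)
The plan is to treat the two halves of the lemma separately, since their difficulties are very unequal. The ``furthermore'' half --- that anything built from a grid by $m_i$-fold multifork extensions at distributive $4$-cells is a slim rectangular lattice --- is an easy induction on the number of extensions performed: a grid is a slim rectangular lattice, and the construction recalled just before the lemma already asserts that one $m$-fold multifork extension at a distributive $4$-cell of a slim rectangular lattice yields a slim rectangular lattice together with a \tbdia-diagram of it. So only the existence half requires real work.

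For the existence half I would argue by induction on $|L|$. If $L$ is a grid, take $k=0$. Otherwise the whole point is to exhibit a slim rectangular lattice $L'$ with $|L'|<|L|$, a positive integer $m$, and a distributive $4$-cell $H$ of $L'$ such that $L$ is the $m$-fold multifork extension of $L'$ at $H$; applying the induction hypothesis to $L'$ and then appending the data $(m,H)$ completes the step. Informally, $L'$ is $L$ with ``the last multifork peeled off.'' Since $L$ is not a grid and its \tbdia-diagram is fixed, $L$ has at least one internal (non-boundary) meet-irreducible element; by \eqref{eq:hwSukthhBf} and Definition \ref{def:SRdiagram} the internal meet-irreducibles lie on precipitous edges, and the way these precipitous edges are arranged --- read off from the trajectory structure of the diagram --- records how the multiforks were stacked. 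I would single out a ``topmost, innermost'' inserted copy of $\Shk m$, one whose peak $\Peak I$ has the property that $\ideal{\Peak I}$ becomes distributive once this single copy of $\Shk m$ is collapsed back to a $4$-cell, and then delete the internal elements of that $\Shk m$ together with the extra lattice elements that the construction from \cite{CzG:pExttCol} placed where the normal-slope segments descending from the new elements crossed edges of the old diagram. This leaves a smaller diagram $L'$ in which the old $4$-cell $H$ has reappeared.

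Two verifications then remain. First, $L'$ is a slim rectangular lattice: I would check the hypotheses of the characterization \eqref{eq:mnHhwlCh}, i.e.\ that $L'$ is a $4$-cell lattice, that distinct $4$-cells of $L'$ have distinct bottoms, and that $L'$ has exactly two doubly irreducible elements which are complementary --- the corners $\lcorner L$ and $\rcorner L$ of $L$ survive the peeling and play this role. Second, the peeling really is inverse to an $m$-fold multifork extension at a \emph{distributive} $4$-cell: one verifies that $H$ is a $4$-cell of $L'$ with $\ideal{\Peak H}$ distributive (this is exactly what the ``topmost, innermost'' choice secures, since in $L'$ nothing lies below $\Peak H$ except what was there when $H$ was created), and that re-inserting $\Shk m$ into $H$ and drawing the prescribed descending normal-slope segments restores precisely the deleted elements and edges of $L$.

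The main obstacle is the isolation of a removable multifork inside the inductive step: one must prove that a non-grid slim rectangular lattice always contains a ``last'' multifork configuration, and that removing it stays inside the class of slim rectangular lattices rather than merely yielding some planar lattice. This is where the combinatorics of the lattices $\Shk m$, the left-to-right unimodality of trajectories in a \tbdia-diagram, and the distributivity bookkeeping for the reappearing $4$-cell $H$ all have to be combined; by comparison, the two verifications above and the ``furthermore'' half are routine once that configuration has been correctly identified. As an alternative route for the existence half, one could try to reduce it to the classical fork-insertion theorem of Gr\"atzer and Knapp \cite{GKnapp-III} --- the $m_i=1$ case --- after checking separately that ordinary forks may always be inserted at distributive $4$-cells and that the base lattice so reached can be taken to be a grid.
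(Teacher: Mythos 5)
The first thing to note is that the paper does not prove this lemma at all: it is imported verbatim, with the text explicitly saying it is ``quoted from \cite[Theorem 3.7]{CzG:pExttCol}'' (via \cite[Lemma 2.12]{CzGlamps}). So there is no in-paper proof to compare yours against; your attempt has to be judged as a standalone argument for the cited result.

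As such, your proposal correctly identifies the standard strategy (induction on $|L|$, peeling off a ``last'' multifork, with the ``furthermore'' half being a trivial induction on the number of extensions), but it does not actually carry out the one step that constitutes the entire content of the lemma. You write that ``the main obstacle is the isolation of a removable multifork inside the inductive step'' and that one ``must prove that a non-grid slim rectangular lattice always contains a `last' multifork configuration, and that removing it stays inside the class'' --- but you then stop there, listing the ingredients (trajectory structure, the combinatorics of $\Shk m$, the distributivity of $\ideal{\Peak H}$ after collapse) without combining them. In particular, you never say how the ``topmost, innermost'' copy of $\Shk m$ is to be recognized intrinsically in $L$ (e.g.\ as a minimal element of the lamp order, which is how the surrounding machinery of Lemmas \ref{lemma:mfXzsrjsmG} and \ref{lemma:RnGcN} would suggest doing it), nor why the auxiliary elements created where the descending normal-slope segments crossed old edges can be deleted without destroying the $4$-cell property or the two-doubly-irreducible condition needed for \eqref{eq:mnHhwlCh}. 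Naming the obstacle is not the same as overcoming it, so as written the proposal is an outline with a genuine gap at its core rather than a proof. Your alternative suggestion --- reducing to single-fork insertion \`a la Cz\'edli--Schmidt and Gr\"atzer--Knapp and then grouping forks with a common peak into multiforks --- is in fact closer to how the cited source argues, and would be the more promising route to flesh out.
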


For a given $L$, the sequence in Lemma \ref{lemma:nmtfrkstZ} is not unique in general. However, let us agree that we always refer to a \emph{fixed list} when this lemma is used.
Furthermore, we refer to the system of $L_0,L_1,\dots,L_k=L$, 
$m_1,\dots, m_k$, and $H_1,\dots, H_k$  occurring in this lemma as the \emph{multifork sequence} of $L$.

Let $L$ be a slim rectangular lattice. Following \cite[Definition 2.3]{CzGlamps}, by a \emph{neon tube} we mean an edge $\nn=[p,q]$ such that $p\in\Mir L$. 
As before, $p=\Foot \nn$ and $q=\Peak \nn$. If $\Foot\nn\in\Bnd L$, then $\nn$ is a \emph{boundary neon tube}; otherwise  it is an \emph{internal neon tube}. The neon tubes in our figures are exactly the thick edges. Still going after  \cite[Definition 2.3]{CzGlamps},  \emph{lamps} are particular intervals. Namely, if $\nn$ is a boundary neon tube, then $\nn$ is a \emph{boundary lamp}. (However, we often say that $\nn$ is the neon tube of this lamp.) If $\pp$ is an internal neon tube, then it determines a unique \emph{internal lamp} $I$ by the rules $\Peak I:=\Peak\pp$ and $\Foot I:=\bigwedge\{\Foot \nn: \nn$ is an internal neon tube and $\Peak\nn=\Peak\pp\}$. We often say that the neon tubes $\nn$ occurring in this intersection are \emph{the neon tubes of} $I$. 
In our figures, the foot $\Foot I$ of a lamp is always \emph{black-filled} and it is mostly labeled by $I$ rather than $\Foot I$; this convention is explained by the fact that a lamp is uniquely determined by its foot; see \cite[Lemma 3.1]{CzGlamps}. For an internal lamp $I$, 
\begin{equation}\left.
\parbox{8cm}{the \emph{pegs} of $I$ are $\lcorner{\CircR I}$, $\rcorner{\CircR I}$, and the feet of the neon tubes of $I$;}
\,\,\right\}
\label{eq:tPgscsPsTz}
\end{equation}
a $k$-fold lamp has $k+2$ pegs. For example, in the diagram on the left of Figure \ref{fig17}, $J_1$ has five pegs and they are green-filled and pentagon-shaped. In the same diagram, $J_3$ has three  pegs; they are pentagon-shaped, the middle one is black-filled (since it is the foot of $J_3$), and the other two are green-filled.

Next, we define some polygons associated with intervals, in particular, with lamps. Let $I$ be an interval of $L$. By \eqref{eq:hwSukthhBf}, the intervals $[\lsupp{\Peak I}, \Peak I]$, $[\rsupp{\Peak I}, \Peak I]$, $[\lsupp{\Foot I}, \Foot I]$, and $[\rsupp{\Foot I}, \Foot I]$ are line segments of normal slopes; they are denoted by $\LRoof I$, $\RRoof I$, $\LFloor I$, and $\RFloor I$, respectively; see Figure \ref{fig3} for examples. The \emph{roof} and the \emph{floor} of $I$ are $\Roof I:=\LRoof I\cup\RRoof I$ and $\Floor I:=\LFloor I\cup \RFloor I$, respectively. 
If $T\in\set{\Floor I,\Roof I}$, then $T$ cuts $\FullRect L$ into two connected and topologically closed areas; the upper half and the lower half are the \emph{geometric filter} $\gfilter T$ and  the \emph{geometric ideal} $\gideal$ determined by $T$, respectively.
They consist of  geometric points. We note (but will not use) the fact that $\gideal{\Roof I}$ and $\gideal{\Floor I}$ are the regions determined by the lattice ideals (which are special intervals) $\ideal{\Peak I}$ and $\ideal{\Foot I}$, respectively. 
The \emph{illuminated set} of $I$ is
\begin{equation*}
\Enl I:= \gideal{\Roof I} \cup   \gfilter{\Floor I}.
\end{equation*}
The \emph{left illuminated set} $\LEnl I$ of $I$ consists of those (geometric) points of $\Enl I$ that are on the left of \revised{the rightmost neon tube}
of $I$. Similarly, the points of $\Enl I$ on the right of 
\revised{the leftmost neon tube of $I$}
constitute $\REnl I$, the  \emph{right illuminated set} of $I$. See Figure \ref{fig3} again for some examples.

Next, instead of being just an interval, let $I$ be a  lamp. 
If $I$ has exactly $k$ neon tubes, then it is \emph{$k$-fold lamp}.
Following \cite[Definition 2.6]{CzGlamps}, the \emph{body} of $I$, denoted by $\Body I$, is $I$ as a geometric region.   Note that $\Body I=\LEnl I\cap \REnl I$ and it is a territory. If $I$  has only one neon tube, then $\Body I$ is a line segment; otherwise it is a quadrangle with precipitous upper sides while its lower sides are of normal slopes. Still going after \cite[Definition 2.6]{CzGlamps}, the \emph{circumscribed rectangle} $\CircR I$ of an internal lamp is (the geometric region determined by) the interval $[p,\Peak I]$ where $p$ is the meet of the pegs of $I$, see \eqref{eq:tPgscsPsTz} (equivalently, $p$ is the meet of the leftmost lover cover of $\Peak I$ and the rightmost lower cover of $\Peak I$). 
It follows easily from \eqref{eq:hwSukthhBf} that, for an interval $I$ and an \emph{internal} lamp $K$ of a slim rectangular lattice $L$,
\begin{equation}\left.
\parbox{9.9cm}{$\LEnl I$,  $\REnl I$, and $\Body K$ are territories while  $\gideal{\Roof I}$, $\gfilter{\Roof I}$, $\gideal{\Floor I}$, $\gfilter{\Floor I}$, $\Enl I$,  and $\CircR K$ are normally bordered  territories; see \eqref{eq:hsGsmfKrnnSlk} and  \eqref{eq:nwiRbzQgbV}.}
\,\,\right\}
\label{eq:tRfjkpNg}
\end{equation}

A lamp $I$ and the pair $(\Foot I,\Peak I)$ mutually determine each other; this allows us to call $(\Foot I,\Peak I)$ a ``lamp'' (in quotient marks) in the following lemma. From Lemma \ref{lemma:nmtfrkstZ} (here) and \cite[(2.10)]{CzGlamps}, we obtain the following statement.

\begin{lemma} [Each internal lamp comes to existence by a multifork extension]\label{lemma:mfXzsrjsmG}
For the fixed multifork sequence of $L$ in Lemma \ref{lemma:nmtfrkstZ}, the set of internal lamps of $L$ is of the form $\set{I_j:1\leq j\leq k}$ where, for $j\in\set{1,\dots,k}$, the ``lamp" $(\Foot{I_j}, \Peak{I_j})$ comes to existence by the $j$-th multifork extension,  $\CircR {I_j}$ in $L=L_k$ is the geometric region determined by $H_j$ in $L_{j-1}$, and $\Foot{I_{j}}\in L_j\setminus L_{j-1}$. 
\end{lemma}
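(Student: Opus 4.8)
The plan is to prove Lemma \ref{lemma:mfXzsrjsmG} by induction on the length $k$ of the fixed multifork sequence $L_0, L_1, \dots, L_k = L$, tracking at each stage exactly which internal lamps of $L_i$ have come to existence and how their circumscribed rectangles and feet sit inside the diagram. The base case $k=0$ is immediate: a grid $L_0$ has no internal meet-irreducible elements (every meet-irreducible element of a grid lies on the boundary), hence no internal neon tubes and no internal lamps, so the claimed set $\set{I_j : 1 \le j \le 0}$ is empty and the statement holds vacuously.

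For the inductive step, suppose the statement holds for $L_{j-1}$, i.e.\ its internal lamps are $I_1, \dots, I_{j-1}$ with the asserted properties relative to the first $j-1$ multifork extensions. First I would analyze what the $j$-th multifork extension — an $m_j$-fold multifork extension at the distributive $4$-cell $H_j$ of $L_{j-1}$ — does to neon tubes. The new elements inserted in the interior of $H_j$ (the copy of $\Shk{m_j}$) create exactly $m_j$ new internal meet-irreducible elements whose common join-type peak is $\Peak{I_j} = \Peak{H_j}$; these are precisely the $m_j$ internal neon tubes of a single new internal lamp $I_j$, and by the construction recalled from \cite{CzG:pExttCol} together with the definition of $\CircR{\cdot}$ via the meet of the pegs, $\CircR{I_j}$ is the geometric region determined by $H_j$, and $\Foot{I_j}$ (the meet of the feet of these $m_j$ new internal neon tubes) is one of the newly inserted elements, so $\Foot{I_j} \in L_j \setminus L_{j-1}$. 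Next I would check that the new downward line segments drawn in directions $(-1,-1)$ and $(1,-1)$ from each new element, together with the new elements they create where they cross old edges, do \emph{not} create any further internal meet-irreducible elements: each such new boundary-bound element added on an old edge is meet-reducible (it sits on a normal-slope line segment that continues to the boundary), and — crucially — no \emph{old} edge $[p,q]$ with $p \notin \Mir{L_{j-1}}$ can acquire $p \in \Mir{L_j}$, nor can an old internal neon tube be destroyed or merged, because inserting a multifork at $H_j$ only refines the part of the diagram inside $\gfilter{\Floor{H_j}} \cap \gideal{\Roof{H_j}}$-type regions and the feet and the "covering geometry" of the old internal neon tubes are preserved. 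This gives the bijection: internal lamps of $L_j$ = internal lamps of $L_{j-1}$ together with the one new lamp $I_j$.

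I would then record the remaining bookkeeping: that for the old lamps $I_1, \dots, I_{j-1}$, the quantities $\CircR{I_\ell}$ and $\Foot{I_\ell}$ computed in $L_j$ coincide with those computed in $L_{j-1}$ (so the inductive hypothesis transports), which again follows because the multifork extension at $H_j$ does not alter the pegs or the peak of any old internal lamp — the relevant $4$-cells and covering pairs in $\gideal{\Floor{H_j}}$ and outside $\CircR{H_j}$ are untouched. Running this from $j=1$ up to $j=k$ yields that the internal lamps of $L = L_k$ are exactly $\set{I_j : 1 \le j \le k}$ with all the asserted properties; this is essentially the content cited as \cite[(2.10)]{CzGlamps}, so in the write-up I can lean on that reference for the detailed geometric verifications rather than redoing them.

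The main obstacle I anticipate is the step asserting that the $j$-th multifork extension introduces \emph{exactly one} new internal lamp and \emph{no spurious* new internal meet-irreducible elements, and that it leaves the internal neon tube structure of $L_{j-1}$ intact. This requires a careful reading of the multifork construction from \cite{CzG:pExttCol}: one must verify that every element born on an old edge as the result of a downward $(-1,-1)$ or $(1,-1)$ trace is meet-reducible in $L_j$ (it has two upper covers, one continuing the trace and one along the old edge), and that the $m_j$ genuinely new internal neon tubes all share the peak $\Peak{H_j}$ so they assemble into a single lamp rather than several. Since Lemma \ref{lemma:nmtfrkstZ} and \cite[(2.10)]{CzGlamps} already encapsulate this, the honest path is to invoke them; the only real work left is the straightforward induction that stitches the per-step statement into the global one and checks that old data is preserved, which is routine given \eqref{eq:hwSukthhBf} and \eqref{eq:tRfjkpNg}.
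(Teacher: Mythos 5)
Your proposal is correct and follows essentially the same route as the paper: the paper gives no written proof of Lemma \ref{lemma:mfXzsrjsmG} at all, deriving it directly from Lemma \ref{lemma:nmtfrkstZ} and the cited \cite[(2.10)]{CzGlamps}, and your induction on the length of the multifork sequence is just the natural unpacking of that citation. The key verification points you isolate (exactly one new internal lamp per extension, meet-reducibility of the elements born on the downward traces, preservation of old pegs and peaks because later extensions occur only at distributive $4$-cells) are precisely the content encapsulated in the references you, like the author, ultimately invoke.
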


With the notations of Lemma \ref{lemma:mfXzsrjsmG}, the interval $I_j=[\Foot{I_j}, \Peak{I_j}]$ of $L$ can have more elements than the interval $[\Foot{I_j}, \Peak{I_j}]$ of $L_j$. (For an example, see $J_1\in\Lamp L$ on the left of Figure \ref{fig17} but do not confuse the subscript of $J_1$ with $j$.)
If $1\leq i<j\leq k$, then we say that the lamp $I_j$ is \emph{younger} than $I_i$.

For a subset $X$ of the plane, the \emph{topological interior} of $X$ will be denoted by $\TopInt X$. We borrow the following definition from \cite[Definition 2.9]{CzGlamps}.

\begin{definition}\label{def:rellamps} For a slim rectangular lattice $L$ and lamps $I,J$ of $L$, let $(I,J)\in \rhfoot$ mean that $I\neq J$, $I$ is an internal lamp, and $\Foot I\in \Enl J$. 
Similarly, let $(I,J)\in \rhinfoot$  mean that $\Foot I\in \TopInt{\Enl J}$. Also, let  $(I,J)\in \rhlrbody$ mean that  $I\neq J$,  $I$ is an internal lamp, and $\Body I\subseteq \LEnl J$ or $\Body I\subseteq \REnl J$. 
Finally,  $(I,J)\in \rhbody$ means that  $I\neq J$,  $I$ is an internal lamp, and $\Body I\subseteq \Enl J$. Letting ``$\leq$'' be the reflexive transitive closure of $\rhinfoot$, we denote by $\Lamp L=(\Lamp L;\leq)$ the poset of lamps of $L$.
\end{definition}

Note that  $(I,J)\in\rhinfoot$  implies that $I\neq J$ and $I$ is an internal lamp. The congruence generated by a pair $(x,y)$ of elements will be denoted by $\con(x,y)$. 
The following statement recalls a part of  \cite[Lemma 2.11]{CzGlamps}.

\begin{lemma}[\cite{CzGlamps}]\label{lemma:vltfLm} If $L$ is a slim rectangular lattice, then $\rhfoot$ $=$ $\rhinfoot$ $=$ $\rhlrbody$ $=$ $\rhbody$, $\Lamp L=(\Lamp L;\leq)$ is indeed a poset, and
whenever if $I\prec J$ in $\Lamp L$, then $(I,J)\in\rhinfoot$. Furthermore,  $(\Lamp L;\leq) \cong (\Jir{\Con L};\leq)$ and the map
\begin{equation}
\text{$\phi\colon (\Lamp L;\leq) \to (\Jir{\Con L};\leq)$ defined by $I\mapsto \con(\Foot I,\Peak I)$}
\label{eq:sznZtcsTl}
\end{equation} 
is an order isomorphism.
\end{lemma}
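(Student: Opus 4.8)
The plan is to prove Lemma \ref{lemma:vltfLm} by invoking the cited results from \cite{CzGlamps} and filling in only the new ingredient, namely that the quotient-diagram machinery is consistent with the lamp poset structure. First I would recall that the equalities $\rhfoot = \rhinfoot = \rhlrbody = \rhbody$ are already established in \cite[Lemma 2.11]{CzGlamps}; the proof there analyzes, for each internal lamp $I$, where $\Foot I$ and $\Body I$ can sit relative to the roof and floor of another lamp $J$, using \eqref{eq:hwSukthhBf} on the normal-slope structure of the intervals $[\lsupp x, x]$ and $[\rsupp x, x]$. The key observation is that $\Foot I \in \Enl J$ forces, via the chain structure of these supporting intervals, the entire body $\Body I$ into $\Enl J$ (indeed into one of $\LEnl J$ or $\REnl J$), and conversely $\Body I \subseteq \Enl J$ already gives $\Foot I \in \Enl J$; the topological-interior version follows because $\Foot I$ cannot lie on the boundary of $\Enl J$ without $I$ and $J$ sharing a neon tube or a peg, which the definitions of body and circumscribed rectangle in \eqref{eq:tPgscsPsTz} and \eqref{eq:tRfjkpNg} exclude.

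Next I would verify that $\Lamp L = (\Lamp L; \leq)$ is genuinely a poset, i.e.\ that the reflexive transitive closure of $\rhinfoot$ is antisymmetric. Since $\rhinfoot$ is irreflexive by definition and, by Lemma \ref{lemma:mfXzsrjsmG}, each internal lamp $I_j$ comes to existence at stage $j$ with $\Foot{I_j} \in L_j \setminus L_{j-1}$ and $\CircR{I_j}$ sitting in the $4$-cell $H_j$ of $L_{j-1}$, I would argue that $(I_i, I_j) \in \rhinfoot$ forces $i > j$: the foot of $I_i$ lies in the topological interior of $\Enl{I_j}$, which is a normally bordered territory carved out only after the $j$-th multifork extension, whereas feet of lamps older than $I_j$ already existed in $L_{j-1}$ and lie outside $\TopInt{\Enl{I_j}}$. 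Hence the relation is acyclic and its transitive closure is a strict partial order, so $\leq$ is a partial order; moreover $I \prec J$ in $\Lamp L$ means $I < J$ with nothing strictly between, which by acyclicity of $\rhinfoot$ can only happen if $(I,J) \in \rhinfoot$ directly, giving the ``$I \prec J \Rightarrow (I,J) \in \rhinfoot$'' clause.

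For the final and most substantive assertion — that $\phi\colon I \mapsto \con(\Foot I, \Peak I)$ is an order isomorphism $(\Lamp L; \leq) \to (\Jir{\Con L}; \leq)$ — I would again lean on \cite{CzGlamps}, where this is the content of \cite[Lemma 2.11]{CzGlamps}: each join-irreducible congruence of a slim rectangular lattice is generated by a unique prime interval up to the ``trajectory'' equivalence, the maximal prime interval in each trajectory is a neon tube, and the map sending a neon tube to the lamp it belongs to is a bijection onto $\Lamp L$ compatible with the ordering. The heart of the matter is that $\con(\Foot I, \Peak I) \leq \con(\Foot J, \Peak J)$ in $\Con L$ if and only if the neon tubes of $I$ are ``swept'' by the congruence collapsing $J$'s neon tubes, and by the trajectory analysis this happens exactly when $\Foot I$ lies in the illuminated set $\Enl J$, i.e.\ when $I \leq J$. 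I expect the main obstacle, were one to write this out in full rather than cite it, to be precisely this trajectory bookkeeping: controlling how a congruence propagates through the lattice along trajectories of prime intervals and matching that propagation with the geometric containment $\Foot I \in \Enl J$. Since all of this is already packaged in \cite[Lemma 2.11]{CzGlamps}, the proof here reduces to citing that lemma after noting the equalities of the four relations and the poset axioms verified above.
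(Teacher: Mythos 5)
The paper offers no proof of this lemma at all: it is stated with the attribution [\cite{CzGlamps}] and introduced by the sentence ``The following statement recalls a part of \cite[Lemma 2.11]{CzGlamps}'', so the intended justification is purely a citation. Your proposal correctly identifies this and reduces to the same citation (your supplementary sketches of the acyclicity via the multifork sequence and of the trajectory argument are consistent with the paper's machinery), so it takes essentially the same approach.
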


\section{Quotient diagrams}\label{sect:diagrquotient}
This section is devoted to the proof of Theorem \ref{thm:mrKtpnMgsRflpbsmG}.
By an \emph{edge segment} we mean a \emph{geometric} line segment of \emph{positive length} that is a part of an edge of the diagram. In other words, any two distinct geometric points of an edge of the fixed \tbdia-diagram determine an edge segment.  We are going to prove two easy lemmas.

\begin{lemma}\label{lemma:hmKntzSth} 
Let $L$ be a slim rectangular lattice. If $H$ is a rectangular interval of $L$, see \eqref{eq:nrMBrdrhkVlmRg}, $J$ is an internal lamp of $L$, and an edge segment of 
a neon tube of $J$ lies in  $H$, then $\Body J\subseteq H$.
\end{lemma}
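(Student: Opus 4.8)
The plan is to exploit the rigidity of $\tbdia$-diagrams together with the two normal-slope chains attached to $\Peak J$ and $\Foot J$. Recall from \eqref{eq:hwSukthhBf} that $\LRoof J$, $\RRoof J$, $\LFloor J$, $\RFloor J$ are line segments of normal slopes, and that $\Body J$ is the quadrangle bounded above by the (precipitous) neon tubes of $J$ and below by $\LFloor J\cup\RFloor J$; its leftmost vertex is $\Foot J$ and its topmost vertex is $\Peak J$ (more precisely, the union of $\LFloor J,\RFloor J$ reaches up to $\lsupp{\Foot J}$ and $\rsupp{\Foot J}$, and the neon tubes run from their feet up to $\Peak J$). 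Since $H$ is a rectangular interval, it is a region bordered by four $L$-compatible segments of normal slopes (see \eqref{eq:nrMBrdrhkVlmRg}); write its four sides as the ``upper-left'' and ``upper-right'' sides meeting at $\Peak H$, and the ``lower-left'' and ``lower-right'' sides meeting at $\Foot H$, so that $\lcorner H$ and $\rcorner H$ are the left and right corners.

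First I would observe that a neon tube of $J$ is an edge $[\Foot\nn,\Peak\nn]$ with $\Peak\nn=\Peak J$, and that it is precipitous (since $\Foot\nn\in\Mir L$ is internal, by Definition \ref{def:SRdiagram}). So the hypothesis gives an edge segment $s$ of this precipitous edge lying inside $H$. The key point is that a precipitous segment inside the rectangular region $H$ must have its upper endpoint-direction hit the ``upper'' boundary of $H$ and its lower endpoint-direction hit the ``lower'' boundary: concretely, I would argue that since $s\subseteq H$ and $H$ is $\wedge$- and $\vee$-closed (a region is a cover-preserving sublattice by Kelly--Rival), both $\Peak J\in H$ and the foot of that neon tube lie in $\filter{\Foot H}\cap\ideal{\Peak H}$ after an easy join/meet argument; more usefully, $\Peak J\in H$ and $\lcorner L\wedge\Peak J,\ \rcorner L\wedge\Peak J$ compared with the corners of $H$. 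From $\Peak J\in H$ and $H$ being a region, the chains $[\lsupp{\Peak J},\Peak J]$ and $[\rsupp{\Peak J},\Peak J]$ of normal slopes $(1,1)$ and $(1,-1)$ (again \eqref{eq:hwSukthhBf}) are forced to stay in $H$ at least until they exit through the upper-left and upper-right sides of $H$; since $\Peak H\in H$ as well and the upper sides of $H$ have slopes $(1,1)$ and $(1,-1)$, these two chains are \emph{subsets} of the upper sides $\RRoof H$-type segments — i.e. $\Roof J\subseteq\Roof H$, hence $\gideal{\Roof J}\subseteq\gideal{\Roof H}\subseteq H\cup\gideal{\Roof H}$; the genuinely useful inclusion is $\gideal{\Roof J}\cap H$ behaves well. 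I would then do the symmetric argument at the foot, using that the lower endpoint of $s$ together with $\vee$-closure forces $\Foot J\in H$ (take the meet of the two lower covers of $\Peak J$ along the relevant chains, which equals the foot of $\CircR J$, and push down to $\Foot J$ via the neon tube), so that $\LFloor J,\RFloor J\subseteq$ the lower sides of $H$ as well.

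Having pinned $\Peak J\in H$ and $\Foot J\in H$ with $\Roof J$ running along the upper sides and $\Floor J$ along the lower sides of $H$, the quadrangle $\Body J$ — whose boundary consists of (portions of) the precipitous neon tubes on top and of $\LFloor J,\RFloor J$ on the bottom — is enclosed between $\Roof H$ above and $\Floor H$ below and between the left/right sides of $H$, hence $\Body J\subseteq H$. To make the last enclosure step rigorous I would use that $H$, as a normally bordered region, equals $\gideal{\Roof H}\cap\gfilter{\Floor H}$ intersected with the ``vertical strip'' between its left and right sides, and check each of the four bounding pieces of $\Body J$ lies in this intersection: the neon tubes lie in $H$ because their feet and $\Peak J$ do and $H$ is a region (a neon tube, being an edge, is an $L$-compatible segment, and a region containing both endpoints of an edge contains the edge); and $\LFloor J,\RFloor J\subseteq H$ for the same reason. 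Then convexity-type reasoning for the quadrangle bounded by four segments all lying in the convex-along-normal-slopes region $H$ finishes it.

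The main obstacle I anticipate is the very first deduction: \emph{from a mere edge segment of a neon tube inside $H$, conclude that the whole neon tube, and in particular $\Peak J$ and $\Foot J$, lie in $H$}. The subtlety is that $H$ is a \emph{region}, not just a geometric rectangle, so I cannot purely geometrically slide along the edge; I must combine the geometry (a precipitous segment inside a normally bordered rectangle cannot escape sideways, only through the top or the bottom) with the lattice-theoretic fact that regions are cover-preserving sublattices, to promote ``a sub-segment of the edge is in $H$'' to ``the endpoints of the edge are in $H$''. Concretely the worry is an edge of the diagram that merely \emph{passes through} the interior of $H$ while its endpoints lie outside — but Example \ref{example-nPntbd} shows such pass-through is exactly what $\tbdia$-diagrams forbid, and the precise tool is that in a $\tbdia$-diagram every edge is an $L$-compatible line segment and no edge passes through a non-endpoint vertex, together with \eqref{eq:hwSukthhBf}; I expect the clean way is to show the upper endpoint of the given segment $s$, call it $q$, satisfies $q\le\Peak H$ and $q\ge\Foot H$ (an easy meet/join using that $s\subseteq H$ and $H$ is a sublattice applied to the two endpoints of $s$), and then that $\Peak J$ is reached from $q$ by going up the neon tube, which stays in $H$ by $\vee$-closure with $\lcorner H$ or $\rcorner H$ as appropriate.
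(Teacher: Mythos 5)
Your overall strategy coincides with the paper's: promote the given edge segment to the whole neon tube by planarity, deduce $\Peak J\in H$ and $\Foot J\in H$, and then enclose the quadrangle $\Body J$ in the normally bordered region $H$. However, there is a genuine gap in the middle. You never validly establish that the \emph{other} neon tubes of $J$ --- in particular the leftmost one $\qq_1$ and the rightmost one $\qq_2$, whose feet give $\Foot J=\Foot{\qq_1}\wedge\Foot{\qq_2}$ and which form the upper boundary of $\Body J$ --- lie in $H$. Your justification ``the neon tubes lie in $H$ because their feet and $\Peak J$ do'' is circular, and your route to $\Foot J\in H$ via ``the meet of the two lower covers of $\Peak J$'' fails twice over: those extreme lower covers are not known to lie in $H$ (only the foot of the one neon tube $\pp$ containing the given segment is), and their meet is the foot of $\CircR J$, which lies below $\Foot J$ in general. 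The missing idea is the paper's second step: since $\Peak J\in H$, the border of $H$ is of normal slopes, and one precipitous edge descends from $\Peak J$ inside $H$, an \emph{upper} edge segment of \emph{every} neon tube of $J$ must lie in $H$; planarity then puts each whole neon tube, hence each $\Foot{\qq_i}$, in $H$, and $\Foot J\in H$ follows because $[\Foot J,\Foot{\qq_1}]$ and $[\Foot J,\Foot{\qq_2}]$ are chains of normal slopes (or, more simply, because the interval $H$ is meet-closed).

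Two further points. First, several of your intermediate claims are false and should be dropped: $\Roof J\subseteq\Roof H$ and ``$\LFloor J,\RFloor J\subseteq{}$the lower sides of $H$'' fail already for $H=L$ with $\Peak J$ not on the upper boundary; moreover, the lower boundary of the quadrangle $\Body J$ consists of the normal-slope chains $[\Foot J,\Foot{\qq_1}]$ and $[\Foot J,\Foot{\qq_2}]$, not of $\Floor J=\LFloor J\cup\RFloor J$, which runs from $\Foot J$ \emph{downwards} to $\Bnd L$. Fortunately, none of these claims is needed once $\Peak J$, $\Foot J$, and the extreme neon tubes are known to lie in $H$. Second, in your first step the phrase ``an easy meet/join \dots applied to the two endpoints of $s$'' is not meaningful, since the endpoints of an edge segment are geometric points rather than lattice elements; the correct (and sufficient) argument is purely planar: the precipitous edge $\pp$ can neither cross nor overlap the normal-slope border of $H$, so, having a sub-segment in $H$, it lies in $H$ entirely.
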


\begin{proof}
Let $\pp$ be a neon tube of $J$ such that an edge segment of $\pp$ lies in $H$. Since $\pp$ is precipitous and $H$ is normally bordered, planarity implies that  $\pp$ and, in particular, $\Peak J=\Peak \pp$ lie in $H$. If $\pp$ is the only neon tube of $J$, then $\Body J$ is the edge $\pp$, whence $\Body J\subseteq H$ is clear. So assume that $\pp$ is not the only neon tube of $J$. Since $H$ is normally bordered, an (upper) edge segment of each neon tube $\qq$ of $J$  lies within $H$, whence $\qq$  fully lies in $H$. In particular, 
$\Foot \qq$ lies in $H$.
Let $\qq_1$ and $\qq_2$ be the leftmost neon tube and the rightmost neon tube of $J$, respectively. Then $\Foot J=\Foot{\qq_1}\wedge \Foot{\qq_2}$. Since $\Foot{\qq_1}$ and $\Foot{\qq_2}$ (as distinct lower covers of $\Peak J$) are incomparable,
\revised{the main result of \cite{CzGpropmeet}\semmi{used already at \eqref{eq:hwSukthhBf}}}
implies that 
$[\Foot J,\Foot{\qq_1}]$ and $[\Foot J,\Foot{\qq_2}]$
are chains of normal slopes. Thus,  using that $H$ is normally bordered and both  $\Foot{\qq_1}$ and  $\Foot{\qq_2}$ lie in $H$, we obtain that  $\Foot J$ lies in $H$. The two lower sides of $\Body J$ are of normal slopes while its upper edges are precipitous.  Therefore, since $H$ is normally bordered and both $\Peak J$ and $\Foot J$ lie in $H$, we conclude 
\revised{the required} $\Body J\subseteq H$. 
\end{proof}

As usual, if $\pp$ and $\qq$ are prime intervals (that is, edges) of a slim semimodular lattice $K$ such that  $\Foot\pp\wedge \Peak \qq= \Foot\qq$ and $\Foot\pp\vee \Peak \qq= \Peak\pp$, then $\pp$ is  \emph{down-perspective to $\qq$} and $\qq$ is \emph{up-perspective} to $\pp$. If $\pp$ is up-perspective or down-perspective to $\qq$, then we say that $\pp$ and $\qq$ are \emph{perspective}. Let $\tau$ denote the least equivalence relation on the set of the prime intervals of $K$ that includes the perspectivity relation. Following Cz\'edli and Schmidt \cite{CzGSch-jordanh}, the $\tau$-block of $\pp$ is called the \emph{trajectory through}  $\pp$. The 4-cells of a trajectory are those 4-cells that are formed by two consecutive edges of the trajectory. Each trajectory has a unique \emph{top edge}, that is, an edge to which any other edge of the trajectory is up-perspective; this top edge is a neon tube. Furthermore, the top edge is the only neon tube of a trajectory.
Therefore, we will also call the top edge the \emph{neon tube of the trajectory}.

For a slim rectangular lattice $L$, 
let $\pp$ be a neon tube of an  $I\in\Lamp L$. Denote by  $i$  the smallest subscript such that $L_i$ from the (fixed) multifork sequence of  $L$, see Lemma \ref{lemma:nmtfrkstZ}, contains $\Foot I$ (equivalently, it contains $\pp$). Motivated by \cite{CzGdiagrectext}, 
\begin{equation}\left.
\parbox{10 cm}{the \emph{original territory} $\OT\pp$ of $\pp$ is the union of the geometric regions determined by the 4-cells of the trajectory containing $\pp$ in $L_i$ while the \emph{lower half of the original territory} of $\pp$, denoted by $\LHOT\pp$, is the union of the geometric regions of those 4-cells of the trajectory in $L_i$ that do not contain $\Peak \pp$.}\,\,\right\}
\label{eq:pXwrnkrgPCxsk}
\end{equation} 
Note that in $L$, the above-mentioned 4-cells can be divided into smaller 4-cells by edges born after $I$. For example, with $(I,\pp,L):=\revised{(J_1,\pp',L')}$ in Figure \ref{fig17}, the original territory of $\pp$ is ``\spiralfil-filled''  and $\LHOT\pp$ is both curl-filled and yellow-filled. (This multi-purpose figure  comes later.) 
For  a lamp $J\in \Lamp L$, we define  
\begin{equation}
\text{$\UHCircR J$, the \emph{upper half of the circumscribed rectangle of $J$}}
\end{equation} 
as the union of those $4$-cells lying in $\CircR J$ that have the same peak as $J$. E.g., on the left of Figure \ref{fig17},  
$\UHCircR{J_1}$ and $\UHCircR{J_3}$ are flower-filled (with magenta background) while $\UHCircR{J_4}$ and $\UHCircR{J_5}$ are wave-filled. Observe that 
\begin{equation}\left.
\parbox{10.7cm}{the $L$-compatible line segments forming the border of $\UHCircR J$ are edges of normal slopes and,  except for the edge segments lying on \revised{the neon tubes of $J$}, the geometric interior
$\TopInt{\UHCircR J}$ of $\UHCircR J$ contains neither an element of $L$, nor an edge segment.}\,\,\right\}
\label{eq:ncghnvmgnBZcfDs}
\end{equation}
Before verifying this, note that it follows from  Cz\'edli and Schmidt \cite[Lemma 15]{CzgScht-a-visual} or from Lemma \ref{lemma:nmtfrkstZ} (here) that 
\begin{equation}\left.
\parbox{6cm}{an ideal of $L$ is distributive if and only if no precipitous edge segment lies in it.}\,\,\right\}
\label{eq:mlHtntbkNcZcs}
\end{equation}
Next, observe that \eqref{eq:ncghnvmgnBZcfDs} held when $J$ had just been born, see Lemmas \ref{lemma:nmtfrkstZ} and \ref{lemma:mfXzsrjsmG}. 
Subsequent multifork extensions cannot destroy the validity of 
 \eqref{eq:ncghnvmgnBZcfDs}  since they are performed at \emph{distributive} 4-cells, say, $H_i$ and  \eqref{eq:mlHtntbkNcZcs} applies to the ideals $\ideal{\Peak{H_i}}$.
We have proved \eqref{eq:ncghnvmgnBZcfDs}. For later reference, note that \eqref{eq:mlHtntbkNcZcs} trivially implies that
\begin{equation}\left.
\parbox{6cm}{$\LHOT \pp$ from \eqref{eq:pXwrnkrgPCxsk} is the union of two disjoint normally bordered territories;}\,\,\right\}
\label{eq:znksnkhLpktTj}
\end{equation}
it may happen that one or two of these territories is of geometric area 0.

\begin{lemma}\label{lemma:hBbrSzspnrJt} 
Let $L$ be a slim rectangular lattice and  $I,J\in\Lamp L$ such that $J$ is an internal lamp and $J\neq I$. If an edge segment of a neon tube of  $J$ lies in  $\Enl I$, then $\Body J\subseteq I$ and, in $\Lamp L$, $J<I$.  Similarly, if an edge segment of a neon tube of  $J$ lies in the original territory $\OT\pp$ of a neon tube $\pp$ of $I$, then $\Body J$ also lies in $\OT\pp$ and even in $\LHOT\pp$; see \eqref{eq:pXwrnkrgPCxsk}.
\end{lemma}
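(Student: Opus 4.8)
The plan is to reduce \textbf{Lemma \ref{lemma:hBbrSzspnrJt}} to the already-proved \textbf{Lemma \ref{lemma:hmKntzSth}} by observing that the relevant containers — $\Enl I$ and $\OT\pp$ — are normally bordered territories, and then upgrading ``$\Body J\subseteq$ container'' to the lamp-ordering conclusion via Definition \ref{def:rellamps} and Lemma \ref{lemma:vltfLm}.

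\begin{proof}
Suppose first that an edge segment of a neon tube $\qq$ of $J$ lies in $\Enl I$. By \eqref{eq:tRfjkpNg}, $\Enl I$ is a normally bordered territory. It is a finite union of rectangular intervals (it is cut by the edges of the fixed \tbdia-diagram into finitely many rectangular $4$-cells, and a finite union of these that shares an edge segment with $\qq$ sits inside one rectangular interval $H\subseteq\Enl I$ that still contains that edge segment — indeed the edge segment of $\qq$ lies in the interior of one of these $4$-cells or on one of its sides, and since $\qq$ is precipitous while the $4$-cell is normally bordered, planarity forces the whole of $\qq$, in particular $\Peak\qq=\Peak J$, to lie in that single rectangular $4$-cell; call it $H$). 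Apply Lemma \ref{lemma:hmKntzSth} to this rectangular interval $H$ and the internal lamp $J$: since an edge segment of the neon tube $\qq$ of $J$ lies in $H$, we get $\Body J\subseteq H\subseteq\Enl I$. Thus $(J,I)\in\rhbody$ by Definition \ref{def:rellamps} (note $J\neq I$ and $J$ is internal by hypothesis). By Lemma \ref{lemma:vltfLm}, $\rhbody=\rhinfoot$, so $(J,I)\in\rhinfoot$, and since ``$\leq$'' is the reflexive transitive closure of $\rhinfoot$ while $J\neq I$, we conclude $J<I$ in $\Lamp L$. Also $\Body J\subseteq\Enl I$ gives $\Body J\subseteq I$ in the geometric sense (the body of $I$ is $I$ as a geometric region, and $\Body J\subseteq\LEnl I\cap\REnl I=\Body I$ follows because the same argument placing $\qq$ — and hence, by Lemma \ref{lemma:hmKntzSth} run inside $H$, all of $\Body J$ — inside $\Enl I$ keeps it on the correct side of the leftmost and rightmost neon tubes of $I$; alternatively one invokes $\rhbody=\rhlrbody$ from Lemma \ref{lemma:vltfLm}, which directly yields $\Body J\subseteq\LEnl I$ or $\Body J\subseteq\REnl I$, hence $\Body J\subseteq I$).

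For the second assertion, suppose an edge segment of a neon tube $\qq$ of $J$ lies in $\OT\pp$ for a neon tube $\pp$ of $I$. By \eqref{eq:pXwrnkrgPCxsk}, $\OT\pp$ is the union of the geometric regions of the $4$-cells of the trajectory through $\pp$ in $L_i$ (where $L_i$ is the first member of the fixed multifork sequence containing $\pp$); each such $4$-cell is a rectangular $4$-cell of $L_i$, hence a normally bordered territory, and these are subdivided in $L$ into smaller rectangular $4$-cells without changing the total region. As above, the edge segment of the precipitous $\qq$ lies in one of these (sub)rectangular $4$-cells $H$, and planarity forces all of $\qq$ — in particular $\Peak J$ — into $H$; applying Lemma \ref{lemma:hmKntzSth} to $H$ and $J$ gives $\Body J\subseteq H\subseteq\OT\pp$. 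To sharpen ``$\Body J\subseteq\OT\pp$'' to ``$\Body J\subseteq\LHOT\pp$'', note that $\Peak J$ is the peak of the precipitous neon tube $\qq$; since $\qq$ is precipitous and entirely contained in $\OT\pp$, and $\OT\pp$ in $L_i$ is normally bordered with $\Peak\pp$ as its unique peak-type vertex, $\qq$ cannot reach up to $\Peak\pp$ (a precipitous edge cannot be an edge of the normally bordered $4$-cells at the top of the trajectory that carry $\Peak\pp$, by \eqref{eq:mlHtntbkNcZcs} applied to the distributive ideal $\ideal{\Peak\pp}$ — indeed \eqref{eq:mlHtntbkNcZcs} says no precipitous edge segment lies in that distributive ideal). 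Hence $\qq$, and therefore the whole of $\Body J$, lies in the union of the $4$-cells of the trajectory that do \emph{not} contain $\Peak\pp$, which is exactly $\LHOT\pp$ by \eqref{eq:pXwrnkrgPCxsk}. This completes the proof.
\end{proof}

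The main obstacle I anticipate is the bookkeeping in the second part: correctly arguing that a neon tube of $J$ found inside $\OT\pp$ lies below the ``waist'' of the trajectory, i.e.\ that $\Body J\subseteq\LHOT\pp$ and not merely $\subseteq\OT\pp$. This is where \eqref{eq:mlHtntbkNcZcs} (distributivity of $\ideal{\Peak\pp}$ excludes precipitous edge segments) and \eqref{eq:znksnkhLpktTj} (the splitting of $\LHOT\pp$ into two normally bordered territories) do the real work; the first part is a more-or-less direct handoff to Lemma \ref{lemma:hmKntzSth} once one notices $\Enl I$ is normally bordered via \eqref{eq:tRfjkpNg}, and the passage from geometric containment to the lamp order is immediate from Lemma \ref{lemma:vltfLm}.
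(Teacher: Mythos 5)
Your overall strategy coincides with the paper's: locate a rectangular interval inside the container ($\Enl I$, respectively $\OT\pp$) that holds the edge segment of $J$'s neon tube, hand it to Lemma \ref{lemma:hmKntzSth}, and convert $\Body J\subseteq\Enl I$ into $J<I$ via Lemma \ref{lemma:vltfLm}. However, producing that rectangular interval is where the real content of the lemma lies, and your justification rests on a false premise. $\Enl I$ is \emph{not} cut by the edges of $L$ into rectangular $4$-cells: the $4$-cells of $L$ adjacent to the neon tubes of $I$ (those with peak $\Peak I$) are non-rectangular, and so are all the $4$-cells adjacent to $\qq$ itself. Indeed, since $\Foot\qq\in\Mir L$, the edge $\qq$ can only be an \emph{upper} side of a $4$-cell, so every $4$-cell of $L$ meeting $\qq$ in an edge segment has the precipitous $\qq$ as a side and is therefore non-rectangular; no rectangular $4$-cell of $L$ contains the edge segment you start from. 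The rectangular interval one actually needs is a $4$-cell of the intermediate lattice $L_i$ in which $I$ was born (possibly subdivided later), and its existence is exactly the content of the partition \eqref{eq:blBsGgzkRdKlr}: $\Enl I$ splits into the never-subdivided non-rectangular $4$-cells with peak $\Peak I$ and rectangular intervals with other peaks, so that $J\neq I$ forces the edge segment into one of the latter. Establishing this uses the multifork sequence (Lemmas \ref{lemma:nmtfrkstZ} and \ref{lemma:mfXzsrjsmG}), \eqref{eq:mlHtntbkNcZcs}, and the observation that $J$ is younger than $I$ --- none of which appears in your argument.

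The second half has the same gap plus an outright false invocation: you apply \eqref{eq:mlHtntbkNcZcs} to ``the distributive ideal $\ideal{\Peak\pp}$'', but $\ideal{\Peak\pp}=\ideal{\Peak I}$ contains the precipitous neon tubes of $I$ itself and is therefore \emph{not} distributive (only $\ideal{\Peak{H_i}}$ in $L_{i-1}$, before $I$ was created, is). The correct reason $\Body J$ lands in $\LHOT\pp$ rather than merely in $\OT\pp$ is again the partition argument: the only trajectory cells containing $\Peak\pp$ are the one or two non-rectangular ones whose precipitous sides are neon tubes of $I\neq J$ and whose interiors acquire no new elements or edges, so the edge segment of $\qq$ must sit in a rectangular interval constituting $\LHOT\pp$. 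A smaller point: in the first half you labor to show $\Body J\subseteq\Body I$, which is false in general and not what is asserted; ``$\Body J\subseteq I$'' is to be read as $\Body J\subseteq\Enl I$, which is what $\rhbody$ requires and what your ``alternative'' route via $\rhbody=\rhlrbody$ already delivers.
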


\begin{proof}
Using the notations of Lemmas \ref{lemma:nmtfrkstZ} and \ref{lemma:mfXzsrjsmG} and letting $I_i:=I$ and $I_j:=J$, we know from \eqref{eq:mlHtntbkNcZcs} that no precipitous edge segment lies in the ideal $\ideal{\Peak{H_i}}$ of $L_{i-1}$.
Hence, $J$ is younger than $I$, that is, $i<j$. There are  two kinds of 4-cells of $L_i$ that lie in $\Enl {I_i}$, understood in $L_i$. First, there are $m_i+1$ non-rectangular 4-cells $C_0$,\dots, $C_{m_i}$ (where $m_i$ is the number of neon tubes of $I=I_i$). Second, there can be some rectangular 4-cells $D_1$,\dots, $D_t$ in $\Enl{I_i}$.   
After the $i$-th multifork extension, which created $I=I_i$, later multifork extensions can cut some of the rectangular 4-cells $D_1$,\dots, $D_t$ into smaller regions, so all we can say that (keeping the earlier notation) $D_1$, \dots, $D_t$ are rectangular intervals of $L$. However, it follows from \eqref{eq:mlHtntbkNcZcs} that no multifork extension cuts the 
the 4-cells $C_0$,\dots, $C_{m_i}$ into smaller regions. Hence, 
\begin{equation}\left.
\parbox{10cm}{$\Enl I$ is \emph{geometrically} partitioned into non-rectangular 4-cells with peaks equal to $\Peak I$ and rectangular intervals with peaks different from $\Peak I$;}
\,\,\right\}
\label{eq:blBsGgzkRdKlr}
\end{equation}
here ``geometrically partitioned'' means that the geometric area of the intersection of any two of the mentioned geometric objects (4-cells and rectangular intervals) is $\emptyset$ or a line segment.
It follows from \eqref{eq:blBsGgzkRdKlr} and $J\neq I$ that an edge segment of a neon tube of $J$ lies in one of the rectangular intervals  $D_1$, \dots, $D_t$. Thus, by Lemma \ref{lemma:hmKntzSth},  $\Body J$ lies entirely in $\Enl I$, as required. Finally, since $(J,I)\in\rhbody$,  Lemma \ref{lemma:vltfLm} yields that $J<I$, as required.  
\revised{This proves the first half of the lemma.} 

\revised{The second half follows in the same way; the only difference is that instead of $m_i+1$,  now $\Enl\pp$ in $L_i$ has one  (if $\pp$ is a boundary neon tube)  or two non-rectangular 4-cells. The proof of Lemma \ref{lemma:hBbrSzspnrJt} is complete.}
\end{proof}

\begin{definition}Let $I$ be a lamp of a slim rectangular lattice $L$.  For a lattice element $x\in L\setminus\set{\Peak I}$ lying on $\LRoof I$ but not on $\LFloor I$, the interval $[x\wedge \Foot I, x]$ is a \emph{left rung} of $I$.
Similarly, the \emph{right rungs} of $I$ are the  intervals of the form $[x\wedge \Foot I, x]$ where $x\in L\setminus\set{\Peak I}$ lies on $\RRoof  I$ but not on $\RFloor I$. The \emph{rungs} of $I$ are its left rungs and right rungs.
\end{definition}

For example, in Figure \ref{fig3}, the rungs of $I_1$ in $L_1$ 
and those of $I_2$ in $L_3$ are indicated by dotted ovals;  $I_2$ in $L_3$ has three left rungs and one right rung. This figure together with the following lemma indicate why we name some intervals associated with the lamp $I$ after the short bars forming the steps of a ladder.

\begin{lemma}\label{lemma:RnlxtG}
 The rungs of a lamp $I$ are non-singleton chain intervals of  normal slopes; the left rungs are of slope $(1,-1)$ while the right rungs are of slope $(1,1)$. Furthermore, $I$ has a left rung if and only if $I$ is not a left boundary lamp. Similarly, $I$ has a right rung if and only if $I$ is not a right boundary lamp.
\end{lemma}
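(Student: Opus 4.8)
The plan is to unwind the definitions and appeal to the structural result \eqref{eq:hwSukthhBf} about how intervals of the form $[\lsupp x,x]$ and $[\rsupp x,x]$ sit inside a \tbdia-diagram. Fix an internal lamp $I$; the boundary-lamp case is immediate since a boundary lamp has no rung on its own side (its floor and roof coincide on that side), and on the other side the argument is the same as for internal lamps. Let $x$ be an element lying on $\LRoof I$ but not on $\LFloor I$, so $x\ne\Peak I$ by assumption. First I would observe that $\LRoof I=[\lsupp{\Peak I},\Peak I]$ and $\LFloor I=[\lsupp{\Foot I},\Foot I]$, so $x\in[\lsupp{\Peak I},\Peak I]$ means $\lsupp x=\lsupp{\Peak I}$ and the edges of $[\lsupp x,x]$ all have slope $(1,1)$. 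The key point is then to identify the left rung $[x\wedge\Foot I,x]$ with the interval $[\rsupp x,x]$, which by \eqref{eq:hwSukthhBf} is a chain with all edges of normal slope $(1,-1)$.

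The heart of the matter is therefore the claim $x\wedge\Foot I=\rsupp x=x\wedge\rcorner L$. One inequality is clear: $x\wedge\Foot I\le x\wedge\rcorner L$ provided $\Foot I\le$ something comparable to $\rcorner L$ — more precisely I would show $x\wedge\Foot I=x\wedge\rsupp{\Foot I}$ using that $x$ lies on $\LRoof I$, hence $x\le\Peak I$ and $\lsupp x=\lsupp{\Peak I}\ge\lsupp{\Foot I}$, so that the ``left parts'' of $x$ and $\Foot I$ are nested; combined with $x\wedge\Foot I=\lsupp{(x\wedge\Foot I)}\vee\rsupp{(x\wedge\Foot I)}$ (every element of an SR lattice is the join of its left and right supports) and a short computation with meets and the fact that $\lsupp{(x\wedge\Foot I)}=\lsupp x\wedge\lsupp{\Foot I}=\lsupp{\Foot I}$, one gets $x\wedge\Foot I\le\rsupp x$. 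The reverse inequality $\rsupp x\le x\wedge\Foot I$ needs that $\rsupp x\le\Foot I$; here I would use that $x$ does \emph{not} lie on $\LFloor I$ to locate $x$ strictly above the floor of $I$, so $\rsupp x$, sitting on $\RBnd L$ at the foot of a $(1,-1)$-chain from $x$, is forced to be $\le\Foot I$ by planarity of the fixed \tbdia-diagram (the $(1,-1)$ line through $x$ meets $\RBnd L$ below where the roof of $I$ meets it, hence below $\rsupp{\Peak I}$, and the body/circumscribed-rectangle geometry of $I$ pins it under $\Foot I$). This gives $[x\wedge\Foot I,x]=[\rsupp x,x]$, a non-singleton chain (non-singleton because $x\notin\LFloor I$ forces $x>\rsupp x$) of slope $(1,-1)$; symmetrically the right rungs are non-singleton $(1,1)$-chains.

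For the final sentence, $I$ has a left rung iff there exists $x$ on $\LRoof I$ but not on $\LFloor I$; since $\Peak I$ lies on both $\LRoof I$ and $\LFloor I$, and $\LRoof I$, $\LFloor I$ are both maximal $(1,1)$-chains from the left boundary up to $\Peak I$ resp.\ $\Foot I$, the two coincide precisely when $\Peak I=\Foot I$ on that side, i.e.\ when $\lsupp{\Foot I}=\lsupp{\Peak I}$ and the chains have equal length — and this happens exactly when $I$ is a left boundary lamp (its neon tube sits on $\LBnd L$, so $\Foot I\in\LBnd L$ and $\LRoof I$ is just $\LFloor I$ extended by the single edge $\pp$, whose top is $\Peak I$, leaving no intermediate $x$). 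Spelling out this last equivalence cleanly — that ``$\LRoof I$ properly extends $\LFloor I$'' is equivalent to ``$I$ is not a left boundary lamp'' — is where I would lean on Lemma \ref{lemma:RnlxtG}'s companion facts and on \eqref{eq:hwSukthhBf}.

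The main obstacle I anticipate is the meet computation $x\wedge\Foot I=\rsupp x$: making rigorous the ``planarity forces $\rsupp x\le\Foot I$'' step requires carefully translating the geometric picture (the $(1,-1)$-ray from $x$ hitting $\RBnd L$ inside $\gideal{\Roof I}$ but below the relevant floor vertex) into a lattice inequality, and one must be careful that $x$ can lie on a rectangular interval spawned by a younger lamp inside $\Enl I$ — though \eqref{eq:blBsGgzkRdKlr} and Lemma \ref{lemma:hmKntzSth} keep such younger structure safely inside $\Enl I$ and do not interfere with the $(1,1)$/$(1,-1)$ chains attached to $x$. Everything else is a routine unwinding of \eqref{eq:hwSukthhBf} and the definitions of $\LRoof I$, $\LFloor I$, and boundary lamp.
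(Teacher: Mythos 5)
Your overall strategy---reducing the slope claim to \eqref{eq:hwSukthhBf} by locating the left rung inside the chain $[\rsupp x,x]$---is the right one; the paper itself disposes of the lemma by citing the main result of \cite{CzGpropmeet} (that is, \eqref{eq:hwSukthhBf}) together with Lemma \ref{lemma:mfXzsrjsmG}. But the identity you call ``the heart of the matter'', namely $x\wedge\Foot I=\rsupp x$, is false, and so is your intermediate claim $x\wedge\Foot I=x\wedge\rsupp{\Foot I}$. Take $L=\Shk 1$ (the seven-element lattice $S_7$), let $I$ be its unique internal lamp, and let $x=\lcorner L$, which lies on $\LRoof I$ but not on $\LFloor I$ and differs from $\Peak I$. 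Then $x\wedge\Foot I$ is the atom below both $x$ and $\Foot I$, whereas $\rsupp x=x\wedge\rcorner L=0$. Geometrically this had to fail: the bottom of a left rung lies on $\LFloor I$, which for an internal lamp is an interior segment of the diagram, while $\rsupp x$ always lies on $\RBnd L$. Accordingly, your ``short computation'' deriving $x\wedge\Foot I\le\rsupp x$ from $x\wedge\Foot I=\lsupp{(x\wedge\Foot I)}\vee\rsupp{(x\wedge\Foot I)}$ cannot be repaired, because the joinand $\lsupp{(x\wedge\Foot I)}=\lsupp{\Foot I}$ sits on the left boundary and is not below $\rsupp x$.

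What you actually need---and all you need---is the one-sided inequality $\rsupp x\le\Foot I$, whence $\rsupp x\le x\wedge\Foot I\le x$; then $[x\wedge\Foot I,x]$ is a subinterval of the chain $[\rsupp x,x]$ and inherits from \eqref{eq:hwSukthhBf} that it is a chain with all edges of slope $(1,-1)$, while $x\notin\LFloor I$ together with $\lsupp{\Foot I}\le\lsupp{\Peak I}\le x$ gives $x\not\le\Foot I$ and hence non-singletonness. That inequality $\rsupp x\le\Foot I$ is exactly the ``planarity pins $\rsupp x$ under $\Foot I$'' step that you yourself flag as the anticipated obstacle, and it still has to be argued (most naturally from the explicit shape of $\Enl I$ at the moment $I$ is born, i.e., Lemma \ref{lemma:mfXzsrjsmG} and the configuration of Figure \ref{fig-16}); so, in addition to the false identity, the proposal leaves its key lattice-theoretic inequality unproved. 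The concluding ``if and only if'' paragraph is essentially right, but it must not ``lean on Lemma \ref{lemma:RnlxtG}'s companion facts'', which is circular; it follows from the fact that the foot of a boundary neon tube lies on the upper boundary, so that on that side the roof is just the floor extended by the single edge up to $\Peak I$, leaving no admissible $x$, whereas for any other lamp the element of $\LRoof I$ just below $\Peak I$ is a lower cover of $\Peak I$ different from $\Foot I$ and thus yields a rung.
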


\begin{proof} The statement follows from 
\revised{the main result of \cite{CzGpropmeet}}\semmi{\eqref{eq:hwSukthhBf}}
 and Lemma \ref{lemma:mfXzsrjsmG}.
\end{proof}

Next, for a lamp $I$ of $L$, let $\rho(I)$ be the equivalence of (the underlying set of) $L$ whose nonsingleton blocks are $I$ and the rungs of $I$. Figure \ref{fig3} with $L=L_3$ and $I=I_2$ witness that $\rho(I)$ need not be a congruence of $L$. However, we have the following lemma.

\begin{lemma}\label{lemma:RnGcN} 
If $I$ is \revised{a \emph{minimal}} lamp of a slim rectangular lattice $L$, 
then $\rho(I)$ defined above is a congruence of $L$ and, furthermore, $\rho(I)=\phi(I)$; see \eqref{eq:sznZtcsTl}.
\end{lemma}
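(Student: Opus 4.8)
The plan is to recall that, by Lemma~\ref{lemma:mfXzsrjsmG}, a minimal lamp $I$ is one of the $I_j$ of the fixed multifork sequence such that no internal lamp of $L$ is strictly below $I$ in $\Lamp L$; equivalently (using Lemma~\ref{lemma:vltfLm} and Lemma~\ref{lemma:hBbrSzspnrJt}), no neon tube of any other internal lamp has an edge segment inside $\Enl I$, and in particular no edge born after the $i$-th multifork extension subdivides the non-rectangular $4$-cells $C_0,\dots,C_{m_i}$ sitting in $\Enl I$. The first step is to use this to give a clean geometric description of $\Body I$, of the rungs of $I$, and of the region between $\Floor I$ and $\Roof I$: since $I$ is minimal, $\CircR I$ in $L$ coincides with $\CircR I$ in $L_i$, the interval $[\Foot I,\Peak I]$ has not acquired extra elements, and the left rungs are exactly the intervals $[x\wedge\Foot I,x]$ for the elements $x$ of $\LRoof I$ strictly below $\Peak I$ and off $\LFloor I$ (Lemma~\ref{lemma:RnlxtG}), with analogous statements on the right.

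Next I would verify directly that $\rho(I)$ is a congruence. The cleanest route is to identify $\rho(I)$ with the congruence $\phi(I)=\con(\Foot I,\Peak I)$ from \eqref{eq:sznZtcsTl} and show the two equivalences have the same blocks. One inclusion, $\rho(I)\subseteq\con(\Foot I,\Peak I)$, is easy: the neon tube $[\Foot\pp,\Peak\pp]$ of a rung collapses because each rung is perspective (up or down, through the chains of normal slopes supplied by the main result of \cite{CzGpropmeet}) to an edge of $I$ that collapses under $\con(\Foot I,\Peak I)$; since rungs are chains by Lemma~\ref{lemma:RnlxtG}, collapsing one edge of a rung collapses the whole rung, and collapsing $(\Foot I,\Peak I)$ collapses the interval $I$. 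For the reverse inclusion I would argue that the partition whose nonsingleton classes are $I$ and the rungs of $I$ is already a congruence: using the description of trajectories and the fact that $\con(\Foot I,\Peak I)$ is the join of the perspectivity-collapses of the top edge's trajectory, the only prime intervals collapsed are the neon tube of $I$, the other neon tubes of $I$ (if $I$ is multi-fold), the edges of $I$, and the edges of the rungs; minimality of $I$ is what prevents the collapse from ``leaking'' into younger lamps. Checking that $\rho(I)$ respects $\vee$ and $\wedge$ can be done 4-cell by 4-cell: by \eqref{eq:blBsGgzkRdKlr} together with minimality, every $4$-cell of $L$ meeting a nonsingleton $\rho(I)$-block is either one of $C_0,\dots,C_{m_i}$ or a rectangular $4$-cell straddling exactly one rung (or disjoint from all blocks), and in each of these finitely many local configurations the substitution property is immediate.

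The main obstacle I anticipate is the reverse inclusion $\con(\Foot I,\Peak I)\subseteq\rho(I)$, i.e.\ proving that $\con(\Foot I,\Peak I)$ does not collapse anything outside $I$ and the rungs of $I$. Here I expect to need the minimality of $I$ essentially: if $I$ were not minimal, the trajectory of $\Peak\pp$ could pass through a non-rectangular $4$-cell of a smaller internal lamp $J$, creating further perspectivities and forcing $\Foot J$ and $\Peak J$ into the same block, which is exactly the failure shown by $L_3$, $I_2$ in Figure~\ref{fig3}. So the heart of the argument is: trace the trajectory of the neon tube of $I$ (and of each of its neon tubes) in $L$ and show, using Lemma~\ref{lemma:hBbrSzspnrJt} and \eqref{eq:ncghnvmgnBZcfDs} applied to a \emph{minimal} $I$, that every $4$-cell it passes through is either inside $I$ or a rectangular $4$-cell contributing a single rung edge, so that the transitive closure of these collapses produces precisely the blocks $I$ and the rungs. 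Once both inclusions are in hand, $\rho(I)=\con(\Foot I,\Peak I)=\phi(I)$ is a congruence, which is the assertion of the lemma.
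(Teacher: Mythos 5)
Your proposal follows essentially the same route as the paper: minimality of $I$ (via Lemmas \ref{lemma:vltfLm} and \ref{lemma:hBbrSzspnrJt}) forces $\Enl I$ to contain no precipitous edge segment, which yields the grid-like rung structure of $\Enl I\setminus\Body I$, and then the Swing Lemma shows that $\rho(I)$ and $\con(\Foot I,\Peak I)$ collapse exactly the same edges, whence they coincide (both having interval blocks) and $\rho(I)$ is the congruence $\phi(I)$. The one point left unaddressed is that a minimal lamp may be a \emph{boundary} lamp (for instance, when $L$ is a grid every lamp is a boundary lamp and is both maximal and minimal in $\Lamp L$), so your opening identification of $I$ with an internal lamp $I_j$ of the multifork sequence via Lemma \ref{lemma:mfXzsrjsmG} does not cover all cases; the paper settles the boundary case in one line from Lemma \ref{lemma:nmtfrkstZ} and the Swing Lemma, and your argument needs the same remark.
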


\begin{figure}[ht] \centerline{ \includegraphics[scale=1.0]{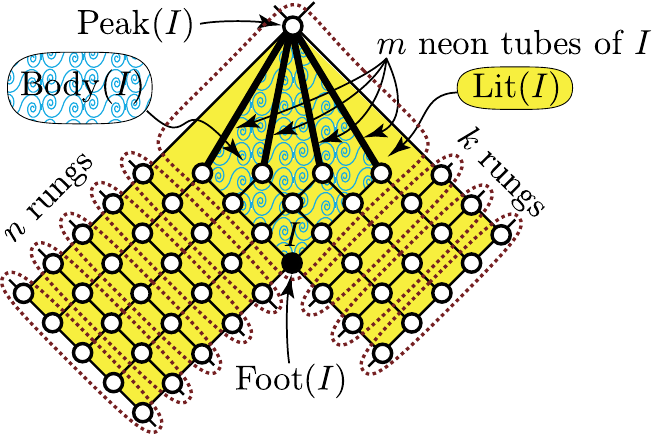}} \caption{Illustrating the proof of   Lemma \ref{lemma:RnGcN} for $(m,n,k)=(4,5,3)$}\label{fig-16}
\end{figure}

\begin{proof} 
First, let us assume that $I$ is an internal lamp.
By Lemmas \ref{lemma:vltfLm} and \ref{lemma:hBbrSzspnrJt}, the minimality of $I$ in $\Lamp L$ \revised{implies} that no precipitous edge segment lies in $\Enl I$. 

Let $x\in L$ lie in $(\Enl I\setminus\Body I)\setminus\Roof I$. Then $x$ is not on the upper boundary of $L$. Furthermore, $x$ is not the foot of an internal (i.e.,  precipitous) neon tube since no precipitous edge segment lies in $\Enl I$. Therefore, since we are in a \tbdia-diagram, $x$ is not an internal meet-irreducible element of $L$. 
It follows from Gr\"atzer and Knapp \cite[Lemma 3]{GKnapp-III}, see also Cz\'edli and Schmidt \cite[(2.14)]{CzGSchT--patchwork} for a more explicit source, that
any meet-irreducible boundary element of $L$ lies on the upper boundary of $L$. Hence, $x$ is meet-reducible.
This fact together with either semimodularity and \eqref{eq:mnHhwlCh} or Gr\"atzer \cite[Lemma 8]{GKn-I}  imply that $x$ has exactly two covers. 
By left-right symmetry, we can assume that $x$ lies in $\LEnl I$. Let $x_0:=x$. While $x_i$ still lies in  $(\Enl I\setminus\Body I)\setminus\Roof I$ (and then it has exactly two covers by the same reason as $x$), let $x_{i+1}$ be the left upper cover of $x_i$ and observe that $x_{i+1}$ lies in $\Enl I\setminus\Body I$. In this way, we obtain a chain $[x,y]=[x_0,x_t]$ such that $y\in\LRoof I$.
In the absence of precipitous edge segments lying in $\Enl I$, this chain is an interval of normal slope $(1,-1)$. But the left rung starting at $y$ and going to the southeast is of the same slope, whereby this rung contains $x$.  
Therefore, every lattice element lying in $\Enl I\setminus\Body I$ belongs to a  (unique) rung of $I$.

Let $m$, $n$, and $k$ denote the number of neon tubes of $I$, that of the left rungs of $I$, and that of the right rungs of $I$, respectively; see Figure \ref{fig-16} for an illustration. 
As this figure shows, $\Enl I$ is obtained by gluing an $(n-1)$-by-$m$ grid to the lower left boundary (apart from its bottom 0) of $\Shk m$ and a
$(k-1)$-by-$m$ grid to the lower right boundary (apart from 0) of $\Shk m$.
 Let $\pp$ be an arbitrary edge of a rung of $I$ or an edge lying in $\Body I$ (that is, an edge of $I$), and let $\con(\pp)\in\Con L$ denote the congruence generated by $\pp$. It follows trivially from the (lattice) structure of $\Enl I$, see Figure \ref{fig-16} and the Swing Lemma, which is due to Gr\"atzer \cite{gG15} (see also Cz\'edli, Gr\"atzer, and Lakser \cite{CzGGLakser} or Cz\'edli and Makay \cite{CzGMakay} for alternative approaches) that 
\begin{equation}\text{$\con(p)$ collapses an edge of $L$ if and only if so does $\rho(I)$.}
\label{eq:swzsznnKbc}
\end{equation}
Note that, in a slightly less convenient way, \cite[Theorem 4.4]{CzG:pExttCol} also implies \eqref{eq:swzsznnKbc}. 

Finally, \eqref{eq:swzsznnKbc}  and the fact that both the $\con(p)$-blocks and the $\rho(I)$-blocks are intervals imply that $\rho(I)=\con(p)=\phi(I)$, completing the case when $I$ is an internal lamp. 
The case when $I$ is a boundary lamp follows trivially from Lemma \ref{lemma:nmtfrkstZ} and the Swing Lemma. 
This completes the proof of Lemma \ref{lemma:RnGcN}
\end{proof}

Now we are in the position to prove Theorem \ref{thm:mrKtpnMgsRflpbsmG}.

\begin{figure}[ht] \centerline{ \includegraphics[width=0.99\textwidth]
{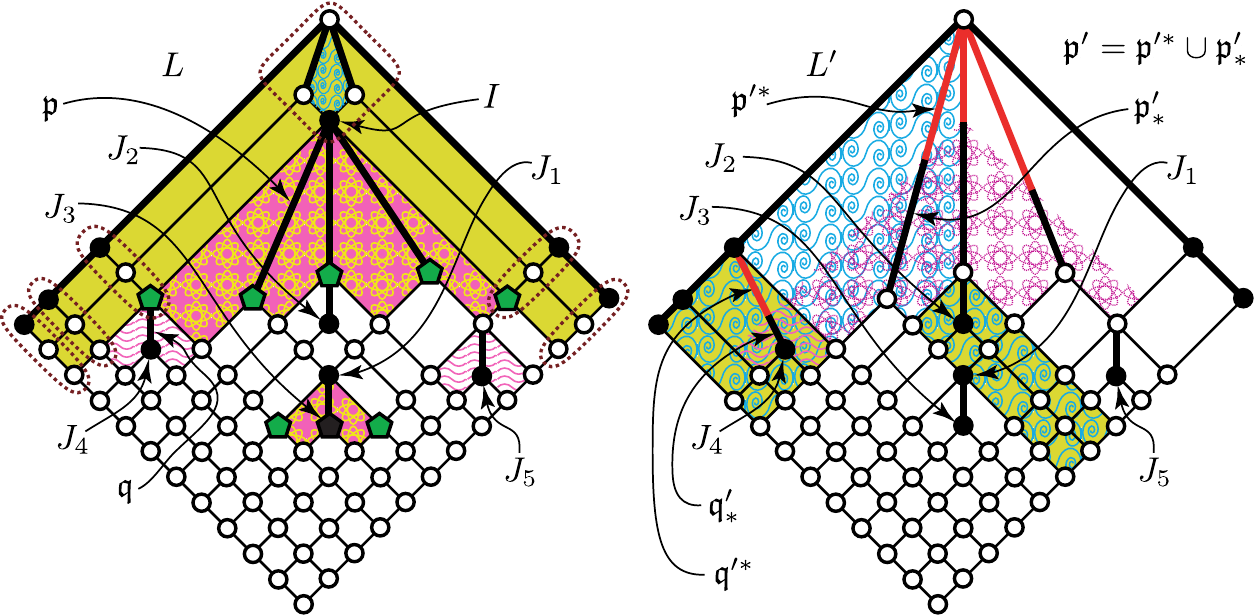}} \caption{$L$ and $L'=L/\alpha$ with $\alpha=\phi(I)$ and internal $I\in\Lamp L$}\label{fig17}
\end{figure}

\begin{figure}[ht] \centerline{ \includegraphics[scale=1.0]{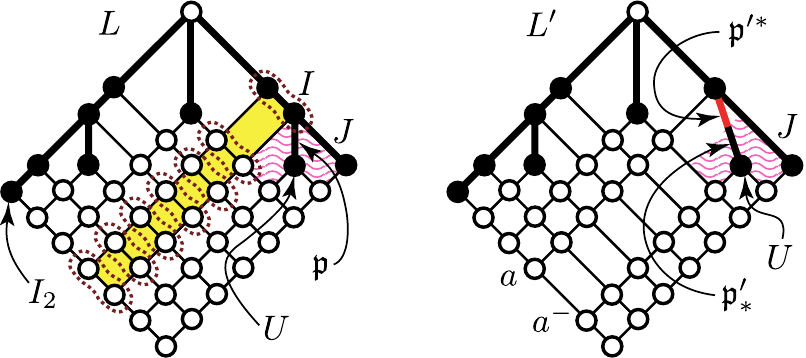}} \caption{$L$ and $L'=L/\alpha$ with $\alpha=\phi(I)$ and boundary $I\in\Lamp L$}\label{fig-case1}
\end{figure}

\begin{proof}[Proof of  Theorem \ref{thm:mrKtpnMgsRflpbsmG}]
Observe the following. Let $\qq$ be an edge,  let $\vec u$ be a vector upwards with a normal or a precipitous slope, and let $\qq'$ be the line segment obtained from $\qq$ so that we move $\Peak \qq$ by $\vec u$ while keeping $\Foot \qq$ unchanged. It is a trivial geometric fact that  
\begin{equation} \text{if $\qq$ is precipitous, then so is $\qq'$.}
\label{eq:mkrThvRsl}
\end{equation}
Vectors of the form $(1,q)$ with $q\in\RR$ such that $|q|<1$ and lines parallel to such vectors are said to be of a \emph{slight slope}. In other words, a line (segment) is of a slight slope if and only if it is neither precipitous nor of a normal slope. 
Based on the concept given in \eqref{eq:tPgscsPsTz}, we need the following (easy geometric) consequence of \eqref{eq:ncghnvmgnBZcfDs}. 
\begin{equation}\left.
\parbox{8cm}{The line through two different pegs of an internal lamp is of a slight slope.}
\,\,\right\}
\label{eq:chVzkRkpmRk}
\end{equation}

First, we only deal with the particular case of Theorem \ref{thm:mrKtpnMgsRflpbsmG} when $\alpha$ is an atom of $\Con L$. Then $\alpha\in\Jir{\Con L}$, so we obtain by Lemma \ref{lemma:vltfLm} that $\alpha$ is of the form $\phi(I)$ for a minimal lamp $I$ of $L$. Hence, Lemmas \ref{lemma:RnlxtG} and \ref{lemma:RnGcN} apply for the non-singleton $\alpha$-blocks: they are $I$ and the rungs of $I$, and these rungs are of normal slopes. 

Let $L'=L/\alpha$, see Figures \ref{fig17} and \ref{fig-case1} (but mainly Figure \ref{fig17}), and let $D'$ be the diagram we obtain from the fixed \tbdia-diagram of $L$ by removing all elements but the largest one from every block of $\alpha$. That is, we remove all lattice elements that lie in $\Enl I\setminus\Roof I$. We need to show that $D'$ is a \tbdia-diagram of $L'$.  (In the two figures just mentioned, $D'$ is \revised{the diagram} on the right.)

The edges of (the fixed \tbdia-diagram of) $L$ fall into three categories; to define \revised{these categories}, let $\pp$ be an edge of $L$.
\begin{itemize}
\item If both $\Foot \pp$ and $\Peak \pp$ are the largest elements in their $\alpha$-blocks (in particular, if these blocks are singletons), then $\pp$ is a \emph{permanent edge} and we let $\pp':=\pp$, which is an edge of $D'$.
\item If $\Foot \pp$ is the largest element of its $\alpha$-block but $\Peak\pp$ is not, then $\pp$ is a \emph{changing edge} and we define the edge $\pp'$ of $D'$ by letting $\Peak {\pp'}$ be the largest element of the $\alpha$-block $\Peak\pp/\alpha$ and $\Foot{\pp'}:=\Foot\pp$.
\item If $\Foot \pp$ is not the largest element of its $\alpha$-block, then $\pp$ is a \emph{vanishing edge} and $\pp'$ is undefined. 
\end{itemize}
The edges of $D'$ are the $\pp'$ for changing or permanent edges $\pp$ of $L$. Since planarity excludes that an edge of $L$ ``jumps over'' $\Floor I$, we have that
\begin{equation}\left.
\parbox{9cm}{an edge $\pp$ of $L$ is a changing edge $\iff$ $\Peak\pp$ lies on $\Floor I$ and $\Foot\pp$ lies in $\gideal{\Floor I}$ but not on $\Floor I$.}
\,\,\right\}
\label{eq:mstrbRdnKcsbp}
\end{equation}
For a changing edge $\pp$,   $\pp'$ is geometrically  partitioned into two edge segments,
\begin{equation}
\lo{\pp'}:=\pp'\cap\gideal{\Floor I}\quad\text{ and }\quad
\hi{\pp'}:=\pp'\setminus\lo{\pp'}.
\label{eq:ksnKmrhgkSLt}
\end{equation}
For some changing edges, including $\pp$ and $\qq$,  of $L'$, Figures \ref{fig17} and \ref{fig-case1} show the concepts defined in \eqref{eq:ksnKmrhgkSLt}. Namely, $\hi{\pp'}$ is drawn in red while $\lo{\pp'}$ remains black. For $K\in\Lamp L$ having a changing neon tube, the fill pattern of $\UHCircR K$ is repeated in $L'$ in the two figures. Note that \eqref{eq:ksnKmrhgkSLt} is  meaningful for a permanent edge $\pp$, too, but then $\lo{\pp'}=\pp$ and $\hi{\pp'}=\emptyset$.
Next, we claim that 
\begin{equation}\left.
\parbox{10.5cm}{for any non-vanishing edge $\pp$ of $L$, if $\pp$ is of a normal slope, then $\pp'$ is of the \emph{same} normal slope. Also, if $\pp$ is precipitous, then so is  $\pp'$.}\,\,\right\}
\label{eq:gzlSmskLdlbRdlth}
\end{equation}
It suffices to deal with a changing edge $\pp$.  The first case is when  $\Peak\pp$ lies on $\LFloor I\setminus\set{\Foot I}$. Note that the rung $r$ with foot $\Peak\pp$ is of slope $(1,-1)$ by Lemma \ref{lemma:RnlxtG} and 
$r=\Peak\pp/\alpha$ by Lemma \ref{lemma:RnGcN}. By \eqref{eq:mstrbRdnKcsbp}, $\pp$ does not lie on $\LFloor I$, whereby $\pp$ is not of slope $(1,1)$.
If $\pp$ is of slope $(1,-1)$, then we move $\Peak \pp$ along $r$ of the same slope to obtain $\pp'$, whereby $\pp$ and $\pp'$ are of  the same slope,  as required. If $\pp$ is precipitous, then so is $\pp'$ by \eqref{eq:mkrThvRsl},  as required.  The case when $\Peak\pp$ lies on $\RFloor I\setminus\set{\Foot I}$ is analogous.

By  \eqref{eq:mstrbRdnKcsbp}, we are left with the case where  $\Peak\pp=\Foot I$. 
If $I$ is an internal lamp, then it follows easily from Lemmas \ref{lemma:nmtfrkstZ} and \ref{lemma:mfXzsrjsmG}, or simply from the fact that $\Foot I$ lies in $\TopInt{\CircR I}$, that 
the vector $\vec w$ from $\Foot I$ to $\Peak I$ is precipitous. Since $\Floor I$ is $\Lambda$-shaped,  \eqref{eq:mstrbRdnKcsbp} gives that our changing edge $\pp$ is also precipitous. Hence, \eqref{eq:mkrThvRsl} yields that  $\pp'$ is precipitous, as required. 
If $I$ is a boundary lamp, say, a right boundary lamp as in Figure \ref{fig-case1}, then $\vec w$ is of (normal) slope $(-1,1)$. Then \eqref{eq:mstrbRdnKcsbp} excludes that our changing edge $\pp$ is of slope $(1,1)$. So either $\pp$ is of the same (normal) slope $(-1,1)$ as $\vec w$ and $\pp'$ is also of this slope, or $\pp$ is precipitous and \eqref{eq:mkrThvRsl} implies that $\pp'$ is also precipitous. 
We have shown \eqref{eq:gzlSmskLdlbRdlth}.

Next, inspecting its edges, we are going to show that $D'$ is a \tbdia-diagram and  it is a \tbdia-diagram of $L/\alpha$. Note that if no two edges of $D'$ overlap or cross each other and no edge of $D'$ goes through a vertex of $D'$ (different from the endpoints of the edge in question), then $D'$ is a planar diagram and it is the Hasse diagram of $L/\alpha$.

First, we show that for every edge $\pp$ of $L$, the corresponding new edge 
\begin{equation}
\text{$\pp'$ of $D'$ does not go through any vertex of $D'$.}
\label{eq:swkHrmM}
\end{equation}
This is clear if $\pp$ is of a normal slope since then $\lo{\pp'}=\pp$  and  $\hi{\pp'}$ lies in $\Enl I$ but, apart from $\Roof I$, $\Enl I$ contains no vertex of $D'$. Assume that $\pp$ is precipitous; so is $\pp'$ by  \eqref{eq:mstrbRdnKcsbp}.
Since $\Enl I\setminus \Floor I$ contains no vertex of $D'$, 
it is only  $\lo{\pp'}$ that could go through a vertex different from $\Foot{\pp '}=\Foot\pp$. But \eqref{eq:chVzkRkpmRk} excludes this possibility since $\pp'$ is precipitous. We have shown \eqref{eq:swkHrmM}. \eqref{eq:swkHrmM} implies that

\begin{equation}\left.
\parbox{7cm}{no overlapping occurs among the edges of $D'$, whereby $D'$ is a Hasse-diagram of $L/\alpha$.}\,\,\right\}
\label{eq:mhtlPlgBrkdmDj}
\end{equation}

Next, to show the planarity of $D'$, we have to exclude that two edges, \revised{say} $\pp$ and $\qq$, of $D'$ intersect at a geometric point that is not a vertex of $D'$. In view of \eqref{eq:mhtlPlgBrkdmDj}, we can assume that $|\set{\Foot \pp,\Peak\pp,\Foot\qq,\Peak\qq}|=4$.
There are three cases; they are easy and, after the first case, some trivial details will be omitted.

\begin{case}
Let $\pp$ and $\qq$ be precipitous changing edges. By the ``$=4$ assumption'',
they are neon tubes of different lamps $U$ and $V$, respectively. These two lamps are internal since $\pp$ and $\qq$ are precipitous. 
By definition,  an internal lamp is determined by its peak. Hence $\Peak \pp=\Peak U\neq\Peak V=\Peak \qq$.
If $\Peak U$ and  $\Peak V$ lie on $\LFloor I$ and, say, $\Peak U>\Peak V$, then the line through $\RRoof V$ separates not only $\pp$ from $\qq$ but also $\pp'$ from $\qq'$; see Figure \ref{fig17} with $(U,V):=(J_1,J_4)$. Hence, $\pp'$ and $\qq'$ do not cross each other. 
The situation when $\Peak U$ and $\Peak V$ lie on $\RFloor I$ is analogous.

So assume that $\Peak U=\Peak\pp$ lies on $\LFloor I\setminus \RFloor I$  and  $\Peak V=\Peak\qq$ lies on $\RFloor I\setminus\LFloor I$. Then the upper left edge of $\UHCircR U$, which is the upper left side of $\CircR U$ and it is of normal slope by \eqref{eq:ncghnvmgnBZcfDs}, lies on $\LFloor I$ but does not contain $\Foot I$. Similarly, the upper right edge of $\UHCircR V$ lies on $\RFloor I\setminus\set{\Foot I}$. By planarity and \eqref{eq:ncghnvmgnBZcfDs}, 
\begin{equation}
|\UHCircR U\cap \UHCircR V|\subseteq\set{\rcorner{\CircR U}}\cap \set{\lcorner{\CircR V}} .
\label{eq:szCstkbzVlg}
\end{equation}
In particular,  the intersection of these two geometric areas contains at most one geometric point.   \eqref{eq:mkrThvRsl} and \eqref{eq:chVzkRkpmRk} imply that 
\begin{equation}\left.
\parbox{8cm}{for the neon tube $\pp$ of our internal lamp $U$, $\lo{\pp'}$ lies in $\UHCircR U\setminus \set{\lcorner{\CircR U}, \rcorner{\CircR U}}$.}\,\,\right\}
\label{eq:nZnkFrBgMjs}
\end{equation}
Since the same holds for $(\qq,V)$ in place of $(\pp,U)$, \eqref{eq:szCstkbzVlg} implies that $\lo{\pp'}$ and $\lo{\qq'}$ do not intersect. Neither do $\hi{\pp'}$ and $\hi{\qq'}$ since $\hi{\pp'}$ lies in $\LEnl I\setminus\Body I$ while 
$\hi{\qq'}$ lies in $\REnl I\setminus\Body I$. Therefore, $\pp'$ and $\qq'$ do not cross each other, as required. 
\end{case}

\begin{case} Assume that $\pp$ is a precipitous changing edge and $\qq$ is a non-vanishing edge of a normal slope. Then \eqref{eq:gzlSmskLdlbRdlth} and the fact that the vanishing edges have been removed imply in a straightforward way that $\pp'$ and $\qq'$ do not cross each other. 
\end{case}

\begin{case} If $\pp$ and $\qq$ are non-vanishing edges of normal slopes, then $\pp=\lo{\pp'}$ and $\qq=\lo{\qq'}$ imply easily that $\pp'$ and $\qq'$ do not intersect each other. 
\end{case}

Now that the possible cases have been considered, $D'$ is indeed a planar diagram of $L'$. \revised{It follows easily from the minimality of $I$ and Lemmas \ref{lemma:nmtfrkstZ} and \ref{lemma:mfXzsrjsmG} that} 
\begin{equation}\left.
\parbox{8.3cm}{if $I$ is a boundary lamp such that $\Peak I=\Peak L$ and $\Foot I\in\set{\lcorner L,\rcorner L}$,}\,\,\right\}
\label{eq:mrRgdfZklVs}
\end{equation} 
\revised{then $L$ is the direct product of a finite chain and the two-element chain, $L'$ is a chain, and $D'$ is a \tbdia-diagram of $L'$}. 

Hence, in the rest of the proof, we assume that \eqref{eq:mrRgdfZklVs} fails. We claim that $L'$ is a slim rectangular lattice; in view of Observation \ref{obs:djbvmfsHgrG}, it suffices to show that $L'$ is not a chain. If $L'$ was a chain, then $\Peak L/\alpha=(\lcorner L\vee \rcorner L)/\alpha= (\lcorner L/\alpha)\vee (\rcorner L/\alpha)$ together with the comparability of 
$\lcorner L/\alpha$ and $\rcorner L/\alpha$ would imply that 
$\Peak L/\alpha\in\set{\lcorner L/\alpha,\rcorner L/\alpha}$, that is, at least one of $(\Peak L,\lcorner L)$ and $(\Peak L,\rcorner L)$ would belong to $\alpha=\phi(I)$. But this would contradict, say, the Swing Lemma. Therefore, $L'$ is \revised{a} slim rectangular lattice indeed.

\revised{Letting $I_2$ play the role of $I$, Figure \ref{fig-case1} shows that} if
\begin{equation}
\text{$I$ is a boundary lamp such that $\Foot I\in\set{\lcorner L,\rcorner L}$,}
\label{eq:sMkrshnDkvrMpdlC}
\end{equation} 
then we obtain $D'$ from $D$ by removing the lower left boundary chain or the lower right boundary chain and $D'$ is clearly a \tbdia-diagram of $L'$. Thus, in the rest of the proof, we assume that \eqref{eq:sMkrshnDkvrMpdlC} fails.

Next, observe that every boundary element of $D'$ with exactly one cover is a boundary element of $L$ with exactly one cover.  
The boundary edges of $D'$ are of normal slopes, and any internal (that is, non-boundary) element of $D'$ is an internal element of $L$; see Figures \ref{fig17} and \ref{fig-case1}. Thus, based on
Definition \ref{def:SRdiagram} (of \tbdia-diagrams) and  \eqref{eq:gzlSmskLdlbRdlth}, it suffices to show that for every internal lattice element $y$ of $D'$,  
\begin{equation}\left.
\parbox{7cm}{$y$ is the foot of exactly one edge in $D'$ if and only if
$y$ is the foot of exactly one edge in $L$.}
\,\,\right\}
\label{eq:lNwhddSmgRzh}
\end{equation}
But this is trivial by the definition of $D'$; indeed, if a lattice element $y$ is not omitted and $\rr$ is an edge of $L$ with $\Foot \rr=y$, then $\rr$ is permanent or changing but not vanishing. Thus, for a lattice element $y\in D'$, the number of edges with foot equal to $y$ is the same in $D'$ as in $L$, implying \eqref{eq:lNwhddSmgRzh}. This completes the first case where $\alpha\in\Con L$ is an atom.

Second, we prove the general case by induction on the \emph{height $\height\alpha$} of $\alpha$ in $\Con L$.  Since $\Con L$ is distributive, $\height \alpha$ is the length (that is, the number of edges) of any maximal chain in the interval $[0,\alpha]$ of $\Con L$. The case $\height\alpha=0$ is trivial since then  $\alpha=\set{(x,x):x\in L}\in\Con L$, $L'\cong L$, and the diagram remains unchanged. If $\height\alpha=1$, then $\alpha$ is an atom in $\Con L$, which has been settled by in first case.

So we assume that $\height\alpha>1$. Let $\beta\in \Con L$ be a lower cover of $\alpha$. Then $\height\beta=\height\alpha-1$. Let $L^\sharp$ stand for the fixed \tbdia-diagram of $L$. 
We write $L/\alpha$ in the form $L/\alpha=\set{A, A', A'',\dots}$, where $A, A', A'', \dots \subseteq L$ are the blocks of $\alpha$. Each $\alpha$-block is the union of the $\beta$-blocks included in it. Let $B_1,B_2,\dots, B_{n}$ be the $\beta$-blocks included in $A$, let $B'_1,B'_2,\dots, B'_{n'}$ be the $\beta$-blocks included in $A'$,  let $B''_1,B''_2,\dots, B''_{n''}$ be the $\beta$-blocks included in $A''$, etc.. 
For each of these blocks of $\alpha$ or $\beta$, the join of all elements of a block $X$ is denoted by the corresponding lower case letter $x$. Note the rule $x\in X$, which follows from the well-known fact that congruence blocks are convex sublattices. For example, $a:=\bigvee A\in A$, $b_1'':=\bigvee B_1''$, etc.. 
By the induction hypothesis, the subset $U:= \set{b_1, \dots, b_{n}, \, b'_1, \dots, b'_{n'},\, b''_1, \dots, b''_{n''}, \dots }$ of $L$ determines a subdiagram $U^\sharp$ of $L^\sharp$  and this subdiagram is a  \tbdia-diagram of $L/\beta$. In the diagram $U^\sharp$, the blocks of $\alpha/\beta$ are $\set{b_1,\dots, b_{n}}$, $\set{b'_1,\dots, b'_{n'}}$, $\set{b''_1,\dots, b''_{n''}}$, etc. 
Define $d:=b_1\vee  \dots \vee b_{n}$, $d':=b'_1\vee  \dots \vee b'_{n'}$, $d'':=b''_1\vee  \dots \vee b''_{n''}$, etc..
The Correspondence Theorem, see Burris and Sankappanavar \cite[Theorem 6.20]{burrissank}, and $\beta\prec \alpha$ give that $\alpha/\beta$ is an atom in $\Con(L/\beta)$. Hence the first case (dealing with atoms) and the fact that $U^\sharp$ is a \tbdia-diagram of $L/\beta$ yield that the subdiagram $V^\sharp$ of $U^\sharp$ determined by the subset  $V:=\set{d, d', d'',\dots}$ of $U$ is a \tbdia-diagram of $(L/\beta)/(\alpha/\beta)$. Hence, using that $(L/\beta)/(\alpha/\beta)\cong L/\alpha$ by the Second Isomorphism Theorem, see Burris and Sankappanavar \cite[Theorem 6.15]{burrissank}, it follows that $V^\sharp$ is a \tbdia-diagram of $L/\alpha$. 
Since the relation ``is a subdiagram of'' is transitive, $V^\sharp$ is a subdiagram of $L^\sharp$. So, to complete the proof, it suffices to show that $\set{a, a',a'',\dots}$ is the same as  $V:=\set{d, d', d'',\dots}$. But this is trivial since
\[
a=\bigvee A =\bigvee(B_1\cup \dots\cup B_{n})=\bigvee B_1 \vee\dots\vee \bigvee B_{n}=b_1\vee\dots\vee b_n=d
\]
and, similarly, $a'=d'$, $a''=d''$, etc.. The proof of Theorem \ref{thm:mrKtpnMgsRflpbsmG} is complete.
\end{proof}

\section{Inserting a multifork more generally and thrusting a lamp, a multifork, or a ladder}\label{sect:thrust}
In this section, we modify a construction and introduce another one. They could be of separate interest and \revised{we will need them} in the proof of Theorem \ref{thm:hbhdlgnGtmDzrd}. Note that $L$ is still a slim rectangular lattice and Convention \ref{conv:tbdD} applies. 
It follows from  Cz\'edli and Schmidt \cite[Lemma 15]{CzgScht-a-visual} or from Lemma \ref{lemma:nmtfrkstZ} (here) that a $4$-cell $H$ of $L$ is distributive if and only if $\gideal{\Roof H}$ contains no precipitous edge segment. This explains the terminology introduced in the following definition.

\begin{definition} We say that an interval $I$ of our slim rectangular lattice $L$ is \emph{\tEnl-distributive} if no precipitous edge segment lies in $\Enl I$.
\end{definition}

\revised{It is clear (by Lemma \ref{lemma:nmtfrkstZ}) that} if $I$ is \tEnl-distributive, then it is a chain of normal slope or a rectangular interval.
Implicitly,  the \tEnl-distributivity rather than the  distributivity of 4-cells was used in earlier papers. In particular, the definition of a multifork extension, which we recalled here between \eqref{eq:hwSukthhBf} and Lemma \ref{lemma:nmtfrkstZ}, remains valid mutatis mutandis for the case where the 4-cell $H$ is only \tEnl-distributive.  Thus, Lemmas \ref{lemma:hmKntzSth}, \ref{lemma:hBbrSzspnrJt},
 and \ref{lemma:RnGcN}, the proofs in earlier papers,
and mainly Theorem \ref{thm:mrKtpnMgsRflpbsmG} and Lemma \ref{lemma:RnlxtG} imply the following observation.

\begin{observation}\label{obs:wZrkdvrHlH} 
Lemmas \ref{lemma:nmtfrkstZ} and \ref{lemma:mfXzsrjsmG} remain valid if we replace ``distributive'' by ``\tEnl-distributive''. Furthermore, if 
$L$ is a slim rectangular lattice with a \tbdia-diagram $L^\sharp$,
$H$ is a \tEnl-distributive 4-cell of $L$,
and $L'$ and $L'^\sharp$ are obtained from $L$ and $L^\sharp$ by inserting a multifork at $H$, then $L$ is a sublattice of $L'$,  the just-inserted multifork determines a minimal lamp $I$ of $L'$,  $L\cong L'/\phi(I)$ where $\phi(I)$ is defined in \eqref{eq:sznZtcsTl}, and $L^\sharp$ is the quotient diagram  $L'^\sharp/\phi(I)$.
\end{observation}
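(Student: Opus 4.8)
The plan is to prove Observation \ref{obs:wZrkdvrHlH} in two layers: first upgrade the earlier machinery from ``distributive'' to ``\tEnl-distributive'', and then verify the four concrete claims about the single-multifork extension by recognizing them as instances of results already proved in this paper. For the first layer, I would recall the equivalence, noted just before the definition of \tEnl-distributivity, that a $4$-cell $H$ is distributive if and only if $\gideal{\Roof H}$ contains no precipitous edge segment, and observe via \eqref{eq:mlHtntbkNcZcs} that for a $4$-cell the condition ``no precipitous edge segment in $\gideal{\Roof H}$'' is exactly ``no precipitous edge segment in $\Enl H$'' up to what lies strictly below $\Floor H$ — but since $H$ is a $4$-cell, $\Foot H$ is join-irreducible-ish and the only precipitous edges that could matter are those whose foot is a meet-irreducible element inside $\gideal{\Roof H}\setminus\gfilter{\Floor H}$. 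Actually the cleaner route: a $4$-cell that is \tEnl-distributive is automatically distributive, because $\Enl H \supseteq \gideal{\Roof H}$, so the new notion is in principle weaker; conversely I would check that every distributive $4$-cell used as a multifork site in Lemma \ref{lemma:nmtfrkstZ} is in fact \tEnl-distributive in the $L_{i-1}$ where it is used — but this may fail, so instead the right statement is simply that the multifork construction, its proof in \cite{CzG:pExttCol}, and Lemmas \ref{lemma:hmKntzSth}, \ref{lemma:hBbrSzspnrJt}, \ref{lemma:RnGcN} only ever used the absence of precipitous edge segments in $\Enl H$, never anything strictly stronger. So I would go through those proofs and point out that each invocation of ``distributive'' can be read as ``\tEnl-distributive'' verbatim; this gives the first sentence of the Observation and, in particular, the \tEnl-distributive version of Lemma \ref{lemma:mfXzsrjsmG}.

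For the second layer, fix a \tEnl-distributive $4$-cell $H$ of $L$ and let $L'$, $L'^\sharp$ be obtained by inserting a multifork at $H$. I would argue the four assertions in turn. That $L$ is a sublattice of $L'$: the multifork construction only inserts new elements in the geometric interior of $H$ and along the new descending line segments, and the original join/meet operations on elements of $L$ are unchanged — this is exactly how the construction is set up in \cite{CzG:pExttCol}, so it is immediate. That the inserted multifork determines a \emph{minimal} lamp $I$ of $L'$: by the \tEnl-distributive version of Lemma \ref{lemma:mfXzsrjsmG}, applying the multifork-sequence machinery to $L'$ with the final multifork being the one at $H$, the lamp $I_k$ born at the last step has $\CircR{I_k}$ equal to the region $H$ in $L_{k-1}=L$; and since $H$ is \tEnl-distributive, no precipitous edge segment lies in $\Enl I$ before or after its birth, so by Lemmas \ref{lemma:vltfLm} and \ref{lemma:hBbrSzspnrJt} (in their \tEnl-distributive form) nothing lies strictly below $I$ in $\Lamp{L'}$, i.e.\ $I$ is minimal.

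For the isomorphism $L\cong L'/\phi(I)$ and the diagram identity $L^\sharp = L'^\sharp/\phi(I)$: this is where Lemma \ref{lemma:RnGcN} and Theorem \ref{thm:mrKtpnMgsRflpbsmG} do the work. Since $I$ is a minimal lamp of $L'$, Lemma \ref{lemma:RnGcN} gives $\phi(I)=\rho(I)$, so the non-singleton blocks of $\phi(I)$ are exactly $I$ and the rungs of $I$; by Lemma \ref{lemma:RnlxtG} those rungs are chains of normal slope. The largest element of each such block is: the peak $\Peak I$ for the block $I$ itself, and for each rung $[x\wedge\Foot I, x]$ the element $x$, which lies on $\Roof I$ — and these $\Roof I$-elements, together with everything outside $\Enl I$, are precisely the elements of $L$ inside $H$ before the multifork was inserted (the inserted new elements all lie in $\Enl I\setminus\Roof I$, by construction of the multifork). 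Hence the set of largest elements of $\phi(I)$-blocks is exactly the underlying set of $L$, so $L'/\phi(I)$ has the same universe as $L$ and, being a quotient, the same order, giving $L\cong L'/\phi(I)$. Finally, Theorem \ref{thm:mrKtpnMgsRflpbsmG} applied to the slim rectangular lattice $L'$ and $\alpha=\phi(I)$ says that the quotient diagram $L'^\sharp/\phi(I)$ — the subdiagram on those largest block-elements, in their original positions — is a \tbdia-diagram of $L'/\phi(I)\cong L$; since a slim rectangular lattice has a unique \tbdia-diagram up to left-right reflection and scaling, and $L^\sharp$ was fixed to be the diagram from which $L'^\sharp$ was built (so positions match), $L'^\sharp/\phi(I)$ is literally $L^\sharp$. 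I expect the main obstacle to be the bookkeeping in this last step: making rigorous that ``the largest elements of the $\phi(I)$-blocks'' coincide, as a planar point set with their positions, with the vertices of the original $L^\sharp$ inside $H$, rather than with some reshuffled copy — this requires carefully tracking the geometry of the multifork insertion (new elements strictly inside $\Enl I\setminus\Roof I$, old elements untouched) and matching it against Definition \ref{def:qtnbdZcmRwhzm} of the quotient diagram.
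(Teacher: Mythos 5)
Your proposal is correct and follows essentially the same route as the paper, which proves this observation only by the remark preceding it: the earlier constructions and Lemmas \ref{lemma:hmKntzSth}, \ref{lemma:hBbrSzspnrJt}, \ref{lemma:RnlxtG}, and \ref{lemma:RnGcN} only ever use the absence of precipitous edge segments in $\Enl H$, and then Lemma \ref{lemma:RnGcN} (blocks of $\phi(I)$ are $I$ and its rungs, so the block maxima are exactly the old elements of $L$) together with Theorem \ref{thm:mrKtpnMgsRflpbsmG} yields $L\cong L'/\phi(I)$ and $L^\sharp=L'^\sharp/\phi(I)$. Your only wobble is the inclusion direction between $\Enl H$ and $\gideal{\Roof H}$ (\tEnl-distributivity is meant to be the \emph{weaker} condition, a relaxation of distributivity, with $\Enl H$ the band between $\Floor H$ and $\Roof H$), but you correctly abandon that detour and land on the operative argument.
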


\begin{figure}[ht] \centerline{ \includegraphics[scale=1]{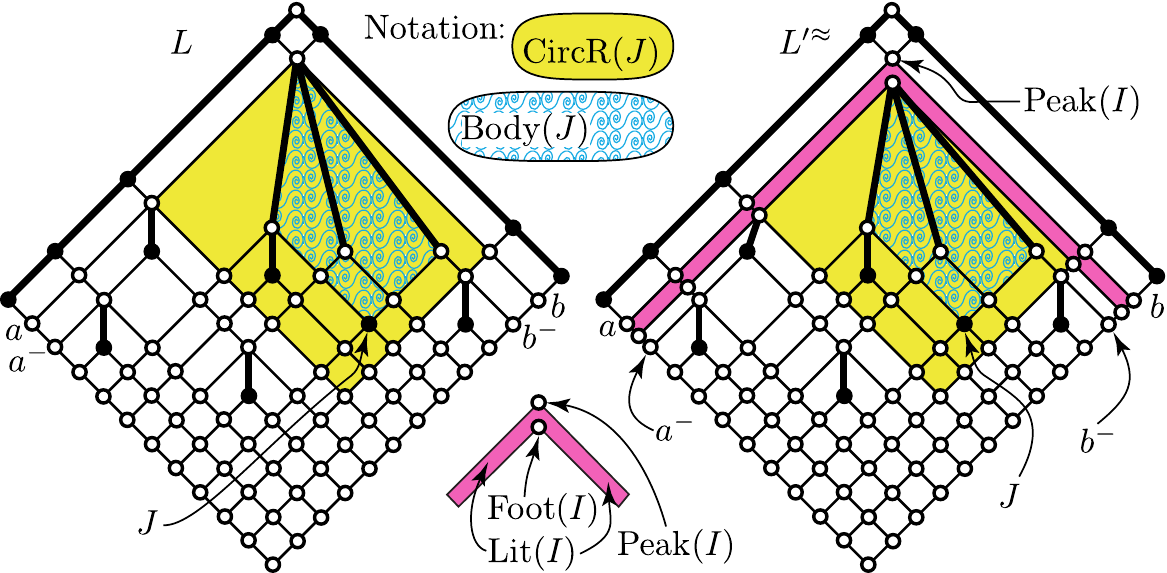}} \caption{En route to thrusting a multifork and a lamp $I$ atop an internal lamp $J$; $L'^{\approx}$ is not a lattice yet}\label{fig-18}
\end{figure}

Roughly saying, the following definition gives the converse of the quotient diagram construction by an atom $\alpha\in\Con L$ at  diagrammatic level. A \emph{ladder} is the direct product of the two-element chain and an at least two-element finite chain.

\begin{figure}[ht] \centerline{ \includegraphics[scale=1]{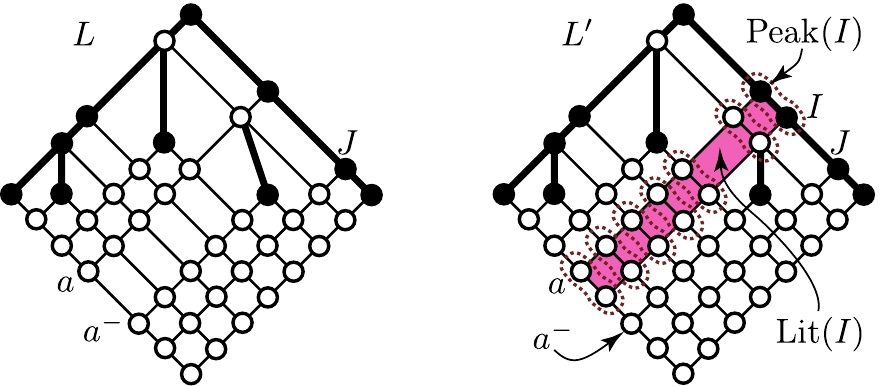}} \caption{Thrusting a lamp atop a boundary lamp $J$}\label{fig-12}
\end{figure}

\begin{definition}[Thrusting a lamp, a multifork, or a ladder]\label{def-stznkmjTvngmkZbw} 
Assume that $J$ is a lamp of a slim rectangular lattice $L$ with a fixed \tbdia-diagram $L^\sharp$. Let  $k\in\Nplu$ if $J$ is an internal lamp, and let $k:=1$ if $J$ is a boundary lamp. We \emph{thrust a $k$-fold new lamp $I$ atop $J$ $($into $L)$} to obtain a new lattice $L'$ and its \tbdia-diagram $L'^\sharp$ as follows; depending on $J$, there are two cases.

\textup{(A)} (Thrusting a lamp $I$ atop an internal lamp $J$).
First, assume also that $J$ is an internal lamp. Then 
the construction is illustrated by Figure \ref{fig-18}; by letting $(L,L',J,I):=(L',L,J_1,I)$, it is also illustrated by Figure \ref{fig17}. (So in Figure \ref{fig17}, the old lattice is $L'$ on the right while $L$ on the left is the new lattice.) 
Let $a:=\lsupp {\Peak J}$ and  $b:=\rsupp {\Peak J}$. Let  $a^-\in \LBnd L$ and $b^-\in \RBnd L$ be the (unique) lower cover of $a$ and that of $b$, respectively. 
Insert a new lattice element into the edge $[a^-,a]$, and draw a line from this new element to the northeast, with (normal) slope $(1,1)$. Similarly, insert a new lattice element into the edge $[b^-,b]$, and draw a line from this new element to the northwest, with (normal) slope $(-1,1)$. 
The intersection point of these two lines will be $\Foot I$. 
The line segments from the two new vertices up to  $\Foot I$ will be  $\LFloor I$ and $\RFloor I$. Change   $\Roof J$ to $\Floor I$. In particular, $\Peak J$ changes to $\Foot I$. The old $\Peak J$ and $\Roof J$ become $\Peak I$ and $\Roof I$, respectively. 
To every vertex $x\neq \Peak I$ lying on  $\LRoof I$
(that is, on the old $\LRoof J$ and so $x\in L$), add a new element $x^-$ lying on  $\LFloor I$ such that $x^-\prec x$ and the edge $[x^-, x]$ is of slope $(1,-1)$. (For $x=a$, this notation is consistent with the previous meaning of $a^-$.) Old edges of the form $[u,x]$ such that $u\notin \LFloor I$ become edges $[u,x^-]$. We act left-right symmetrically on \revised{$\RRoof I$ and} $\RFloor I$. 
Old edges of the form $[u,\Peak{\text{old } J}]$ with $u\notin \Roof {\text{old }J}$ change to  edges  $[u,\Foot I]$ as  $\Peak J$ changes  to $\Foot I$. We have  obtained a poset $L'^{\approx}$ and its diagram, but $L'^{\approx}$ is not a lattice. However,  we have defined $\Peak I$, $\Foot I$, $\Roof I$, and $\Floor I$, and they determine $\Enl I$ and the way how to insert a $k$-fold multifork into $\Enl I$ to turn $L'^\approx$ into a lattice $L'$ and its \tbdia-diagram $L'^\sharp$. We have defined  $L'$,  $L'^\sharp$, and $I$, as required.

\textup{(B)} (Thrusting a lamp $I$ atop a boundary lamp $J$). Let, say, $J$ be a right boundary lamp; the case of a left boundary lamp is analogous by symmetry. The construction is visualized by 
Figure \ref{fig-12}; it is also exemplified by Figure \ref{fig-case1} if we interchange $L$ and $L'$. (These two figures speak for themselves.)
Now the new lattice $L'$ and the new lamp $I\in \Lamp{L'}$ is defined in the same way as for an internal $J$ in Part 
\textup{(A)} apart from the following straightforward modifications. We let  $b':=b$, whence $\Peak I$ 
lies on $\RRoof{\textup{old }J}$ (and also on the upper right boundary of $L'$).  On the left, that is, with $\LRoof I$ and $\LFloor I$, we do the same as in  Part 
\textup{(A)}. But \revised{then we} neglect the instruction to ``act left-right symmetrically on $\RFloor I$'' and, furthermore, 
$L'$ and its diagram is the poset $L'^{\approx}$ and its diagram, respectively.

\textup{(C)} The construction described in \textup{(A)} and that described in \textup{(B)} are also called \emph{thrusting a multifork} and \emph{thrusting a ladder} atop $J$, respectively.  
\end{definition}

If we thrust a lamp $I$ atop an internal lamp $J$, then $I$ becomes an internal lamp. Similarly, if $J$ is a boundary lamp, then so is $I$. \revised{Note} that in $L'$ (which is now the ``old lattice'') on the right of Figure \ref{fig-case1}, $\Peak J=\Peak U$. We can thrust a  lamp $I$ atop $J$ to obtain $L$ (the same figure on the left), and we can also thrust $I$ atop $U$, but then we obtain an entirely different diagram and lattice.

As opposed to Observation \ref{obs:wZrkdvrHlH}, $L$ is not a sublattice of $L'$ described in Definition \ref{def-stznkmjTvngmkZbw}. However, the following lemma is motivated by Observation \ref{obs:wZrkdvrHlH}.

\begin{lemma}\label{lemma:szbzhvszPnDb}
Let $L$, $L^\sharp$, $J$, $k$, $I$, $L'$ and $L'^\sharp$ be as in Definition \ref{def-stznkmjTvngmkZbw}. Then $L\cong L'/\phi(I)$, see \eqref{eq:sznZtcsTl}, and $L^\sharp$ is the quotient diagram  $L'^\sharp/\phi(I)$. Furthermore, 
making no distinction between the old lamps and their possibly modified new variants, $\Lamp{L'}=\Lamp L\mathrel{\dot\cup}\set{I}$ (disjoint union), $\Lamp L$ is a subposet of\, $\Lamp {L'}$,  $I$ is a minimal element of $\Lamp{L'}$, and $I$ has exactly the same covers in $\Lamp {L'}$ as $J$. 
\end{lemma}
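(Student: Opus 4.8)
The plan is to recognise ``thrusting a lamp atop $J$'' as the inverse, at the level of diagrams, of the atom case of the quotient construction carried out in the proof of Theorem \ref{thm:mrKtpnMgsRflpbsmG}, so that collapsing $\phi(I)$ undoes it, and then to read off the assertions about $\Lamp{L'}$ from the isomorphism $L\cong L'/\phi(I)$ together with Lemma \ref{lemma:vltfLm} and Observation \ref{obs:rdltfLtrs}. First I would record that $L'$ really is a slim rectangular lattice, that $L'^\sharp$ is a \tbdia-diagram of it, and that $I$ is a $k$-fold lamp of $L'$ which is internal exactly when $J$ is; this is a matter of inspecting Definition \ref{def-stznkmjTvngmkZbw} (see Figures \ref{fig-18} and \ref{fig-12}): every edge drawn in the process has a normal slope except the $k$ neon tubes created by the inserted multifork, the only new internal meet-irreducible elements are the feet of those neon tubes, the two doubly irreducible elements remain $\lcorner L$ and $\rcorner L$ and are still complementary, so \eqref{eq:mnHhwlCh} applies; the concluding step, inserting a $k$-fold multifork into $\Enl I$, is covered by the \tEnl-distributive version of multifork insertion recorded in Observation \ref{obs:wZrkdvrHlH}. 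The crucial structural feature, visible in the construction, is that in $L'$ the set $\Enl I$ is the union of the multifork $\Body I$ (a copy of $\Shk k$) and the rungs of $I$: by Lemma \ref{lemma:RnlxtG} the rungs are chains of normal slopes, and by \eqref{eq:ncghnvmgnBZcfDs} the only precipitous edges inside $\Body I$ are the $k$ neon tubes of $I$. Hence the only precipitous edge segments lying in $\Enl I$ are the neon tubes of $I$ itself, so if $K\prec I$ in $\Lamp{L'}$ with $K\neq I$, then Lemma \ref{lemma:vltfLm} gives $\Body K\subseteq\Enl I$ with $K$ internal, whence every neon tube of $K$ lies in $\Enl I$ and must be a neon tube of $I$, forcing $K=I$, a contradiction. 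Thus $I$ is a minimal element of $\Lamp{L'}$.

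Next I would settle the first two assertions. Since $I$ is a minimal lamp, Lemma \ref{lemma:RnGcN} yields $\phi(I)=\rho(I)$, whose non-singleton blocks are $I=[\Foot I,\Peak I]$ and the rungs of $I$; the largest element of the block $I$ is $\Peak I$, and the largest element of a rung is its vertex lying on $\Roof I$. Comparing this with Definition \ref{def-stznkmjTvngmkZbw}, the vertices discarded in passing from $L'^\sharp$ to the subdiagram $L'^\sharp/\phi(I)$ — namely $\Foot I$, the interior of $\Body I$, the interiors of the rungs, and the vertices $x^{-}$ on $\Floor I$ — are exactly the elements that the thrusting procedure added to $L$; moreover in $L'^\sharp/\phi(I)$ a changing edge $[u,x^{-}]$ is reconnected to the block-maximum $x$ and the edges running into $\Foot I$ are reconnected to $\Peak I$, which is the old $\Peak J$, so these reconnected edges are precisely the edges of $L^\sharp$ that the construction had replaced. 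Therefore $L'^\sharp/\phi(I)=L^\sharp$ as diagrams. Because $L'$ is slim rectangular and $L'^\sharp$ is a \tbdia-diagram, Theorem \ref{thm:mrKtpnMgsRflpbsmG} shows that $L'^\sharp/\phi(I)$ is a \tbdia-diagram of $L'/\phi(I)$; as it coincides with the \tbdia-diagram $L^\sharp$ of $L$ and a \tbdia-diagram determines its lattice (uniqueness of \tbdia-diagrams, recalled after Convention \ref{conv:tbdD}), we get $L'/\phi(I)\cong L$.

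Finally I would treat the three statements about $\Lamp{L'}$. A lamp is determined by its foot; the thrusting procedure moves only peaks and roofs (in particular $\Foot J$ is unchanged) while $\Foot I$ is a new element, so $\Lamp{L'}=\Lamp L\mathrel{\dot\cup}\set I$ as sets. Using $L\cong L'/\phi(I)$ and the Correspondence Theorem, $\Con L$ is isomorphic to the principal filter $\filter{\phi(I)}$ of $\Con{L'}$; since $I$ is minimal, $\phi(I)$ is an atom of $\Con{L'}$ by Lemma \ref{lemma:vltfLm}, so Observation \ref{obs:rdltfLtrs}, applied with $D:=\Con{L'}$ and $X:=\filter{\phi(I)}$, gives $\Jir{\Con L}\cong\Jir X\cong\psi(X)=\Jir{\Con{L'}}\setminus\set{\phi(I)}$ as posets; transporting along the order isomorphism $\phi$ of Lemma \ref{lemma:vltfLm} and tracking feet through the quotient map $L'\to L'/\phi(I)\cong L$ identifies this with the inclusion $\Lamp L\hookrightarrow\Lamp{L'}$ above, so $\Lamp L$ is a subposet of $\Lamp{L'}$. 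For the cover statement, $\Foot J\in\Enl I$ (it lies below $\Peak I$ in $\gideal{\Roof I}$), so $J<I$ in $\Lamp{L'}$; since $\Body I$ is contained in the region occupied in $L$ by $\Body J$ — in $L'$ that region is the disjoint union of $\Body J$, the rungs of $I$, and $\Body I$ — Lemma \ref{lemma:vltfLm}, the subposet statement just proved, and transitivity show that an old lamp $M$ satisfies $J<M$ in $\Lamp{L'}$ if and only if $I<M$ in $\Lamp{L'}$. Thus $I$ and $J$ have the same strict upper bounds among the old lamps, $I$ is itself an additional strict upper bound of $J$, and $I$ is minimal; a short order-theoretic step then gives that the upper covers of $I$ in $\Lamp{L'}$ are exactly the lamps that were the upper covers of $J$, i.e.\ the upper covers of $J$ in $\Lamp L$, as asserted.

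The main obstacle will be the verification underlying the second paragraph (and its geometric input, the precise description of $\Enl I$ and of the region $\Body J$ inside $L'$): proving rigorously that $L'^\sharp$ is a \tbdia-diagram of a slim rectangular lattice and that, vertex by vertex and edge by edge, $L'^\sharp/\phi(I)$ is literally $L^\sharp$. I expect this to require a careful but unsurprising case analysis following the internal/boundary split of Definition \ref{def-stznkmjTvngmkZbw}; it should use nothing beyond the techniques already employed in the proofs of Lemma \ref{lemma:RnGcN} and Theorem \ref{thm:mrKtpnMgsRflpbsmG}, after which all the remaining claims are formal consequences.
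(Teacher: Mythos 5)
Most of your proposal is sound and runs parallel to what the paper does: the minimality of $I$, the identification $L'^\sharp/\phi(I)=L^\sharp$ via Lemma \ref{lemma:RnGcN} and Theorem \ref{thm:mrKtpnMgsRflpbsmG}, and the set-level statement $\Lamp{L'}=\Lamp L\mathrel{\dot\cup}\set I$ are exactly the ``tedious but straightforward'' parts that the paper dispatches briefly. The genuine gap is in your treatment of the covers --- the one part the paper proves in detail. You assert ``$\Foot J\in\Enl I$ (it lies below $\Peak I$ in $\gideal{\Roof I}$), so $J<I$ in $\Lamp{L'}$.'' This is false: $\Enl I$ is the intersection of $\gideal{\Roof I}$ with $\gfilter{\Floor I}$, and you only checked the first condition. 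In $L'$ the neon tubes of $J$ hang downward from $\Peak J=\Foot I$, which is the apex of $\Floor I$, so $\Foot J$ lies strictly below $\Floor I$ and hence outside $\Enl I$. The conclusion $J<I$ is not only unsupported but contradicts the minimality of $I$ that you established two paragraphs earlier (and, via your own ``short order-theoretic step,'' it would make $I$ an upper cover of $J$ in $\Lamp{L'}$, which is incompatible with the assertion that $I$ and $J$ have the same covers). In fact $I$ and $J$ are incomparable. The auxiliary claim that $\Body I$ is contained in the region formerly occupied by $\Body J$ is also false in general (e.g.\ when $J$ has a single neon tube, the old $\Body J$ is a line segment while $\Body I$ has positive area for $k\geq 2$), so the ``same strict upper bounds among old lamps'' step is not justified as written either.

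The paper's route to the cover statement is different and you do not use it: Lemma \ref{lemma:cVcRtGl} identifies the covers of an internal lamp $X$ as $\Min{\set{\Nwl X,\Nel X}}$, and the thrusting construction puts $\hhl(I)$ and $\hhl(J)$ (the latter being the topmost edge of $\LFloor I$, down-perspective to the former in the leftmost $4$-cell of $\Enl I$) into the same trajectory, whence $\Nwl I=\Nwl J$ and likewise $\Nel I=\Nel J$; the boundary case is handled by noting that boundary lamps are exactly the maximal lamps. Your alternative scheme could be repaired --- show $I\parallel J$ and that an old lamp $M$ satisfies $I<M$ iff $J<M$ (for the direction $I<M\then J<M$ one can use that $\Foot I=\Peak J$ lies in $\TopInt{\Enl{M_1}}$ for a cover $M_1$ of $I$, so an edge segment of a neon tube of $J$ lies there and Lemma \ref{lemma:hBbrSzspnrJt} applies) --- but as written the argument does not establish the last assertion of the lemma.
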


Before the proof of Lemma \ref{lemma:szbzhvszPnDb}, we need another lemma. 
Recall by \eqref{eq:ncghnvmgnBZcfDs} that the upper sides of  \revised{$\CircR W$} of an internal lamp  \revised{$W$} are edges;  
\begin{equation}\left.
\parbox{8.5cm}{let \revised{$\hhl(W)$ and $\hhr(W)$} stand for the upper left edge and the upper right edge of \revised{$\CircR W$}, respectively.}
\,\,\right\}
\label{eq:sWmslKbjjknLmt}
\end{equation} 
For a left boundary lamp $J'$, $\hhl(J')$ is the edge $J'$ itself (that is,  $\hhl(J')$ is the only neon tube of $J'$) and $\hhr(J')$ is undefined. For a right boundary lamp $J''$, $\hhl(J'')$ is undefined and  $\hhr(J'')$ is (the only neon tube of) $J''$. For $U\in\Lamp K$, 
\begin{equation}\left.
\parbox{10.5cm}{let $\Nwl U$  denote the lamp that has the neon tube of the trajectory through $\hhl(U)$, and define $\Nel U$ similarly but using $\hhr(U)$.}\,\,\right\}
\label{eq:wntcskMnctrPl}
\end{equation}
The acronyms used in \eqref{eq:wntcskMnctrPl} come from ``northwest lamp'' and ``northeast lamp''. Note that $\Nwl U=\Nel U$ can occur as well as $\Nwl U \neq \Nel U$.
\revised{For a poset $Q$, let $\Min Q$ stand for the set of minimal elements of $Q$.}

\begin{lemma}\label{lemma:cVcRtGl}\label{rem:twcvrth}
Let $I$ be an internal lamp of a  slim rectangular lattice $L$.
Then the set of covers of $I$ in $\Lamp L$ is \revised{$\Min{\set{\Nwl I,\Nel I}}$}; see \eqref{eq:wntcskMnctrPl}. 
\end{lemma}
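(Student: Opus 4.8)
The statement to be proved is Lemma~\ref{lemma:cVcRtGl}: for an internal lamp $I$ of a slim rectangular lattice $L$, the set of covers of $I$ in $\Lamp L$ equals $\Min{\set{\Nwl I,\Nel I}}$. The plan is to use the multifork sequence of $L$ together with Lemma~\ref{lemma:mfXzsrjsmG}: $I=I_i$ for some $i$, and in $L_{i-1}$ the circumscribed rectangle $\CircR I$ is the geometric region determined by a distributive (equivalently, \tEnl-distributive) 4-cell $H_i$. The two upper edges $\hhl(I)$ and $\hhr(I)$ of $\CircR I$, see \eqref{eq:sWmslKbjjknLmt}, lie on $\Roof H_i$, hence on the roof of the two lamps $\Nwl I$ and $\Nel I$ respectively; already at birth, $\Foot I\in\TopInt{\CircR I}\subseteq\TopInt{\Enl {\Nwl I}}$ and similarly for $\Nel I$ (unless one of them is a boundary lamp, in which case $\Foot I$ still lies in the illuminated set). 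By Definition~\ref{def:rellamps} and Lemma~\ref{lemma:vltfLm}, this gives $I\rhinfoot \Nwl I$ and $I\rhinfoot\Nel I$, so $I<\Nwl I$ and $I<\Nel I$ in $\Lamp L$.

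First I would show that every cover of $I$ lies in $\set{\Nwl I,\Nel I}$. Suppose $I\prec K$ in $\Lamp L$; then by Lemma~\ref{lemma:vltfLm} we have $(I,K)\in\rhinfoot$, i.e.\ $\Foot I\in\TopInt{\Enl K}$. Since $\Foot I\in\TopInt{\CircR I}$ and $\CircR I$ is normally bordered while (by \eqref{eq:ncghnvmgnBZcfDs}) the only precipitous edge segments in $\Enl I$ lie on the neon tubes of $I$, the point $\Foot I$ sits just below the $\Lambda$-shaped roof $\Roof H_i=\hhl(I)\cup\hhr(I)$. If $K$ is a lamp with $\Foot I\in\TopInt{\Enl K}$, then because $\Roof K$ separates $\gfilter{\Roof K}$ from $\gideal{\Roof K}$ and $\Peak I$ must lie in $\filter{\Foot I}$, planarity forces $\Roof K$ to pass through (a point arbitrarily close to) $\Foot I$ from above; hence $\Roof K$ contains an edge segment of $\hhl(I)$ or of $\hhr(I)$, forcing $K\in\set{\Nwl I,\Nel I}$ by the definition \eqref{eq:wntcskMnctrPl} (a neon tube is the unique top edge of its trajectory). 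So the set of covers of $I$ is a subset of $\set{\Nwl I,\Nel I}$.

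Next I would show the reverse: each of $\Nwl I$, $\Nel I$ that is \emph{minimal} in $\set{\Nwl I,\Nel I}$ is actually a cover of $I$. We already know both are strictly above $I$. Suppose, say, $\Nwl I$ is not a cover of $I$; then there is a lamp $K$ with $I<K<\Nwl I$, and we may take $I\prec K$, so by the previous paragraph $K\in\set{\Nwl I,\Nel I}$, hence $K=\Nel I$ and $\Nel I<\Nwl I$. Thus whichever of the two is strictly below the other fails to be minimal, and conversely the minimal one(s) cannot be "covered off" this way. More precisely: if $\Nwl I=\Nel I$ then this single lamp is the unique cover (nothing can sit strictly between $I$ and it, since any intermediate $I\prec K$ would again be in $\set{\Nwl I,\Nel I}=\set{\Nwl I}$, a contradiction), and $\Min{\set{\Nwl I,\Nel I}}=\set{\Nwl I}$ matches. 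If $\Nwl I\neq\Nel I$ and they are incomparable, both are minimal and both are covers (an intermediate lamp below either one would have to be the other, contradicting incomparability). If they are comparable, say $\Nel I<\Nwl I$, then $\Nel I$ is the minimal one and is a cover; and $\Nwl I$ is not a cover because $I<\Nel I<\Nwl I$. In every case the covers of $I$ are exactly $\Min{\set{\Nwl I,\Nel I}}$.

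\textbf{Main obstacle.} The delicate point is the planarity argument in the second paragraph: one must rule out the existence of a cover $K$ of $I$ whose roof does \emph{not} contain a segment of $\hhl(I)$ or $\hhr(I)$ yet still satisfies $\Foot I\in\TopInt{\Enl K}$. This requires using \eqref{eq:ncghnvmgnBZcfDs} carefully — that $\TopInt{\UHCircR I}$, and more generally $\Enl I$ away from $\Floor I$, contains no vertices or foreign edge segments — to pin down that $\Foot I$ lies directly beneath the corner of the $\Lambda$ formed by $\hhl(I)$ and $\hhr(I)$, so that any roof passing above it near $\Foot I$ must run along one of these two edges (a trajectory's top edge, i.e.\ its neon tube, being uniquely determined). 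The boundary-lamp subcases of $\Nwl I$ or $\Nel I$, where $\hhl$ or $\hhr$ is the lamp itself and $\Enl$ has only one non-rectangular 4-cell on that side, should be handled by the same reasoning with the obvious modification, as in the proof of Lemma~\ref{lemma:hBbrSzspnrJt}.
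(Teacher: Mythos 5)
Your overall architecture is sound: you correctly reduce the lemma to the two facts that $I<\Nwl I$, $I<\Nel I$, and that every cover of $I$ lies in $\set{\Nwl I,\Nel I}$, and your final paragraph correctly derives the statement from these. The gap is exactly where you suspect it is, and your argument does not close it. From $I\prec K$ you get $\Foot I\in\TopInt{\Enl K}$, but this only says that $\Foot I$ lies strictly between $\Floor K$ and $\Roof K$; it does not force $\Roof K$ to pass near $\Foot I$, let alone to contain an edge segment of $\hhl(I)$ or $\hhr(I)$. Indeed, $\rhinfoot$ is far from being the covering relation: an old lamp $K$ whose illuminated set contains, deep inside one of its rectangular intervals (the $D_1,\dots,D_t$ of \eqref{eq:blBsGgzkRdKlr}), the $4$-cell at which $I$ was later born satisfies $\Foot I\in\TopInt{\Enl K}$ while $\Roof K$ is nowhere near $\CircR I$. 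Your geometric step never uses the hypothesis that $K$ is a cover, so if it were valid it would prove that every such $K$ belongs to $\set{\Nwl I,\Nel I}$, which is false. Relatedly, your appeal to \eqref{eq:ncghnvmgnBZcfDs} to claim that the only precipitous edge segments in $\Enl I$ are the neon tubes of $I$ is incorrect: \eqref{eq:ncghnvmgnBZcfDs} concerns only $\UHCircR I$, and $\Enl I$ typically contains younger internal lamps with precipitous neon tubes.

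The paper closes this gap combinatorially rather than geometrically, via the Swing Lemma: $\con(\qq)>\con(\pp)$ holds if and only if there is a sequence of prime intervals $\rr_0=\qq,\dots,\rr_n=\pp$ in which each step is a down-perspectivity, a switch between neon tubes of one lamp, or a ``swing'' from $\hhl(K)$ or $\hhr(K)$ to a neon tube of $K$; see \eqref{eq:vnvSksztmgzrTsrgWgj}. Since $\Foot\pp\in\Mir L$ for a neon tube $\pp$ of $I$, no prime interval is down-perspective to $\pp$, so the last term of the sequence that is not a neon tube of $I$ must be $\hhl(I)$ or $\hhr(I)$. Splitting the sequence there shows that \emph{every} $J>I$ (not merely every cover) satisfies $\Nwl I\leq J$ or $\Nel I\leq J$, from which the lemma follows as in your last paragraph. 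To repair your proof you need an argument of this strength; the separation property of $\Roof K$ alone cannot supply it.
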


\begin{figure}[ht] \centerline{ \includegraphics[scale=1.0]{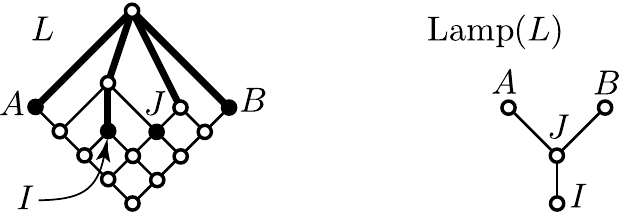}} \caption{\revised{$\Nwl I=A$, $\Nel I=J$, but $I$ has only one cover in $\Lamp L$}}\label{figK2}
\end{figure} 

\revised{
\begin{example} Figure \ref{figK2} indicates that we cannot write $\set{\Nwl I,\Nel I}$ instead of $\Min{\set{\Nwl I,\Nel I}}$ in Lemma \ref{rem:twcvrth}.
\end{example}}

\begin{remark}\label{rem:cSgslVdkRzDrg} 
The Two-cover Theorem from Gr\"atzer \cite{gG-cong-fork-ext} with Lemma \ref{lemma:vltfLm} give that $I$ has at most two covers in $\Lamp L$. Lemma \ref{lemma:cVcRtGl} implies this fact as well as the Two-cover Theorem, and says a bit more by describing the covers of $I$. 
\end{remark}

\begin{proof}[Proof of Lemma \ref{lemma:cVcRtGl}]
Let $\qq$ be a neon tube and  $\pp$ be a prime interval of $L$. Since a neon tube cannot be up-perspective to another prime interval, the Swing Lemma, see Gr\"atzer \cite{gG15} (see also Cz\'edli, Gr\"atzer, and Lakser \cite{CzGGLakser} or Cz\'edli and Makay \cite{CzGMakay} for alternative approaches) in our particular case asserts that 
\begin{equation}\left.
\parbox{10.0cm}{$\con(\qq) > \con(\pp)$ if and only if there is an $n\in\revised{\Nnul}$ and there are pairwise distinct prime intervals $\rr_0=\qq$, $\rr_1$,\dots, $\rr_{n-1}$, $\rr_n=\pp$ such that, for all $i\in\set{1,\dots,n}$,  either $\rr_{i-1}$ and $\rr_i$ are neon tubes of the same lamp,  or $\rr_{i-1}$ is down-perspective to $\rr_i$, or $\rr_{i}$ is a neon tube of a lamp \revised{$K=K(i)$} such that $\rr_{i-1}\in \set{\hhl(K),\hhr(K)}$.}\,\,\right\}
\label{eq:vnvSksztmgzrTsrgWgj}
\end{equation}
For $U\in\Lamp K$, $\con(U):=\con(\Foot U,\Peak U)$. Lemma \ref{lemma:vltfLm} and \eqref{eq:vnvSksztmgzrTsrgWgj} give that for any lamps $U_1,U_2,U_3\in \Lamp L$ and neon tubes $\intv u_i$ of $U_i$, for $i\in\set{1,2,3}$,
\begin{align}
&U_1<U_2\iff \con(\intv u_1)<\con(\intv u_2),
\label{eq:dtTmjfspRskdsNnul}\\
&\con(\intv u_3)=\phi(U_3)= \revised{\con(U_3)}\label{eq:dtTmjfspRskdsNa},\\
&\con(\hhl(U_3))=\con(\Nwl{U_3})=\phi(\Nwl {U_3}),\,\,\text{ and}
\label{eq:dtTmjfspRskdsN}\\
&\con(\hhr({U_3}))=\con(\Nel {U_3})=\phi(\Nel {U_3}).
\label{eq:dtTmjfspRskdsNc}
\end{align}

Fix a neon tube $\pp$  of $I$.
First, assume that $J\in \Lamp L$ such that $I<J$. Let $\qq$ stand for a neon tube of $J$.  Since $\con(\qq)>\con(\pp)$ by  
\eqref{eq:dtTmjfspRskdsNnul},  there is a sequence described in \eqref{eq:vnvSksztmgzrTsrgWgj} from $\qq$ to $\pp$. 
For any neon tube $\pp'$ of $I$, no edge can be down-perspective to $\pp'$ since  $\Foot{\pp'}\in\Mir L$. In particular,  $\rr_{n-1}$ cannot be down-perspective to $\rr_n=\pp$. Hence, letting $j$ denote the largest subscript such that $\rr_j$ is not a neon tube of $I$, it follows that $j<n$ and  $\rr_j\in\set{\hhl(I),\hhr(I)}$. Cutting the sequence into two parts at $\rr_j$ and applying \eqref{eq:vnvSksztmgzrTsrgWgj} to both parts, we obtain that $\con(\qq)\geq \con(\rr_j)>\con(\pp)$. This fact, $\rr_j\in\set{\hhl(I),\hhr(I)}$, and 
\eqref{eq:dtTmjfspRskdsNnul}--\eqref{eq:dtTmjfspRskdsNc} yield that if $I<J$, then $\phi(I)<\phi(\Nwl I)\leq \phi(J)$ or $\phi(I)<\phi(\Nel I)\leq \phi(J)$. But $\phi$ is an order isomorphism, whereby if $I<J$, then $I<\Nwl I\leq J$ or 
$I<\Nel I\leq J$. This implies Lemma \ref{lemma:cVcRtGl} since the inequalities  $I<\Nwl I$ and  $I<\Nel I$ clearly hold by \eqref{eq:vnvSksztmgzrTsrgWgj} and \eqref{eq:dtTmjfspRskdsNnul}.
\end{proof}

\begin{proof}[Proof of Lemma \ref{lemma:szbzhvszPnDb}]
Except for its part about the covers, the  lemma  follows in a straightforward way from definitions and \eqref{eq:mnHhwlCh}; these tedious details will be omitted. In a detailed way, we only deal with the covers of  $I$ and $J$.
First, assume that $J$ is a boundary lamp. So is $I$ \revised{by the paragraph following Definition \ref{def-stznkmjTvngmkZbw}(C)}. By \cite[Lemma 3.2]{CzGlamps} (or by Lemma \ref{lemma:vltfLm} here), the boundary lamps are exactly the maximal elements of $\Lamp L$. Hence, $I$ and $J$ have the same set (the empty set) of covers. 

Second, let $J$ be an internal lamp. By construction, so is $I$. 
It follows from the construction of $L'$ that, in $L'$, 
$\hhl(I)$ and $\hhl(J)$ belong to the same trajectory; see Figure \ref{fig17} (where $L$ and $L'$ are interchanged and $J=J_1$). Therefore, $\Nwl I=\Nwl J$. Similarly, $\Nel I=\Nel J$. Thus, Lemma \ref{lemma:cVcRtGl} implies that $I$ and $J$ have the same covers in $\Lamp{L'}$, completing the proof of Lemma \ref{lemma:szbzhvszPnDb}.
\end{proof}

\section{Proving Theorems \ref{thm:stnKzrzmG} and \ref{thm:hbhdlgnGtmDzrd}}\label{sect:tmprfs} 

\begin{proof}[Proof of Theorem  \ref{thm:hbhdlgnGtmDzrd}]
In view of \revised{Remark \ref{rem:cskGgJzSfHm} and} Lemma \ref{lemma:vltfLm}, we can assume that 
$P\cong\Lamp L$ for  a slim rectangular lattice $L$ and $J\in\Lamp L$ corresponds to $j\in P$.  Convention \ref{conv:tbdD} applies. Thrust a lamp $I$ atop $J$ to obtain $L'$; see Definition \ref{def-stznkmjTvngmkZbw}. It follows from Definition \ref{def:kbClbtsmThkm} and Lemmas \ref{lemma:vltfLm} and \ref{lemma:szbzhvszPnDb} that
\[\Jir{\Con{L'}}\cong \Lamp{L'}\cong \brosum{\Lamp L} J I\cong 
\brosum P j i, \text{ completing the proof.}\qedhere
\]
\end{proof}

\begin{figure}[ht] \centerline{ \includegraphics[scale=1.0]{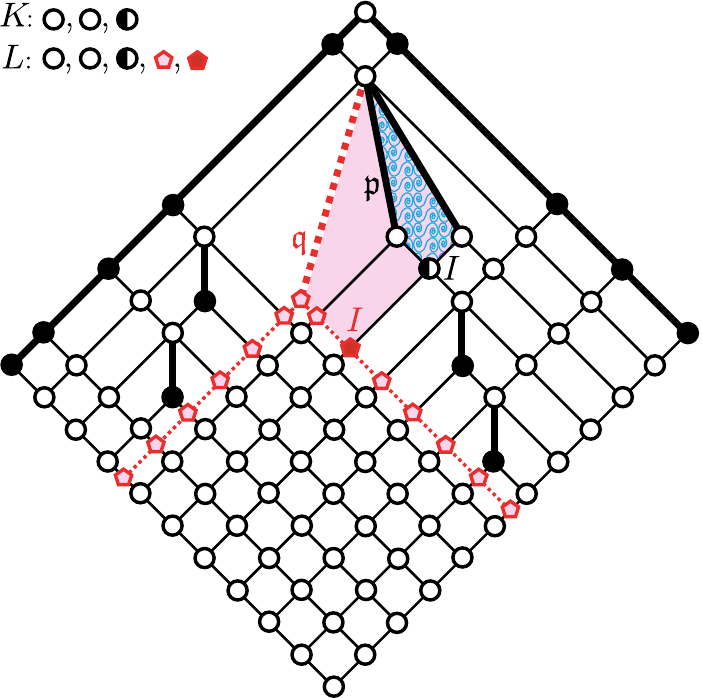}} \caption{Illustrating the proof of  Lemma \ref{lemma:mgntBlht}}\label{fig-int}
\end{figure}

\begin{lemma}\label{lemma:mgntBlht} If $n\in\Nplu$ and $K$ is slim rectangular lattice, then there is a slim rectangular lattice $L$ such that each internal lamp of $L$ (or just one specified internal lamp of $L$) has at least $n$ neon tubes and \ $\Lamp L\cong \Lamp K$.
\end{lemma}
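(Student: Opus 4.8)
The goal is to produce, from an arbitrary slim rectangular lattice $K$, a slim rectangular lattice $L$ with the same lamp poset (hence the same $\Jir{\Con L}$) but in which every internal lamp has at least $n$ neon tubes. The natural strategy is to build $L$ from $K$ by repeatedly applying the ``thrusting a multifork'' construction of Definition \ref{def-stznkmjTvngmkZbw}: each thrust atop a lamp $J$ creates one new lamp $I$ sitting directly below $J$, and by Lemma \ref{lemma:szbzhvszPnDb} the lamp poset only changes by adjoining $I$ as a new minimal element whose covers coincide with those of $J$. So if instead of thrusting an honest \emph{new} lamp we could arrange to enlarge an \emph{existing} lamp's neon-tube count without touching the poset, we would be done; but thrusting always adds a lamp, so the idea is rather to thrust and then observe that the old lamp absorbs extra neon tubes. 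First I would look carefully at what happens to $J$ itself when we thrust a $k$-fold lamp $I$ atop it: in the passage from $L$ to $L'$, the roof of $J$ becomes the floor of $I$, and — as the discussion after Lemma \ref{lemma:mfXzsrjsmG} already notes — the interval $I_j$ of the ``big'' lattice can acquire more elements than it had when born. Dually, thrusting a $k$-fold multifork atop $J$ forces $J$ (now realized with $\Roof J$ sitting inside $\Enl I$) to pick up the rungs that the multifork generates, so $J$ gains neon tubes. Tracking this bookkeeping is the crux.

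**Key steps.** (1) Record the structural facts: by Lemma \ref{lemma:vltfLm} we may replace the poset $\Lamp K$ by $K$ itself and argue entirely with lattices and their \tbdia-diagrams; by Lemma \ref{lemma:szbzhvszPnDb}, thrusting a $k$-fold multifork atop an internal lamp $J$ of a slim rectangular lattice $M$ yields a slim rectangular lattice $M'$ with $\Lamp{M'}=\Lamp M\mathbin{\dot\cup}\set I$, $I$ minimal, $I$ and $J$ having identical covers; in particular $\Lamp{M'}\cong\Lamp M$ is \emph{false} — we gain one element — so a single thrust does not preserve the poset. The repair is step (2): thrust atop a \emph{boundary} lamp. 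But boundary lamps are the maximal elements of $\Lamp M$ and thrusting a ladder atop one again adds a new maximal lamp, so that does not preserve the poset either. Hence the correct approach is (2$'$): start from $K$, and for the single internal lamp (or for each internal lamp) $J$ whose neon-tube count we want to inflate, \emph{first} thrust a lamp $I$ atop $J$, obtaining $L_1$ with $\Lamp{L_1}=\Lamp K\mathbin{\dot\cup}\set I$; now in $L_1$ the neon tubes of the \emph{old} $J$ have multiplied (its roof, which used to sit on the boundary of a $4$-cell, now lies in $\Enl I$ and is cut into $k$ rungs, giving $J$ roughly $k$ neon tubes). Then thrust a lamp $I'$ atop $I$, which inflates $I$'s neon tubes similarly while adjoining $I'$. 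Iterating, after finitely many thrusts the lamp $J$ has $\geq n$ neon tubes and we have added some chain $I, I', I'', \dots$ of new internal lamps below it. Finally, step (3): quotient them all away. By Observation \ref{obs:wZrkdvrHlH} (the \tEnl-distributive version of Lemma \ref{lemma:mfXzsrjsmG}) each thrust is invertible at the diagram level by forming $L'^\sharp/\phi(I')$; but we do \emph{not} want to undo the inflation of $J$. So instead I would iterate thrusts atop the \emph{bottom-most} newly created lamp each time, so that the chain $I\prec I'\prec\cdots$ of new lamps all lies below the region we inflated, and then take $L$ to be the lattice after the last thrust but \emph{truncate}: actually the cleanest formulation is to thrust atop the boundary lamp sitting at $\Peak L$ one extra time so that the final new lamp is doubly irreducible and removable — no, the honest route is simply: perform the thrusts, then note that collapsing by $\phi$ of each added lamp $I^{(t)}$, in reverse order, returns $\Lamp{}$ to $\Lamp K$ by Lemma \ref{lemma:szbzhvszPnDb} while (by Theorem \ref{thm:mrKtpnMgsRflpbsmG}) keeping a \tbdia-diagram; but this also undoes the neon-tube gain. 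The resolution is the one below.

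**The construction that actually works.** For each internal lamp $J$ of $K$ (or just the specified one), fix $k := n$ and thrust a single $k$-fold multifork atop $J$, \emph{then} thrust a $k$-fold multifork atop the just-created lamp $I$, and so on, but arrange the thrusts so that \emph{every} internal lamp of the eventual lattice is inflated. Concretely: enumerate the internal lamps $J_1, \dots, J_r$ of $K$; thrust an $n$-fold multifork atop $J_r$, then atop $J_{r-1}$, \dots, atop $J_1$, then atop $J_r$ again, \dots, cycling; each thrust atop $J_s$ both adds a minimal lamp below $J_s$ \emph{and} (because the new floor lies in the new $\Enl I$, which is a $\Shk n$ with attached grids) turns $\Roof{J_s}$ into $n$ rungs, hence $J_s$ now has $n$ neon tubes. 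After one full cycle of $r$ thrusts we have a slim rectangular lattice $L$ in which every original internal lamp has $\geq n$ neon tubes; the newly added $r$ lamps, being minimal, may themselves have only few neon tubes, so run a \emph{second} cycle of thrusts, this time atop each of those new lamps, and so on. Since each cycle inflates the previously-added lamps and adds new minimal ones, and since we only need finitely many lamps to have $\geq n$ neon tubes\,—\,wait, all of them must. So iterate until stable: after cycle $t+1$ the lamps added in cycle $t$ are inflated, so after $t$ cycles the lamps of $K$ plus the lamps of cycles $1,\dots,t-1$ are all inflated. Take $t$ large. But the lamp poset has grown. The fix: at the very end, collapse the outermost (youngest, minimal) newly added lamps by $\phi$, using Observation \ref{obs:wZrkdvrHlH} and Theorem \ref{thm:mrKtpnMgsRflpbsmG}, \emph{in the order that removes exactly the lamps added after $K$} — and crucially, I claim collapsing $\phi(I)$ for a youngest lamp $I$ does \emph{not} de-inflate the lamp $J$ above it, because $J$'s extra neon tubes are rungs of $J$ itself, visible already in $L_1$, and are not among the elements of $\Enl I$ that get removed (they lie on $\Roof J=\Floor I$, which survives). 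Verifying this non-interference is the main obstacle: one must check that the rungs of $J$ created by thrusting $I$ persist as distinct neon tubes of $J$ after forming $L'^\sharp/\phi(I)$, i.e. that $\phi(I)$ collapses exactly $\Enl I\setminus\Roof I$ (Lemma \ref{lemma:RnGcN}) and that $\Roof I = \Floor J$ of the \emph{next} lamp up does not lie in that set. Once that is confirmed, collapsing all the added lamps in order of decreasing age returns the lamp poset to $\Lamp K$ (each step removes one minimal element by Lemma \ref{lemma:szbzhvszPnDb}) while leaving each internal lamp of the surviving lattice with $\geq n$ neon tubes, and by Theorem \ref{thm:mrKtpnMgsRflpbsmG} the surviving diagram is a \tbdia-diagram of a slim rectangular lattice $L$ with $\Lamp L\cong\Lamp K$, as required.
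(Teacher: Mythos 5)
Your construction does not work, and the failure is at its core mechanism. When you thrust a $k$-fold multifork $I$ atop $J$ (Definition \ref{def-stznkmjTvngmkZbw}(A)), the lamp $J$ does \emph{not} gain any neon tubes: the construction explicitly turns each old neon tube $[u,\Peak J]$ of $J$ into the edge $[u,\Foot I]$, where $\Foot I$ is the new $\Peak J$, so $J$ keeps exactly its old neon tubes, while all $k$ newly created precipitous edges have peak $\Peak I$ and therefore belong to the new lamp $I$. The ``rungs'' you invoke are chains of normal slopes (Lemma \ref{lemma:RnlxtG}), not precipitous edges, so they are nobody's neon tubes. Your fallback --- thrust repeatedly, then collapse the added lamps by $\phi$ at the end and argue ``non-interference'' --- cannot rescue this, because Lemma \ref{lemma:szbzhvszPnDb} states precisely that $L\cong L'/\phi(I)$ and $L^\sharp=L'^\sharp/\phi(I)$: the quotient is the exact inverse of the thrust, so after collapsing you are back where you started, neon-tube counts included. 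A thrust-then-quotient cycle yields no net gain.

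The paper's proof is a short local surgery that your plan circles around but never lands on. To add one neon tube to an internal lamp $I$ with leftmost neon tube $\pp$, insert a \emph{fork} (a $1$-fold multifork; by the last sentence of \cite[Theorem 11]{CzgScht-a-visual} this does not require the $4$-cell to be distributive) into the $4$-cell of $\UHCircR I$ adjacent to $\pp$ on the left, placing the new foot $\Foot\qq$ geometrically close to $\Foot\qq\wedge\Foot\pp$ so that the new top edge $\qq=[\Foot\qq,\Peak I]$ is precipitous and the diagram remains a \tbdia-diagram. Since an internal lamp is determined by its peak and $\Peak\qq=\Peak I$, the new neon tube is absorbed into the \emph{existing} lamp $I$ instead of spawning a new lamp; hence $\Lamp L\cong\Lamp K$ while the neon-tube count of $I$ increases by one. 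Iterating finitely many times over whichever internal lamps need inflating proves the lemma. The idea you were missing is that a fork whose top edge shares its peak with an existing internal lamp enlarges that lamp rather than creating a new one --- which is exactly why the lamp poset is untouched.
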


\begin{proof} It suffices to increase the number of the neon tubes of an internal lamp $I\in\Lamp K$ by 1; see Figure \ref{fig-int}, where $K$ consists of the \revised{black and solid} edges and the circle-shaped vertices drawn in black. In this figure, $\Body I$ is \spiralfil-filled. 
(As opposed to the general convention, to indicate that $\Foot I$ is going to change, $\Foot I$ is only half-filled in the figure.) Let $\pp$ be the leftmost neon tube of $I$.
We obtain $L$ and its diagram by adding a new leftmost lamp $\qq$ to $I$  (by inserting a fork) such that $\Foot \qq$ is (geometrically) very close to $\Foot \qq\wedge \Foot\pp$. The new elements are \revised{red and pentagon-shaped},   the new edges are drawn in red dotted lines, and the body of the new $I$ is pink-filled. By  (the last sentence of) \cite[Theorem 11]{CzgScht-a-visual}, $L'$ is a slim rectangular lattice. It is straightforward to derive from the construction of $L$ and Lemma \ref{lemma:vltfLm} that $\Lamp L\cong\Lamp K$,  proving Lemma \ref{lemma:mgntBlht}.
\end{proof}

\begin{figure}[ht] \centerline{ \includegraphics[scale=1.0]{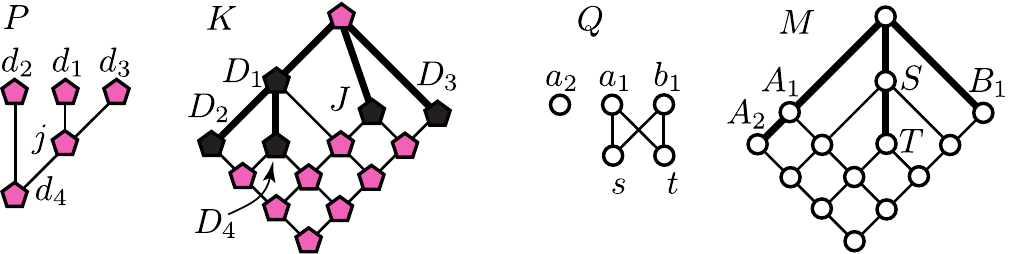}} \caption{Illustrating the proof of \revised{Theorem  \ref{thm:stnKzrzmG}}}\label{fig9}
\end{figure}

\begin{proof}[Proof of Theorem \ref{thm:stnKzrzmG}]
Assume that $P\cong \Lamp K$ and $Q\cong\Lamp M$ for some slim rectangular lattices $K$ and $M$; see Figure \ref{fig9} for an example. Let $j$ be a non-maximal element of $P$, and let $J$ be the corresponding  lamp of $K$. As a rule, 
a lamp corresponding to an element $x$ of $P$ or $Q$ is denoted by the corresponding capital letter $X$.
By \cite[Lemma 3.2]{CzGlamps}, $J$ is an internal lamp. 
Assume that 
\begin{equation}\left.
\parbox{11cm}{the (fixed) multifork sequence for $M$ (in the sense of Lemma \ref{lemma:nmtfrkstZ}) begins with an $f$-by-$g$ grid $M_0$ (which consists of $(f+1)(g+1)$ elements).}\,\,\right\}
\label{eq_pNgfrkTsbgzpRcTs}
\end{equation}
In Figure \ref{fig9}, $f=2$ and $g=1$. 
Motivated by Lemma \ref{lemma:hBbrSzspnrJt}, 
\begin{equation}\left.
\parbox{9.4cm}{a neon tube $\pp$ of $L$ is \emph{used} if
a precipitous edge segment lies in  
 $\LHOT\pp$; in the opposite case, we say that $\pp$ is \emph{unused}.}\,\,\right\}
\label{eq:sZsrjhlfjDtmfNkv}
\end{equation}
A precipitous edge segment comes from a neon tube of an internal lamp $U$. If such an edge segment lies in $\LHOT\pp$, then we say that the internal \emph{lamp $U$ uses $\pp$}.
We claim that 
\begin{equation}
\text{no internal lamp uses three different neon tubes of $J$.}
\label{eq:mnvrBsfnKfSrj}
\end{equation}
For the sake of contradiction, suppose that $\pp_1$, $\pp_2$, and $\pp_3$ are pairwise distinct neon tubes of $J$ and they \revised{all are used by an} internal lamp $U$. It follows from  Lemma \ref{lemma:hBbrSzspnrJt} that, for $i\in\set{1,2,3}$, $\Body U$ lies in $\LHOT{\pp_i}$. Let $m$ be the geometric midpoint of 
a neon tube  $\rr$ of $U$. 
\revised{For $i\in\set{1,2,3}$,
$\LHOT{\pp_i}$ is the union of its left part and its right part; both of these parts are normally bordered territories by \eqref{eq:znksnkhLpktTj}. Hence, using that $\rr$ is precipitous, we obtain that $m$ belongs to the geometric interior of one of these two parts. By left-right symmetry, we can assume that for two values of $i\in\set{1,2,3}$, $m$ belongs to the interior of the \emph{left} part of  $\LHOT{\pp_i}$. 
As each of Figures \ref{fig3}, \ref{fig-16}, \ref{fig17}, \ref{fig-18}, \ref{fig-int}, and \ref{figj12} clearly shows, these two left parts have disjoint interiors. Thus, we have obtained a contradiction that proves}  
\eqref{eq:mnvrBsfnKfSrj}.

As a consequence of  \eqref{eq:mnvrBsfnKfSrj}, we obtain   that the number of used neon tubes of $J$ is at most $2\cdot|\Lamp K|$.
Apply  Lemma \ref{lemma:mgntBlht}  to modify $K$ so that (the isomorphism class of) $\Lamp K$ does not change but
$J$ has at least $4\cdot (1+2 \cdot|\Lamp K|)$ neon tubes. \revised{Let us} list these neon tubes from left to right, according the their positions in the fixed \tbdia-diagram of $K$. We can partition this list \revised{into} $(1+2 \cdot|\Lamp K|)$ many four-element sublists such that each sublist consists of  four consecutive neon tubes of $J$. One of these \revised{sublists} consists of unused neon tubes since otherwise $K$ would have more than 
$2\cdot|\Lamp K|$  used neon tubes. 
Therefore, we can take four consecutive unused neon tubes, $\qq_1$, \dots, $\qq_4$, of $J$; see Figure \ref{figj12}, where (the new version of) $K$ only consists of the pentagon-shaped elements. 

Let $A_1$,\dots,$A_f$  be the left boundary lamps of $M$, listed downwards. Similarly, let $B_1$,\dots,$B_g$  be the right boundary lamps of $M$, listed downwards again; see  Figure \ref{fig9}, where, remember, $f=2$ and $g=1$.

\begin{figure}[ht] \centerline{ \includegraphics[scale=1.0]{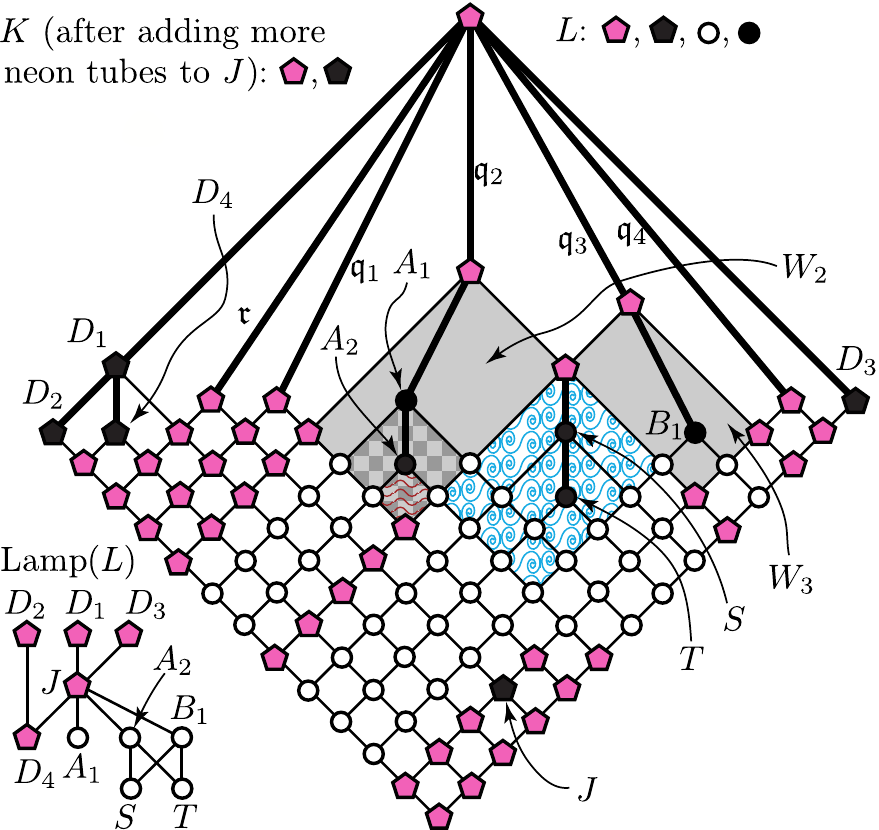}} \caption{The construction for the example given in Figure \ref{fig9}}
\label{figj12}
\end{figure}

For $i\in\set{2,3}$, let $W_i$ stand for the unique 4-cell of $K$ such that $\Peak {W_i}=\Foot{\qq_i}$. In Figure \ref{figj12}, these two 4-cells (as geometric shapes) are grey-filled (but they are not 4-cells in the 
\revised{final sate, $L$}).
Since  $\qq_1$, \dots, $\qq_4$ are unused, $W_2$ and $W_3$ are \tEnl-distributive. Add a lamp with a single neon tube to $W_2$ 
and label this lamp by $A_1$. That is, we insert a fork (that is, a 1-fold multifork) into $W_2$, and $A_1$ is the lamp we obtain  in this way. 
Then there is unique 4-cell $W_2'$ with $\Peak{W_2'}=\Foot{A_1}$. To this 4-cell, which is ``chess-board-filled'' in Figure \ref{figj12}, add a lamp labelled by $A_2$ that has only one neon tube. Continue with the rest of  $A_1,\dots, A_f$ until all of them become lamps ``piled up'' in the rectangular interval (originally a 4-cell) $W_2$. (In Figure \ref{figj12}, $f$ is only $2$. If $f$ was greater than $2$, then we would add $A_3$ to the ``chess-board-and-wave''-filled 4-cell the peak of which is $\Foot {A_2}$.) 
We similarly add lamps $B_1$, \dots, $B_g$ that are piled up in (the geometric area of) $W_3$. Now the $\REnl{A_i}$'s, $i=1,\dots,f$, and the 
 $\LEnl{B_i}$'s, $i=1,\dots,g$, determine an $f$-by-$g$-grid, which is indicated by the \spiralfil-filled normally bordered rectangle in Figure \ref{figj12}. 
Letting this rectangle play the role of $M_0$, we can perform the same multifork extensions as in \eqref{eq_pNgfrkTsbgzpRcTs} to turn the above-mentioned grid to an interval isomorphic to $M$. 
Let $L$ denote the lattice (diagram) we obtain in this way; see Figure \ref{figj12}.
Since the neon tubes $\qq_1$, \dots, $\qq_4$ were unused 
before we began to add the lamps $A_1$,\dots, $A_f$, $B_1$, \dots, $B_g$,  Lemmas \ref{lemma:vltfLm} and \ref{lemma:cVcRtGl} yield in a tedious but straightforward  way that $\Jir{\Con L}\cong \Lamp L\cong Q\jsum j P$, as required. 
\end{proof}

\semmi{
\section*{Statements and declarations}
\subsection*{Data availability statement} Data sharing is not applicable to this article as no datasets were generated or analyzed during the current study.
\subsection*{Competing interests}  Not applicable as there are no interests to report.
}


\begin{thebibliography}{99}

\bibitem{burrissank}
 Burris, S., Sankappanavar, H.P., A course in universal algebra. Graduate Texts in Mathematics, vol. 78, Springer-Verlag, New York-Berlin, 1981, 2012 update of the Millennium Edition,
xvi+276 pp. ISBN: 0-387-90578-2.


\bibitem{CzG:pExttCol} 
    Cz\'edli, G.: Patch extensions and trajectory colorings of slim rectangular lattices.
    Algebra Universalis {72},  125--154  (2014)

\bibitem{CzG:aNote14} 
    Cz\'edli, G.: A note on congruence lattices of slim semimodular lattices, Algebra Universalis 72(3), 225--230 (2014)


\bibitem{CzGdiagrectext} 
    Cz\'edli, G.: 
    Diagrams and rectangular extensions of planar semimodular lattices.
    Algebra Universalis {77}, 443--498  (2017)


\bibitem{CzGlamps} 
 Cz\'edli, G.: 
 Lamps in slim rectangular planar semimodular lattices.
 Acta Sci. Math. (Szeged) {87}, 381--413 (2021) \semmi{(Open access view: 
 \texttt{https://doi.org/10.14232/actasm-021-865-y} or brows \texttt{http://www.acta.hu/})}

\bibitem{CzG-DCElamps} 
 Cz\'edli, G.: 
 Infinitely many new properties of the congruence lattices of slim semimodular lattices.  Acta. Sci. Math. (Szeged), to appear\footnote{{At the time of writing, the most recent versions of some papers are available from the author's website.}}; see also \texttt{http://arxiv.org/abs/2206.14769}.   

\bibitem{CzGpropmeet} 
 Cz\'edli, G.:
 A property of meets in slim semimodular lattices and its application to retracts. Acta Sci. Math. (Szeged), 
\texttt{https://doi.org/10.1007/s44146-022-00040-z}\ \footnote{
{An earlier version: \texttt{http://arxiv.org/abs/2112.07594}} .}


\bibitem{CzGspsabsretracts} 
 Cz\'edli, G.:
Slim patch lattices as absolute retracts and maximal lattices,  \texttt{https://arxiv.org/pdf/2105.12868} 



\bibitem{CzG-lampstwoarx} 
 Cz\'edli, G.: 
Reducing the lengths of slim planar semimodular lattices without changing their congruence lattices. 
\texttt{http://arxiv.org/abs/2301.00401}
\semmi{\footnote{At the time of writing, see \texttt{https://tinyurl.com/czedli-conspsalg} or brows the author's website.}
}

\bibitem{CzGGG3p3c} 
 Cz\'edli, G.,  Gr\"atzer, G.:
 A new property of congruence lattices of slim, planar, semimodular lattices. Categories and General Algebraic Structures with Applications {16}, 1--28 (2022) (Open access: 
\texttt{https://cgasa.sbu.ac.ir/article\textunderscore{}101508.html})


\bibitem{CzGGLakser}
 Cz\'edli, G.,  Gr\"atzer, G., Lakser, H.:
 Congruence structure of planar semimodular lattices: The General Swing Lemma. Algebra Universalis  79:40 (2018)

\bibitem{CzGKA19} 
   Cz\'edli, G., Kurusa, \'A.:
   A convex combinatorial property of compact sets in the plane and its roots in lattice theory. Categories and General Algebraic Structures with Applications 11, 57--92 (2019)\quad
\texttt{http://cgasa.sbu.ac.ir/article\textunderscore{}82639.html}

\bibitem{CzGMakay} 
   Cz\'edli, G., Makay, G.:
   Swing lattice game and a direct proof of the swing lemma for planar semimodular lattices.
    Acta Sci. Math. (Szeged) {83}, 13--29 (2017)


\bibitem{CzGSch-howtderiv} 
   Cz\'edli, G., Schmidt, E.\,T.:
   How to derive finite semimodular lattices from distributive lattices? 
   Acta Math. Hungar. \tbf{121}, 277--282 (2008)

\bibitem{CzGSch-jordanh} 
   Cz\'edli, G., Schmidt, E.\,T.:
   The Jordan-H\"older theorem with uniqueness for groups and semimodular lattices. 
   Algebra Universalis    {66}, 69--79 (2011)

\bibitem{CzgScht-a-visual}
Cz\'edli, G., Schmidt, E.\,T.: Slim semimodular lattices. I. A visual approach. Order 29, 481--497 (2012)

\bibitem{CzGSchT--patchwork}  
   Cz\'edli, G., Schmidt, E.\,T.:
   Slim semimodular lattices. II. A description by patchwork systems. 
   ORDER {30}, 689--721 (2013)



\bibitem{r:Gr-LTFound} 
   Gr\"atzer, G.: 
   Lattice Theory: Foundation.
   Birkh\"auser, Basel (2011)


\bibitem{gG15}  
 Gr\"atzer, G.: 
 Congruences in slim, planar, semimodular lattices: The Swing Lemma. 
 Acta Sci. Math. (Szeged) {81}, 381--397 (2015)


\bibitem{gGCFL2}
Gr\"atzer, G.:
The Congruences of a Finite Lattice, A {Proof-by-Picture} Approach,
second edition.
Birkh\"auser, 2016. xxxii+347. Part I is accessible at\\
\texttt{https://www.researchgate.net/publication/299594715} 

\bibitem{gG-cong-fork-ext}
    Gr\"atzer, G.:
    Congruences of fork extensions of slim, planar, semimodular lattices.
     Algebra Universalis {76}, 139--154 (2016)

\bibitem{gG-VIII-conSPS} 
    Gr\"atzer, G.:
    Notes on planar semimodular lattices. VIII. Congruence lattices of SPS lattices. 
   Algebra Universalis {81} (2020),  Paper No. 15, 3 pp.

\bibitem{GKn-I} 
   Gr\"atzer, G., Knapp, E.:
   Notes on planar semimodular lattices. I.  Construction. 
   Acta Sci.\ Math.\ (Szeged) {73}, 445--462 (2007)
   

\bibitem{GKnapp-III}
G. Gr\"atzer and E. Knapp, 
{Notes on planar semimodular lattices. III. Rectangular lattices.}
Acta Sci. Math. (Szeged) {75} (2009), 29--48.


\bibitem{Holder}
H\"older, O.: Zur\"uckf\"uhrung einer beliebigen algebraischen Gleichung auf eine Kette
von Gleichungen. Math. Ann. 34, 26--56 (1889)
 

\bibitem{Jordan}
 Jordan, C.: Trait\'e des substitutions et des \'equations algebraique. Gauthier-Villars
(1870)

   
\bibitem{KR75} 
  Kelly, D., Rival, I.: 
  Planar lattices. 
  Canad. J. Math. {27}, 636--665 (1975)
  
\end{thebibliography}
\end{document}